\documentclass[11pt,a4paper]{amsart}		

\usepackage[utf8]{inputenc}					

\usepackage{lmodern}						
\usepackage[english]{babel}					


\usepackage{amsmath,amsfonts,amssymb,amsthm}
\usepackage{tikz-cd}

\usepackage[left=3cm,right=3cm,top=3cm,bottom=3cm]{geometry}

\usepackage{mathtools}						

\DeclareMathAlphabet{\mathbbold}{U}{bbold}{m}{n}	


\theoremstyle{plain}
\newtheorem{thm-intro}{Theorem}
\newtheorem{cor-intro}[thm-intro]{Corollary}

\newtheorem*{theorem*}{Theorem}
\newtheorem{theorem}{Theorem}[section]
\newtheorem{prop}[theorem]{Proposition}

\newtheorem{lemma}[theorem]{Lemma}
\theoremstyle{definition}
\newtheorem{definition}[theorem]{Definition}
\theoremstyle{remark}
\newtheorem{rmk}{Remark}[section]
\newtheorem{example}[rmk]{Example}

\DeclareMathOperator{\spn}{\mathrm{span}}
\DeclareMathOperator{\rank}{\mathrm{rank}}
\DeclareMathOperator{\tr}{\mathrm{Tr}}
		
\DeclareMathOperator{\supp}{supp}

\DeclareMathOperator{\Hess}{Hess}
\DeclareMathOperator{\Ric}{Ric}
\DeclareMathOperator{\re}{Re}
\DeclareMathOperator{\im}{Im}

\newcommand{\N}{\mathbb{N}}					
\newcommand{\R}{\mathbb{R}}					
\newcommand{\distr}{\mathcal{D}}			
\renewcommand{\epsilon}{\varepsilon}		
\newcommand{\dive}{\mathrm{div}}			

\renewcommand{\star}{\mathbf{H}}

\newcommand{\vol}{\mathrm{vol}}
\newcommand{\pot}{V}
\newcommand{\op}{H}
\newcommand{\Veff}{V_{\mathrm{eff}}}
\newcommand{\loc}{\mathrm{loc}}
\newcommand{\comp}{\mathrm{comp}}
\newcommand{\dom}{D}

\usepackage{hyperref}
\usepackage{xcolor}
\hypersetup{
    colorlinks,
    linkcolor={red!50!black},
    citecolor={blue!50!black},
    urlcolor={blue!80!black}
}

\title{Quantum confinement on non-complete Riemannian manifolds}

\author[Dario Prandi]{Dario Prandi$^\flat$}
\address{$^\flat$ CNRS, Laboratoire des Signaux et Systèmes, CentraleSup\'elec, Gif-sur-Yvette}
\email{\href{mailto:dario.prandi@l2s.centralesupelec.fr}{dario.prandi@l2s.centralesupelec.fr}}

\author[Luca Rizzi]{Luca Rizzi$^\sharp$}
\address{$^\sharp$ Univ. Grenoble Alpes, IF, F-38000 Grenoble, France \newline 
CNRS, IF, F-38000 Grenoble, France (current institution) \newline 
Inria, team GECO \& CMAP, \'Ecole Polytechnique, CNRS, Universit\'e Paris-Saclay, Palaiseau, France (past institution)}
\email{\href{mailto:luca.rizzi@univ-grenoble-alpes.fr}{luca.rizzi@univ-grenoble-alpes.fr}}

\author[Marcello Seri]{Marcello Seri$^\dagger$}
\address{$^\dagger$ Department of Mathematics and Statistics, University of Reading, Reading, UK}
\email{\href{mailto:m.seri@ucl.ac.uk}{m.seri@ucl.ac.uk}}

\subjclass[2010]{Primary: 47B25, 35J10, 53C21, 58J99; Secondary: 35Q40, 81Q10}
 
\begin{document}

\begin{abstract}
We consider the quantum completeness problem, i.e.\ the problem of confining quantum particles, on a non-complete Riemannian manifold $M$ equipped with a smooth measure $\omega$, possibly degenerate or singular near the metric boundary of $M$, and in presence of a real-valued potential $V\in L^2_\loc(M)$.
The main merit of this paper is the identification of an intrinsic quantity, the effective potential $\Veff$, which allows to formulate simple criteria for quantum confinement. Let $\delta$ be the distance from the possibly non-compact metric boundary of $M$. A simplified version of the main result guarantees quantum completeness if $V\ge -c\delta^2$ far from the metric boundary and 
\[
	\Veff+V\ge \frac3{4\delta^2}-\frac{\kappa}{\delta}, \qquad \text{close to the metric boundary}.
\]
These criteria allow us to: (i) obtain quantum confinement results for measures with degeneracies or singularities near the metric boundary of $M$; (ii) generalize the Kalf-Walter-Schmincke-Simon Theorem for strongly singular potentials to the Riemannian setting for any dimension of the singularity; (iii) give the first, to our knowledge, curvature-based criteria for self-adjointness of the Laplace-Beltrami operator; (iv) prove, under mild regularity assumptions, that the Laplace-Beltrami operator in almost-Riemannian geometry is essentially self-adjoint, partially settling a conjecture formulated in \cite{BL-LaplaceBeltrami}.
\end{abstract}

\maketitle

\setcounter{tocdepth}{1}
\tableofcontents 

\section{Introduction}

Let $(M,g)$ be a smooth Riemannian manifold of dimension $n\geq 1$, equipped with a smooth measure $\omega$. That is, $\omega$ is defined by a smooth, positive density, not necessarily the Riemannian one. Given a real-valued potential $V\in L^2_\loc(M)$, the evolution of a quantum particle is described by a wave function $\psi\in L^2(M)$, obeying the Schr\"odinger equation:
\begin{equation}\label{eq:schr}
	i\partial_t \psi = H \psi,
\end{equation}
where $H$ is the operator on $L^2(M)$ defined by,
\begin{equation}
	H = -\Delta_\omega + V, \qquad \dom(H) = C^\infty_c(M).
\end{equation}
Here, $\Delta_\omega = \dive_\omega \circ \nabla$ is the weighted Laplace-Beltrami on functions, computed with respect to the measure $\omega$. When $\omega = \vol_g$ is the Riemannian volume, then $\Delta_\omega = \Delta$ is the classical Laplace-Beltrami operator.

The operator $H$ is symmetric and densely defined on $L^2(M)$. The problem of finding its self-adjoint extensions has a long and venerable history, dating back to Weyl at the beginning of the 20th century. From the mathematical viewpoint, by Stone Theorem, any self-adjoint extension of $H$ generates a strongly continuous unitary semi-group on $L^2(M)$, which produces solutions of \eqref{eq:schr}, starting from a given initial condition $\psi_0 \in L^2(M)$. When multiple self-adjoint extensions are available, such an evolution is no longer unique. Concretely, when $M \subset \R^n$ is a bounded region of the Euclidean space, different self-adjoint extensions correspond to different boundary conditions. For example, one can have repulsion or reflection, up to a complex phase, at $\partial M$, leading to different physical evolutions.

On the other hand, when $H$ is essentially self-adjoint, that is, it admits a unique self-adjoint extension, there is no need to fix any boundary condition, nor to precisely describe the domain of the extension. The physical interpretation of this fact is that quantum particles, evolving according to \eqref{eq:schr}, are naturally confined to $M$. For this reason, the essential self-adjointness of $H$ is referred to as \emph{quantum completeness} or \emph{quantum confinement}.

For geodesically complete Riemannian manifolds, there is a well developed theory, giving sufficient conditions on the potential $V$ to ensure quantum completeness. In particular, when $V \geq 0$, then $H$ is essentially self-adjoint. We refer to the excellent \cite{BMS-manifolds}, which contains almost all results on the essential self-adjointness of Schr\"odinger-type operators on vector bundles over complete Riemannian manifolds.

Less understood is the case of non-complete Riemannian manifolds, that is, when geodesics (representing trajectories of classical particles) can escape any compact set in finite time. For bounded domains in $\R^n$, this problem has been thoroughly discussed in \cite{Nenciu2008}, giving refined conditions on the potential for the essential self-adjointness of $H = - \Delta + V$, where $\Delta$ is Euclidean Laplacian. The recent work \cite{MilatovicTruc} contains also quantum completeness results for Schr\"odinger type operators on vector bundles over open subsets of Riemannian manifolds, under strong assumptions on the potential at the metric boundary. Related results, for a magnetic Laplacians and no external potential, can be found in \cite{NenciuDisk} (for the Euclidean unit disk), and in \cite{DeVerdiere2009a} (for bounded domains in $\R^n$ and some Riemannian structures). Finally, we mention \cite{Masamune2005}, where conditions for quantum completeness of the Laplace-Beltrami operator on a non-complete Riemannian manifold are given in terms of the capacity of the metric boundary.

We stress that, in all the above cases, the explosion of the potential $V$ or the magnetic field close to the metric boundary plays an essential role. An interesting fact is that \emph{even in absence of external potential or magnetic fields}, the Laplace-Beltrami operator on a non-complete Riemannian manifold can be essentially self-adjoint, leading to purely geometric confinement. Let us discuss a simple example, the \emph{Grushin metric},
\begin{equation}
g = dx\otimes dx + \frac{1}{x^2} dy \otimes dy, \qquad \text{on }M= \R^2 \setminus \{x=0\}.
\end{equation}
This metric is not geodesically complete, as almost all geodesics starting from $M$ cross the singular region $\mathcal{Z} = \{x=0\}$ in finite time. The only exception is given by the negligible set of geodesics pointing directly away from $\mathcal{Z}$ with initial speed $\mathrm{sgn}(x) \partial_x$. Observe that the Riemannian measure $\vol_g = \frac{1}{|x|} dx dy$ explodes close to $\mathcal{Z}$. The corresponding Laplace-Beltrami operator is
\begin{equation}
\Delta = \partial_x^2 + x^2 \partial_y^2 - \frac{1}{x} \partial_x.
\end{equation}
This is a particular instance of almost-Riemannian structure (ARS). It is not hard to show that $\Delta$, with domain $C^\infty_c(\R^2\setminus\mathcal{Z})$, is essentially self-adjoint.

In \cite{BL-LaplaceBeltrami}, it is proved that the Laplace-Beltrami operator for $2$-dimensional, compact, orientable almost-Riemannian strctures (ARS), defined on the complement of the singular region, is essentially self-adjoint. The confinement of quantum particles on these structures is surprising, and in sharp contrast with the behaviour of classical ones which, following geodesics, almost always cross the singular region. It was thus conjectured that the Laplace-Beltrami is essentially self-adjoint for all ARS, of any dimension. Unfortunately, since the techniques used in \cite{BL-LaplaceBeltrami} are based on normal forms for ARS, which are not available in higher dimension, different tools are required to attack the general case.

Motivated by this problem, we investigate the essential self-adjointness of $H$ on non-complete Riemannian structures, with a particular emphasis on the connection with the underlying geometry. Our setting allows to treat in an unified manner many classes of non-complete structures, including, most importantly, those whose metric completion is \emph{not} a smooth Riemannian manifold (such as ARS), or not even a topological manifold (such as cones). In this general setting, we are able to apply and extend some techniques inspired by \cite{Nenciu2008,DeVerdiere2009a}, based on Agmon-type estimates and Hardy inequality, to yield sufficient conditions for self-adjointness. We remark that very recently, in \cite{NN-stoc}, the aforementioned techniques have been combined with the so-called Lioville property to prove sufficient conditions for stochastic (and quantum) confinement of drift-diffusion operators on domains of $\R^n$. 
An interesting perspective would then be to obtain geometric criteria for stochastic confinement on non-complete Riemannian manifolds, by combining these methods with the ones in this paper.

Since we are interested in conditions for purely geometrical confinement, the main thrust of the paper is the case $V \equiv 0$. Nevertheless, for completeness, we included the external potential in our main statement, even though this leads to some technicalities. The main novelty of our approach is the identification of an intrinsic function -- depending only on $(M,g)$ and the measure $\omega$ -- which we call the \emph{effective potential}:
\begin{equation}
\Veff = \left(\frac{\Delta_\omega \delta}{2}\right)^2 + \left(\frac{\Delta_\omega \delta}{2}\right)^\prime,
\end{equation}
where $\delta$ denotes the distance from the metric boundary, and the prime denotes the normal derivative. Under appropriate conditions on $\Veff$ -- typically, a sufficiently fast blow-up at the metric boundary -- one can infer the essential self-adjointness of $H$ even in absence of any external potential (see Section~\ref{s:main}).

We observe that the explosion of the measure $\omega$ close to the metric boundary (as it happens for the Grushin metric), is \emph{not} a necessary condition for essential self-adjointness of $\Delta_\omega$. Indeed, the formula for $\Veff$ shows that not only the explosion of $\omega$, but also of its first and second derivatives, plays a role in the confinement. In particular, one can attain quantum completeness in presence of measures that vanish sufficiently fast close to the metric boundary. This is the topic of Section~\ref{s:degorder}, in the framework of quantum completeness induced by singular or degenerate measures.

Another application of our main result, this time in presence of an external potential $\pot$, is the generalization of the Kalf-Walter-Schmincke-Simon Theorem for strongly singular potentials to the Riemannian setting for any dimension of the singularity. This is studied in Section~\ref{s:singular}, and extends the results of \cite{Brusentsev}, obtained in the Euclidean setting, and of \cite{DonnelyGarofalo97}, for point-like singularities on Riemannian manifolds. See also the recent work \cite{Nenciu-Deficiency}, where a particular emphasis is put on the study of deficiency indices in the Euclidean setting.

Recall that, if $\omega = \vol_g$ is the Riemannian measure, then $\Delta_\omega \delta$ is proportional to the mean curvature of the level sets of the distance from the metric boundary $\delta$. Hence, the very existence of the above formula for $\Veff$ sheds new light on the relation between curvature and essential self-adjointness. In particular, via Riccati comparison techniques, this connection leads to the first, to our knowledge, curvature-based criteria for quantum completeness (see Section~\ref{s:curvature}).

Finally, and most important, in Section~\ref{s:arg} we prove that our machinery can be applied to the almost-Riemannian setting. Then, under mild assumptions on the underlying geometry, we settle the almost-Riemannian part of the Boscain-Laurent conjecture, proving that the Laplace-Beltrami operator is essentially self-adjoint for regular ARS. We then discuss the non-regular case, describing the limitation of our techniques and exhibiting examples of ARS where we are not able to infer the essential self-adjointness of the Laplace-Beltrami.

In the remainder of the section we provide a panoramic view of the main results.

\subsection{Assumption on the metric structure}

In order to describe precisely the behavior of $H$ near the ``escape points'' of $M$, we need an assumption on the metric structure $(M,d)$ induced by the Riemannian metric $g$. For this purpose, we let $(\hat M, \hat d)$ be the metric completion of $(M,d)$ and $\partial\hat M:=\hat M\setminus M$ be the \emph{metric boundary}. The \emph{distance from the metric boundary} $\delta:M\to [0,+\infty)$ is then 
\begin{equation}
	\delta(p) := \inf\left\{ \hat d(p,q)\mid q\in \partial \hat M \right\}.
\end{equation}
We assume the following.
\begin{itemize}
\item[$(\star)$] There exists $\varepsilon >0$ such that $\delta$ is $C^2$ on $M_\varepsilon := \{ 0<\delta \leq \varepsilon\}$.
\end{itemize}
Under this assumption, as shown in Lemma~\ref{l:dist}, there exists a $C^1$-diffeomorphism $M_\varepsilon \simeq (0,\varepsilon] \times X_\varepsilon$, where $X_\varepsilon=\{\delta=\varepsilon\}$ is a $C^2$ embedded hypersurface, such that $\delta(t,x)=t$.

Assumption $(\star)$ is verified when $M=N \setminus \mathcal{Z}$, where $N$ is a smooth manifold, $\mathcal{Z} \subset N$ is a $C^2$ submanifold of arbitrary dimension, and $g$, $\omega$ are possibly singular on $\mathcal{Z}$. As already mentioned, $(\star)$ holds in more general situations, in which the metric completion $\hat M$ need not be a Riemannian manifold (e.g.\ to ARS), or even a topological manifold (e.g.\ to cones).

\subsection{Effective potential and main result}

Here and thereafter, for any function $f :M \to \R$, the symbol $f^\prime$ represents the normal derivative with respect to the metric boundary, that is the derivative in the direction $\nabla\delta$:
\begin{equation}
f' := df (\nabla\delta) = g(\nabla\delta,\nabla f).
\end{equation}

We start by introducing the main object of interest of the paper, which allows to characterize the effect of the metric boundary on the self-adjointness of $H$ taking into account the interaction of the Riemannian structure with the measure. 
\begin{definition}
The \emph{effective potential} $\Veff : M_\varepsilon \to \R$ is the continuous function\footnote{The fact that $\Veff$ is well defined and continuous is proven in Lemma~\ref{l:dist} and Proposition~\ref{l:pot}.},
\begin{equation}\label{eq:potential}
\Veff:=\left(\frac{\Delta_\omega \delta}{2}\right)^2 + \left(\frac{\Delta_\omega \delta}{2}\right)^\prime.
\end{equation}
\end{definition}

The main result of the paper is the following criterion for essential self-adjointness of $H$. Standard choices for the function $\nu$ appearing in its statement are, e.g., the distance $\delta$ from the metric boundary, or the Riemannian distance $d(p,\cdot)$ from a fixed point $p\in M$.

\begin{thm-intro}[Main quantum completeness criterion]\label{t:main-intro}
Let $(M,g)$ be a Riemannian manifold satisfying $(\star)$ for $\varepsilon >0$. Let $\pot \in L^2_\loc(M)$. Assume that there exist $\kappa \geq 0$ and a Lipschitz function $\nu : M \to \R$ such that, close to the metric boundary, 
	\begin{equation}\label{eq:hypo-1-intro}
\Veff + \pot  \geq \frac{3}{4\delta^2}-\frac{\kappa}{\delta} -\nu^2, \qquad  \text{for $\delta \leq \varepsilon$}.
\end{equation}
Moreover, assume that there exist $\varepsilon' < \varepsilon$, such that,
\begin{equation}\label{eq:hypo-2-intro}
\pot  \geq - \nu^2, \qquad \text{for $\delta > \varepsilon'$}.
\end{equation}	
Then, $H=-\Delta_\omega+\pot$ with domain $C^\infty_c(M)$ is essentially self-adjoint in $L^2(M)$.

Finally, if $\hat{M}$ is compact, the unique self-adjoint extension of $H$ has compact resolvent. Therefore, its spectrum is discrete and consists of eigenvalues with finite multiplicity.
\end{thm-intro}
The very existence of the intrinsic formula \eqref{eq:potential} for the effective potential $\Veff$, providing a direct link between geometry and self-adjointness properties, is one of the most interesting results of this paper. Some remarks about $\Veff$ are in order.
\begin{rmk} By the generalized Bochner formula \cite[Eqs. 14.28, 14.46]{VillaniOldandNew}, we have
\begin{equation}\label{eq:potential2-intro}
\Veff=\frac{1}{4}\left( (\Delta_\omega \delta)^2 - 2 \|\Hess( \delta)\|_{\mathrm{HS}}^2 - 2 \Ric_\omega(\nabla\delta, \nabla\delta)\right),
\end{equation}
where, if $\omega = e^{-f} \vol_g$, then $\Ric_\omega:= \mathrm{Ric} + \Hess(f)$ is the Bakry-Emery Ricci tensor and $\| \cdot \|_{\mathrm{HS}}$ denotes the Hilbert-Schmidt norm. If $\omega = \vol_g$, the Bakry-Emery tensor is the standard Ricci curvature and $\Delta_{\vol_g} = \Delta$ is the Laplace-Beltrami operator. In this case, $\Veff$ is a function of the mean curvature $m=\Delta \delta$ of the level sets of $\delta$.
\end{rmk}

Since, in our view, the main interest of the paper is the case $V \equiv 0$, we point out the following immediate corollary of Theorem~\ref{t:main-intro}.

\begin{cor-intro}\label{c:main-nopot-intro}
	Let $(M,g)$ be a Riemannian manifold satisfying $(\star)$ for $\varepsilon >0$. Assume that there exist $\kappa \geq 0$ such that, 
	\begin{equation}
\Veff  \geq \frac{3}{4\delta^2}-\frac{\kappa}{\delta}, \qquad  \text{for $\delta \leq \varepsilon$}.
\end{equation}
Then, $\Delta_\omega$ with domain $C^\infty_c(M)$ is essentially self-adjoint in $L^2(M)$.
\end{cor-intro}

\subsection{Measure confinement}

The condition of Corollary~\ref{c:main-nopot-intro} reflects on the measure $\omega$ in a natural way, as discussed in Section~\ref{s:degorder}. Moreover this condition is sharp for measures with power behavior near the metric boundary, as shown in the following. Here, we identify $M_\varepsilon \simeq (0,\varepsilon] \times X_\varepsilon$, and denote points of $M$ as $p=(t,x)$, with $x \in X_\varepsilon$.

\begin{thm-intro}[Pure measure confinement]\label{t:degorder-intro}
Assume that the Riemannian manifold $(M,g)$ satisfies $(\star)$ for $\varepsilon >0$. Moreover, let $\omega$ be a smooth measure such that there exists $a\in\R$ and a reference measure $\mu$ on $X_\varepsilon$ for which
	\begin{equation}\label{eq:porwer-ord-intro}
		d\omega(t,x) = t^a\,dt\,d\mu(x),\qquad (t,x)\in(0,\varepsilon] \times X_\varepsilon.
	\end{equation}	
	Then, $\Delta_\omega$ with domain $C^\infty_c(M)$ is essentially self-adjoint in $L^2(M)$ if $a\ge 3$ or $a\le -1$.
\end{thm-intro}

The preceding result can be directly applied, choosing $\omega=\text{vol}_g$, to \emph{conic or anti-conic-type structures}. These are Riemannian structures that satisfy $(\star)$ for some $\varepsilon>0$ and such that their metric, under the identification $M_\varepsilon \simeq (0,\varepsilon] \times X_\varepsilon$, can be written as
	\begin{equation}\label{eq:coni}
		g|_{M_\varepsilon} = dt \otimes dt + t^{2\alpha}\,h, \qquad \alpha\in\R, 
	\end{equation}
where $h$ is some Riemannian metric on $X_\varepsilon$.

The above structures are \emph{cones} when $\alpha=1$ (see, e.g., \cite{MR530173}), \emph{metric horns} when $\alpha > 1$ (see \cite{Horns}) and \emph{anti-cones} when $\alpha< 0$ (see \cite{anticonic}).
For $n=2$ and $M = \R\times \mathbb \mathbb{S}^1$, the corresponding embedding in $\R^3$ for $\alpha\ge 1$ or $\alpha = 0$ are shown in Figure~\ref{fig:coni}.
For $-\alpha \in \mathbb N$ these structures are almost-Riemannian, see Section~\ref{s:arg}.

The measure of these structures is of the form \eqref{eq:porwer-ord-intro}, with $a= (n-1)\alpha$, hence we have the following generalization of a result in \cite{anticonic}.
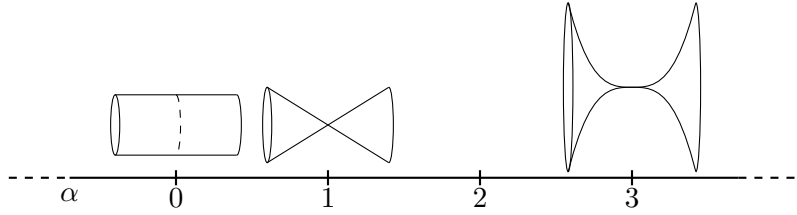
\begin{figure}
\begin{tikzpicture}
\draw[thick] (-1.4,0) -> (7.4,0);
\draw[thick, dashed] (-2.2,0) -> (-1.4,0);
\draw[thick, dashed] (7.4,0) -> (8.2,0);
\foreach \x in {3,2,1,0}
	\draw[thick] (2*\x,-.1) -> (2*\x,.1);

\node[below=.1] at (0,0) {$0$};
\node[below=.1] at (2,0) {$1$};
\node[below=.1] at (4,0) {$2$};	
\node[below=.1] at (6,0) {$3$};
\node[below=.1] at (-1.4,0) {$\alpha$};

\begin{scope}[shift = {(6,1.2)}, scale = .7, xscale=.6]
	\draw (-2,0) circle[x radius=.15, y radius = 1.6] ;
	\draw (2,1.6) arc[start angle = 90, end angle = -90, x radius=.15, y radius = 1.6] ;

	\draw plot[domain = -2:2] (\x,.2*\x^3); 
	\draw plot[domain = -2:2] (\x,-.2*\x^3);
\end{scope}

\begin{scope}[shift = {(2,.7)}, scale = .5, xscale = .8]
	\draw (-2,1) -> (2,-1);
	\draw (-2,-1) -> (2,1);

	\draw (-2,0) circle[x radius=.15, y radius = 1] ;
	\draw (2,1) arc[start angle = 90, end angle = -90, x radius=.15, y radius = 1] ;
\end{scope}

\begin{scope}[shift = {(0,.7)}, scale = .4, xscale = 1]
	\draw (-2,1) -> (2,1);
	\draw (-2,-1) -> (2,-1);

	\draw (-2,0) circle[x radius=.15, y radius = 1] ;
	\draw (2,1) arc[start angle = 90, end angle = -90, x radius=.15, y radius = 1] ;
	\draw[dashed] (0,1) arc[start angle = 90, end angle = -90, x radius=.15, y radius = 1] ;
\end{scope}
\end{tikzpicture}
	\caption{Depiction of the embeddings in $\R^3$ of the 2-dimensional structures on $\R\times \mathbb{T}^1$ with metric $g = dt^2+t^\alpha d\theta^2$, $\alpha\ge0$.} 
	\label{fig:coni}
\end{figure}

\begin{cor-intro}\label{c:coni}
	Consider a conic or anti-conic-type structure as in \eqref{eq:coni}.
	Then, $\Delta = \Delta_{\vol_g}$ is essentially self-adjoint in $L^2(M)$ if $\alpha\ge\tfrac3{n-1}$ or $\alpha\le-\tfrac{1}{n-1}$.
\end{cor-intro}

\begin{rmk}
The bounds of Theorem~\ref{t:degorder-intro} and Corollary~\ref{c:coni} are sharp. Indeed, the Laplace-Beltrami operator $-\Delta$ on $M = (0,+\infty) \times \mathbb{S}^1$ given by the global metric
\begin{equation}
g = dt \otimes dt + t^{2\alpha} d\theta \otimes d\theta,
\end{equation}
is essentially self-adjoint if and only if $\alpha \in (-\infty,-1]\cup [3,\infty)$. The proof of the ``only if'' part of this statement relies on the explicit knowledge of the symmetric solutions of $(-\Delta^* - \lambda) u = 0$ for this metric, and can be found, for example, in \cite{anticonic}.
\end{rmk}

\subsection{Strongly singular potentials}

A well known and classical result by Kalf-Walter-Schmincke-Simon \cite{Simon-SSingular} (see also \cite[Thm. X.30]{MR0493420}) states that, if $\pot = \pot_1 +\pot_2$ with $\pot_2 \in L^\infty(\R^n)$ and $\pot_1 \in L^2_\loc(\R^n \setminus \{0\})$ obeying
\begin{equation}
\pot_1 (z) \geq -\frac{n(n-4)}{4|z|^2},
\end{equation}
then $-\Delta + \pot$ is essentially self-adjoint on $C^\infty_c(\R^n  \setminus \{0\})$. The above theorem, in particular, implies that, starting from dimension $n \geq 4$, points are ``invisible'' from the point of view of a free quantum particle living in $\R^n$, i.e., with $V \equiv 0$.

This result has been generalized to the case of potentials singular along affine hypersurfaces of $\R^n$ in \cite{Maeda}, and for singularities along well-separated submanifold of $\R^n$ in \cite[Thm. 6.2]{Brusentsev}. In the Riemannian setting, to our best knowledge, the only result so far is \cite{DonnelyGarofalo97}, by Donnelly and Garofalo, for point-like singularities. See also \cite[Thm. 3]{MilatovicTruc}, where the authors obtain similar results for general differential operators on Hermitian vector bundles under assumptions implying $V\ge -c$ (that is, not strongly singular).

The method of effective potentials developed in this paper allows to obtain a generalization of the Kalf-Walter-Schmincke-Simon Theorem for potentials singular along arbitrary dimension submanifolds of complete Riemannian manifolds, proved in Section~\ref{s:singular}. We stress that, in the case of points -- i.e.\ dimension $0$ singularities -- condition \eqref{eq:conditionsingular-intro} is strictly weaker than the one in \cite[Thm. 2.5]{DonnelyGarofalo97}, allowing a stronger singularity of the potential.

\begin{thm-intro}[Kalf-Walter-Schmincke-Simon for Riemannian submanifolds]\label{t:quattro-autori-intro}
Let $(N,g)$ be a $n$-dimensional, complete Riemannian manifold. Let $\mathcal{Z}_i \subset N$, with $i \in I$, be a finite collection of embedded, compact $C^2$ submanifolds of dimension $k_i$ and denote by $d(\cdot,\mathcal{Z}_i)$ the Riemannian distance from $\mathcal{Z}_i$. Let $\pot \in L^2_\loc(N \setminus \mathcal{Z}_i)$ be a \emph{strongly singular potential}. That is, there exists $\varepsilon>0$ and a non-negative Lipschitz function $\nu : N \to \R$, such that,
\begin{enumerate}
	\item[(i)]  for all $i\in I$ and $p \in N$ such that $0< d(p,\mathcal{Z}_i)\leq \varepsilon$, we have
	\begin{equation}\label{eq:conditionsingular-intro}
		\pot(p) \geq -\frac{(n-k_i)(n-k_i-4)}{4 d(p,\mathcal{Z}_i)^2} - \frac{\kappa}{d(p,\mathcal{Z}_i)}- \nu(p)^2, \qquad \kappa\geq 0;
	\end{equation}
	\item[(ii)] for all $p \in N$ such that $d(p,\mathcal{Z}_i) \geq \varepsilon$ for all $i \in I$, we have
	\begin{equation}
		\pot(p) \geq - \nu(p)^2.
	\end{equation}
\end{enumerate}
Then, the operator $H = -\Delta + \pot$ with domain $C^\infty_c(M)$ is essentially self-adjoint in $L^2(M)$, where $M = N \setminus \bigcup_{i} \mathcal{Z}_i$, or any one of its connected components.
\end{thm-intro}
As a consequence of Theorem~\ref{t:quattro-autori-intro}, any submanifold of codimension $n-k \geq 4$ is ``invisible'' from the point of view of free quantum particles living on $N$ i.e., with $V \equiv 0$. This result is also sharp, in fact one can show that, if $n-k < 4$, the Laplace-Beltrami $H=-\Delta$ with domain $C^\infty_c(M)$ is not essentially-self adjoint.

\begin{rmk}
	Theorem~\ref{t:quattro-autori-intro} can be easily generalized to accommodate a countable number of singularities, under the assumption
	\begin{equation}
		\inf_{i\neq j} d(\mathcal Z_i,\mathcal Z_j) >0.
	\end{equation}
	Moreover, the compactness of the singularities can be removed, provided that the non-complete manifolds $N\setminus \mathcal Z_i$ satisfy $(\star)$ for each $i\in I$ and some fixed $\varepsilon>0$.
\end{rmk}

\subsection{Curvature-based criteria for self-adjointness}

In this section, we fix $\omega = \vol_g$, and investigate how the curvature of $(M,g)$ is related with the essential self-adjointness of the Laplace-Beltrami operator $\Delta = \Delta_{\vol_g}$. A crucial observation is that sectional curvature is not the only actor. This can be easily observed by considering, e.g., conic and anti-conic-type structures given by \eqref{eq:coni}. In this case, for all planes $\sigma$ containing $\nabla \delta$,
\begin{equation}\label{eq:sectionalsymmetry-intro}
	\mathrm{Sec}(\sigma) = -\frac{\alpha(\alpha-1)}{\delta^2}, \qquad \delta \leq \varepsilon,
\end{equation}
and Corollary~\ref{c:coni} implies the existence of non-self-adjoint and self-adjoint structures with exactly the same sectional curvature (e.g., take $n=2$ and $\alpha=2$ and $\alpha =-1$, respectively).

It turns out that the essential self-adjointness property of $\Delta$ is influenced also by the principal curvatures of the $C^2$ level sets $X_t=\{\delta=t\}$, $0<t\leq \varepsilon$, that is the eigenvalues of the \emph{second fundamental form}\footnote{Recall that the second fundamental form (or shape operator) of an hypersurface is well defined up to a sign, depending on the choice of the unit normal vector. In our case, the normal vector to $X_t$ is $-\nabla \delta$.} $H(t)$ of $X_t$, describing its extrinsic curvature:
\begin{equation}
H(t) := \mathrm{Hess}(\delta)|_{X_t}.
\end{equation}
Here, the $(2,0)$ symmetric tensor $\mathrm{Hess}(\delta)$ is the Riemannian Hessian. Straightforward computations show that, in the conic and anti-conic-type of structures, we have
\begin{equation}
	H(t)= \frac{\alpha}{t} g.
\end{equation}
This breaks the symmetry observed for the sectional curvatures in \eqref{eq:sectionalsymmetry-intro}, allowing to control the essential self-adjointness (e.g., as already mentioned, for $n=2$, the case $\alpha=-1$ is essentially self-adjoint, the case $\alpha =2$ is not).

In Section~\ref{s:curvature}, Theorems~\ref{t:curvosc} and \ref{t:curvoscsuper}, we prove two criteria for essential self-adjointness of the Laplace-Beltrami operator, under bounds on the sectional curvature near the metric boundary and the principal curvatures of $X_\varepsilon$. 
In particular, we allow for wild oscillations of the sectional curvature. For simplicity, we hereby present a unified version of these results, without explicit values of the constants.

\begin{thm-intro}\label{t:curv-intro}
Let $(M,g)$ be a Riemannian manifold satisfying $(\star)$ for $\varepsilon>0$.
Assume that there exist $c_1 \geq c_2 \geq 0$ and $r \geq 2$ such that, for all planes $\sigma$ containing $\nabla \delta$, one has
\begin{equation}\label{eq:bound-curv-sec-intro}
-\frac{c_1}{\delta^r}\leq  \mathrm{Sec}(\sigma) \leq -\frac{c_2}{\delta^r}, \qquad \delta \leq \varepsilon.
\end{equation}
Then, there exist a region $\Sigma(n,r) \subset \R^2$, and a constant $h_\varepsilon^*(c_2,r)>0$ such that, if $(c_1,c_2) \in \Sigma(n,r)$ and if the principal curvatures of the hypersurface $X_\varepsilon=\{\delta = \varepsilon\}$ satisfy
\begin{equation}\label{eq:bound-curv-princ-intro}
	H(\varepsilon) < h_\varepsilon^*(c_2,r),
\end{equation}
then $\Delta$ with domain $C^\infty_c(M)$ is essentially self-adjoint in $L^2(M)$.
\end{thm-intro}

In \eqref{eq:bound-curv-princ-intro}, the notation $H(\varepsilon) < \alpha$, for $\alpha \in \R$, is understood in the sense of quadratic forms, that is for all $q \in X_\varepsilon$, we have
\begin{equation}
H(\varepsilon)(X,X) < \alpha g(X,X), \qquad \forall X \in T_q X_\varepsilon.
\end{equation}
For the explicit values of the constants $h^*_\varepsilon(c,r)$ and region $\Sigma(n,r)$ see Theorems~\ref{t:curvosc} and \ref{t:curvoscsuper}. Here, we only observe that if we take $c_1=c_2=c$ in \eqref{eq:bound-curv-sec-intro}, then $(c_1,c_2) \in \Sigma(n,r)$ with $r>2$ if $c>0$, and $(c_1,c_2) \in \Sigma(n,2)$ if $c \geq n/(n-1)^2$.

\subsection{Almost-Riemannian geometry}

As already mentioned, the motivation of this work comes from a conjecture on the essential self-adjointness of the Laplace-Beltrami  operator for almost-Riemannian structures (ARS). These structures have been introduced in \cite{ABS-Gauss-Bonnet}, and represent a large class of non-complete Riemannian structures. Roughly speaking, an ARS on a smooth manifold $N$ consist in a metric $g$ that is singular on an embedded smooth hypersurface $\mathcal Z\subset N$ and smooth on the complement $M=N\setminus \mathcal Z$. For the precise definition see Section~\ref{s:arg}. 

To introduce the results it suffices to observe that for any $q\in \mathcal Z$ there exists a neighborhood $U$ and a local generating family of smooth vector fields $X_1,\ldots, X_n$, orthonormal on $U\setminus\mathcal Z$, which are not linearly independent on $\mathcal Z$. The bracket-generating assumption,
\begin{equation}
	\text{Lie}_q(X_1,\ldots,X_n) = T_q N, \qquad\forall q\in N,
\end{equation}
implies that Riemannian geodesics can cross the singular region. In particular, sufficiently close points on opposite sides of $\mathcal{Z}$ can be joined by smooth trajectories minimizing the length: the Riemannian manifold $(M,g|_{M})$ is not geodesically complete, and hence the classical dynamics is not confined to $M$. 
Surprisingly, recent investigations have shown that the quantum dynamics is quite different. 
\begin{thm-intro}[Boscain, Laurent \cite{BL-LaplaceBeltrami}]\label{t:UgoCam}
Let $N$ be a 2-dimensional ARS on a compact orientable manifold, with smooth singular set $\mathcal{Z} \simeq \mathbb{S}^1$. Assume that, for every $q \in \mathcal{Z}$ and local generating family $\{X_1,X_2\}$, we have
\begin{equation}\label{eq:hypUgo}
\spn\{ X_1,X_2,[X_1,X_2]\}_q = T_q N, \qquad (\text{bracket-generating of step $2$}).
\end{equation}
Then, the Laplace-Beltrami operator $\Delta = \Delta_{\vol_g}$, with domain $C^\infty_c(N \setminus \mathcal{Z})$ is essentially self-adjoint in $L^2(N \setminus \mathcal{Z})$ and its unique self-adjoint extension has compact resolvent.
\end{thm-intro}

In the closing remarks of \cite{BL-LaplaceBeltrami} it has been conjectured that the above result holds true for any sub-Riemannian structure which is rank-varying or non-equiregular on an hypersurface. This is a large class of structures strictly containing the almost-Riemannian ones, to which we will restrict henceforth. We observe that the proof of the above result given in \cite{BL-LaplaceBeltrami} consists in a fine analysis which relies on the normal forms of local generating families of $2$-dimensional almost-Riemannian structures, which is available under the condition \eqref{eq:hypUgo}, but not for higher steps. Moreover, although normal forms for ARS are known also in dimension $n=3$, \cite{BCGM-3ARSNormal}, their complexity increases quickly with the number of degrees of freedom. Hence, it is unlikely for the technique of \cite{BL-LaplaceBeltrami} to yield general results.

On this topic, our main result is the following extension of Theorem~\ref{t:UgoCam}.

\begin{thm-intro}[Quantum completeness of regular ARS]\label{t:regularARS-intro}
Consider a regular almost-Rieman\-nian structure on a smooth manifold $N$ with compact singular region $\mathcal{Z}$. Then, the Laplace-Beltrami operator $\Delta$ with domain $C^\infty_c(M)$ is essentially self-adjoint in $L^2(M)$, where $M = N \setminus \mathcal{Z}$ or one of its connected components. Moreover, when $M$ is relatively compact, the unique self-adjoint extension of $\Delta$ has compact resolvent.
\end{thm-intro}

Regular almost-Riemannian structures (see Definition~\ref{d:regularARS}), are structures where the singular set $\mathcal Z$ is an embedded hypersurface without tangency points, that is, such that $\spn\{X_1,\ldots,X_n\} \pitchfork T_q \mathcal{Z}$ for all $q \in \mathcal{Z}$. Moreover, it is required that, locally $\det(X_1,\ldots,X_n) = \pm\psi^k$ for some $k \in \mathbb{N}$, where $\psi$ is a local submersion defining $\mathcal{Z}$. The latter condition implies that the Riemannian structure on $M=N\setminus \mathcal Z$ satisfies $(\star)$, allowing us to apply Theorem~\ref{t:main-intro}. We also remark that, even in dimension $2$, our result is stronger than Theorem~\ref{t:UgoCam}, as it allows for non-compact, non-orientable and, most importantly, higher step structures.

\subsubsection{Open problems} The conjecture of \cite{BL-LaplaceBeltrami} remains open for non-regular ARS. Notwithstanding, once a local generating family is given explicitly, it is easy to compute $\Veff$. In this way, one can apply the general Theorem~\ref{t:main-intro} to many specific examples of non-regular ARS, yielding the essential self-adjointness of their Laplace-Beltrami operator. On the other hand, Section~\ref{s:non-regular} contains examples of non-regular ARS where, even in dimension $n=2$, we are not able to infer whether $\Delta$ is essentially self-adjoint or not.

We mention that, after the publication of this paper, the techniques developed in this paper have been extended to sub-Laplacians, see \cite{Vale}.

\subsection{Notations and conventions}

In this paper, all manifolds are considered without boundary unless otherwise stated. On the smooth Riemannian manifold $(M,g)$, we denote with $|\cdot |$ the Riemannian norm, without risk of confusion. As usual, $C^\infty_c(M)$ denotes the space of smooth functions with compact support. We denote with $L^2(M)$ the complex Hilbert space of (equivalence classes of) functions $u : M \to \mathbb{C}$, with scalar product
\begin{equation}
\langle u, v \rangle  = \int_M u \bar{v}\, d \omega, \qquad u,v \in L^2(M),
\end{equation}
where the bar denotes complex conjugation. The corresponding norm is denoted by the symbol $\|u\|^2 = \langle u,u \rangle$. Similarly, $L^2(TM)$ is the complex Hilbert space of sections of the complexified tangent bundle $X : M \to TM^\mathbb{C}$, with scalar product
\begin{equation}
\langle X, Y \rangle  = \int_M g(X,Y)\, d \omega, \qquad  X,Y \in L^2(TM),
\end{equation}
where in the above formula, with an abuse of notation, $g$ denotes the Hermitian product on the fibers of $TM^\mathbb{C}$ induced by the Riemannian structure.

Following \cite[Ch. 4]{Gregorio}, we denote by $W^1(M)$ the Sobolev space of functions in $L^2(M)$ with distributional gradient $\nabla u \in L^2(TM)$. This is a Hilbert space with scalar product
\begin{equation}\label{eq:W1}
\langle u, v \rangle_{W^1}= \langle \nabla u, \nabla v \rangle + \langle u, v \rangle.
\end{equation}

We denote by $L^2_\loc(M)$ and $W^1_\loc(M)$ the space of functions $u: M \to \mathbb{C}$ such that, for any relatively compact set $\Omega \Subset M$, their restriction to $\Omega$ belongs to $L^2(\Omega)$ and $W^1(\Omega)$, respectively. Similarly, $L^2_\comp(M)$ and $W^1_\comp(M)$ denote the spaces of functions in $L^2(M)$ and $W^1(M)$, respectively, with compact support. We recall Green's identity:
\begin{equation}
\langle \nabla u, \nabla v \rangle = \langle u, -\Delta_\omega v \rangle, \qquad \forall u,v \in C^\infty_c(M).
\end{equation}

Finally, the symmetric bilinear form associated with $H$ is
\begin{equation}\label{eq:dirichletform}
	\mathcal{E}(u,v)=\int_M \bigg( g(\nabla u, \nabla v) + V u \bar{v} \bigg) \,d\omega, \qquad u,v \in C^\infty_c(M).
\end{equation}
We use the same symbol to denote the above integral, eventually equal to $+\infty$, for all functions $u,v \in W^1_\loc(M)$. We also let, for brevity, $\mathcal{E}(u) = \mathcal{E}(u,u)$. 
\section{Structure of the metric boundary}

In this section we collect some structural properties of the metric boundary (Lemma~\ref{l:dist}) and provide a simple formula for the computation of $\Veff$ (Proposition~\ref{l:pot}). The results of Lemma~\ref{l:dist} are standard if $\hat{M}$ is itself a Riemannian manifold, but some care is needed to deal with the presence of a general metric boundary, and the issue of low regularity.

Recall that the $(2,0)$ tensor $\mathrm{Hess}(\delta)$ denotes the Riemannian Hessian of $\delta$. The $(1,1)$ tensor $H$ obtained by ``raising an index'' is defined by $g(H X,Y) = \mathrm{Hess}(\delta)(X,Y)$ for any pair of tangent vectors $X,Y$. Finally, $R^\nabla$ is the $(3,1)$ curvature tensor
\begin{equation}
R^\nabla(X,Y)Z = \nabla_X \nabla_Y Z - \nabla_Y \nabla_X Z - \nabla_{[X,Y]} Z.
\end{equation}

\begin{figure}
\includegraphics[scale=1]{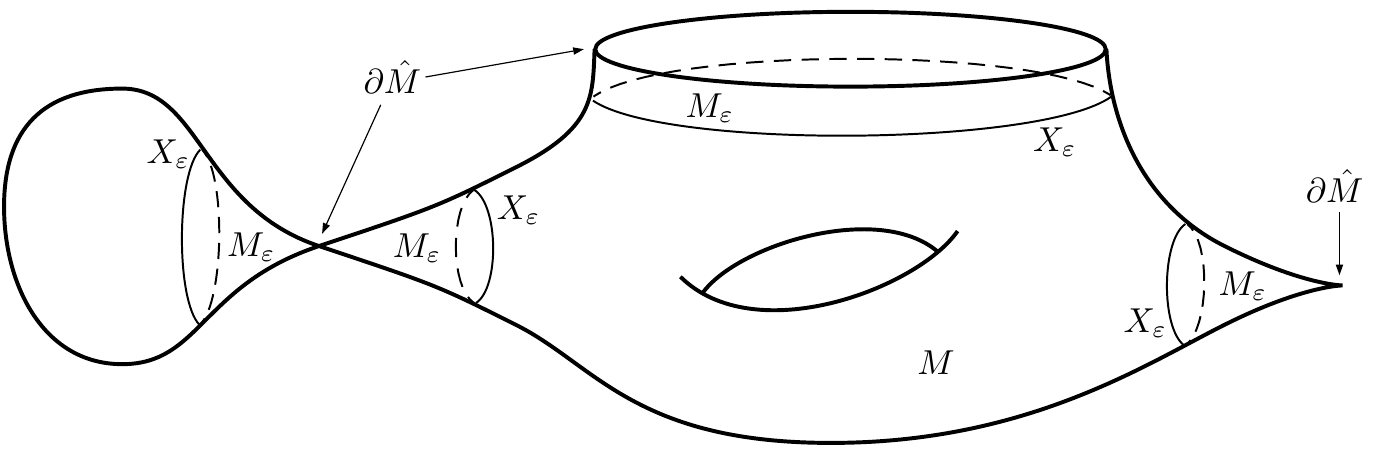}
\caption{Structure of the metric boundary.}
\end{figure}

\begin{lemma}[Properties of the metric boundary]\label{l:dist}
Assume that $(\star)$ holds, that is, there exists $\varepsilon >0$ such that the distance from the metric boundary $\delta : M \to \R$ is $C^2$ on $M_\varepsilon=\{ 0<\delta \leq \varepsilon\}$. Then, on $M_\varepsilon$ we have the following:
\begin{itemize}
\item The distance from the metric boundary satisfies the Eikonal equation:
\begin{equation*}
|\nabla \delta| = 1;
\end{equation*}
\item The integral curves of $\nabla \delta$ are geodesics, and therefore smooth;
\item Let $X_\varepsilon:=\{\delta = \varepsilon\}$. The map $\phi : (0,\varepsilon] \times X_\varepsilon \to M_\varepsilon$, defined by the flow of $\nabla \delta$,
\begin{equation*}
\phi(t,x) := e^{(t-\varepsilon) \nabla \delta}(x),
\end{equation*}
is a $C^1$-diffeomorphism such that $\delta(\phi(t,x)) = t$;
\item $H$ is smooth along the integral curves of $\nabla \delta$;
\item For any integral curve $\gamma(t)$ of $\nabla\delta$, $H$ satisfies the Riccati equation:
\begin{equation*}
\nabla_{\dot\gamma} H + H^2 + R =0,
\end{equation*}
where $R$ is the $(1,1)$ tensor defined by $R X = R^\nabla(X,\nabla\delta)\nabla\delta$, computed along $\gamma(t)$.
\item For any smooth measure $\omega$, the Laplacian $\Delta_\omega \delta$ and all its derivatives in the direction $\nabla \delta$ are continuous.
\end{itemize}
\end{lemma}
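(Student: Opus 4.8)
The plan is to establish the properties in the order they are listed, since each one feeds into the next. First, for the Eikonal equation, I would argue that $|\nabla\delta|\le 1$ everywhere (as $\delta$ is 1-Lipschitz, being an infimum of 1-Lipschitz functions $\hat d(\cdot,q)$), and for the reverse inequality, I would fix $p\in M_\varepsilon$ and use that for each $q\in\partial\hat M$ there is a near-minimizing curve from $p$ towards $q$; along such a curve $\delta$ decreases at unit rate, forcing $|\nabla\delta(p)|\ge 1$ at points of differentiability — and $\delta$ is $C^2$ here by $(\star)$. One must be a little careful because the minimizer lives in $\hat M$, not $M$, but one can take a curve in $M$ that decreases $\delta$ at rate arbitrarily close to $1$ over a short arc. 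Once $|\nabla\delta|\equiv 1$ on $M_\varepsilon$, the standard computation $\nabla_{\nabla\delta}\nabla\delta = \tfrac12\nabla|\nabla\delta|^2 = 0$ shows the integral curves of $\nabla\delta$ are geodesics; and since the metric $g$ is smooth on $M$, these geodesics are smooth even though $\delta$ itself is only $C^2$.

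Next, for the diffeomorphism statement: the flow $\phi(t,x)=e^{(t-\varepsilon)\nabla\delta}(x)$ is well defined for $t\in(0,\varepsilon]$ because the integral curves of the unit field $\nabla\delta$ cannot leave $M_\varepsilon$ before $\delta$ reaches $0$ (since $\tfrac{d}{dt}\delta(\phi(t,x))=|\nabla\delta|^2=1$, so $\delta\circ\phi(t,x)=t$), and they stay in $M$ for all $t>0$ by completeness of the integral curve away from the boundary. The map $\phi$ is $C^1$ because $\nabla\delta$ is a $C^1$ vector field (one derivative is lost from the $C^2$ regularity of $\delta$); it is injective since distinct integral curves of a field with no zeros do not cross, and $X_\varepsilon=\{\delta=\varepsilon\}$ is a $C^2$ hypersurface (regular level set, as $|\nabla\delta|=1\ne0$) transverse to $\nabla\delta$, so $\phi$ is a local diffeomorphism; surjectivity onto $M_\varepsilon$ follows by flowing any point back to its unique intersection with $X_\varepsilon$.

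For the regularity of $H=\mathrm{Hess}(\delta)$ along integral curves and the Riccati equation: this is the crux, and the main obstacle, because $\delta$ is only $C^2$, so a priori $\mathrm{Hess}(\delta)$ is merely continuous and one cannot differentiate it naively. The idea is to use the well-known fact that along a unit-speed geodesic $\gamma$, the shape operators of the equidistant hypersurfaces satisfy the matrix Riccati equation $\dot H + H^2 + R = 0$ where $R$ is built from the (smooth) ambient curvature along $\gamma$. More precisely, I would fix an integral curve $\gamma$, parametrize by $t=\delta$, and consider the tangent map of the flow $\phi$ restricted to $T_xX_\varepsilon$: this gives a family of Jacobi-type fields $J(t)$ along $\gamma$ solving $\nabla_{\dot\gamma}\nabla_{\dot\gamma}J + R^\nabla(J,\dot\gamma)\dot\gamma = 0$, which is a linear ODE with smooth coefficients, hence $J(t)$ is smooth in $t$; then $H(t)$ is the operator $J\mapsto \nabla_{\dot\gamma}J$ read through $J(t)$, i.e. $H(t)=\dot J(t)J(t)^{-1}$ acting on the relevant subspace, which is smooth in $t$ as long as $J(t)$ is invertible — and invertibility holds because $\phi$ is a diffeomorphism. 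Differentiating $H=\dot J J^{-1}$ and using the Jacobi equation yields $\dot H = \ddot J J^{-1} - \dot J J^{-1}\dot J J^{-1} = -R - H^2$, which is exactly the Riccati equation; in particular $H$ is not just continuous but smooth along each $\gamma$. Care is needed to identify $\mathrm{Hess}(\delta)$ (defined as the Hessian of the function $\delta$, which at a point only sees the $C^2$ structure) with this geometrically-defined shape operator; this follows from the first-variation/second-variation formula: on $M_\varepsilon$, for $X$ tangent, $\mathrm{Hess}(\delta)(X,X)=g(\nabla_X\nabla\delta,X)$, and $\nabla\delta$ restricted along $\gamma$ to the image of $d\phi$ is precisely the field whose covariant derivative is $\dot J J^{-1}$ applied to $J$.

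Finally, for the statement about $\Delta_\omega\delta$: write $\omega=e^{-f}\vol_g$ with $f$ smooth, so $\Delta_\omega\delta = \Delta\delta - g(\nabla f,\nabla\delta) = \tr H - f'$. The trace $\tr H(t)$ is smooth along each integral curve by the previous paragraph, and $f'=g(\nabla f,\nabla\delta)$ is smooth along $\gamma$ since $f$ is smooth and $\nabla\delta$ is smooth along $\gamma$ (the integral curve being a smooth geodesic); hence $\Delta_\omega\delta$ and all its $\nabla\delta$-derivatives exist and, being given by the solution of a smooth ODE with initial data depending $C^0$ (indeed $C^2$) on $x\in X_\varepsilon$ and $\varepsilon$, depend continuously on the base point. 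In particular $\left(\tfrac12\Delta_\omega\delta\right)'$ makes sense and is continuous, which is what is needed to define $\Veff$ in \eqref{eq:potential}. The one genuine subtlety throughout is bookkeeping the regularity: transverse to the geodesics one only has $C^1$ (from $\phi$) or $C^0$ (from the Hessian), while along the geodesics everything is smooth because it is governed by ODEs with smooth ($=$ ambient-metric) coefficients; the continuity of $\Delta_\omega\delta$ on $M_\varepsilon$ then follows from continuous dependence of ODE solutions on initial conditions, combined with the $C^1$ structure of $\phi$.
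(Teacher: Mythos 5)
Your argument is correct, and for most of the bullet points it coincides with the paper's proof: the $1$-Lipschitz bound via the triangle inequality, the Eikonal equation via near-minimizing curves to the metric boundary, the standard fact that integral curves of $\nabla\delta$ are geodesics, and the construction of the $C^1$-diffeomorphism $\phi$ are all the same. One small imprecision in your Eikonal step: a near-minimizer only forces the \emph{average} rate of decrease of $\delta$ to be close to $1$, not the rate at $p$ itself; the paper closes this by integrating $1-|\nabla\delta|\geq 0$ along the whole curve, and your version needs the continuity of $|\nabla\delta|$ near $p$ (available from $(\star)$) to localize — a one-line fix, not a gap.

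Where you genuinely diverge is the crux, namely the smoothness of $H$ along the flow and the Riccati equation. The paper asserts that $H$ satisfies the Riccati equation in the distributional sense (omitting the details, which would require distributional covariant derivatives) and then bootstraps regularity using the smoothness of $R(t)$ along the geodesic. You instead realize $H(t)$ as $\dot J(t)J(t)^{-1}$, where $J(t)$ is the restriction of $d\phi(t,\cdot)$ to $T_xX_\varepsilon$: since each $t\mapsto\phi(t,x)$ is a smooth geodesic of the smooth metric $g$, the variation fields solve the Jacobi equation, a linear ODE with smooth coefficients, so $J$ is smooth in $t$; invertibility of $J(t)$ comes from $\phi$ being a diffeomorphism (and $\mathrm{im}\,J(t)\subset\ker d\delta$ since $\delta\circ\phi(t,x)=t$); differentiating $\dot JJ^{-1}$ then gives the Riccati equation directly. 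This route is more self-contained — it avoids distributional derivatives entirely and derives, rather than postulates, the equation — at the price of the identification $\mathrm{Hess}(\delta)(J,\cdot)=g(\nabla_{\dot\gamma}J,\cdot)$, which you correctly flag: it rests on the symmetry $\nabla_{\partial_s}\partial_t=\nabla_{\partial_t}\partial_s$ for a variation that is only $C^1$ in the transverse parameter, justified here because the variation is a $C^1$ family of initial conditions pushed through the smooth geodesic flow. The final bullet ($\Delta_\omega\delta=\tr H-f'$ and continuity via continuous dependence on initial data) matches the paper.
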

\begin{rmk}
The tensor $R$ encodes the sectional curvatures of the planes containing $\nabla\delta$. In fact, for any unit vector $X$ orthogonal to $\nabla\delta$, we have $g(R X,X) =\mathrm{Sec}(\sigma)$, where $\sigma$ is the plane generated by $\nabla\delta$ and $X$.
\end{rmk}
\begin{proof}
Let $p,q \in M$. By the triangle inequality for $\hat{d}$, we have
\begin{equation}
\delta(p) \leq \hat{d}(p,q) + \delta(q).
\end{equation}
Since $\hat{d}(p,q) = d(p,q)$, we obtain $|\delta(p) - \delta(q)| \leq d(p,q)$, that is $\delta$ is $1$-Lipschitz. As a consequence, $\delta$ is differentiable almost everywhere, with $|\nabla \delta | \leq 1$. We now restrict to $M_\varepsilon$ where, by hypothesis, $\nabla\delta$ is $C^1$.

Observe that $(M,d)$ is a length space, with length functional $\ell$, and so is $(\hat{M},\hat{d})$, with the length functional
\begin{equation}
\hat{\ell}(\gamma) = \sup \sum_{i=1}^N \hat{d}(\gamma(t_{i-1}),\gamma(t_i)),
\end{equation}
where the sup is taken over all partitions $0=t_0\leq t_1 \leq \ldots \leq t_N = 1$ and $N \in \mathbb{N}$. Recall that length functionals are continuous as a function of the endpoints of the path \cite[Prop. 2.3.4]{BBI}. Let $\gamma:[0,1] \to \hat{M}$ be a rectifiable curve, such that $\gamma(t) \in M$ for all $t>0$, i.e.\ only the initial point can belong to the metric boundary. Up to reparametrization, we can assume that $\gamma$ is Lipschitz, so that it is differentiable a.e. on $(0,1]$, where its speed is given by $|\dot\gamma(t)|$. In this case,
\begin{equation}
\hat{\ell}(\gamma) = \lim_{s \to 0^+} \hat{\ell}(\gamma|_{[s,1]}) =  \lim_{s \to 0^+} \ell(\gamma|_{[s,1]}) = \int_0^1 |\dot\gamma(t)| dt = \ell(\gamma).
\end{equation}
In particular, for such curves we can measure the length as the usual Lebesgue integral of the speed $|\dot\gamma(t)|$ using the Riemannian structure of $M$. Now recall that, for $p \in M_\varepsilon$,
\begin{align}
\delta(p) & = \inf\{\hat{d}(q,p)\mid q \in \partial\hat{M} \} \\
& = \inf \{\hat{\ell}(\gamma) \mid \gamma(0) \in \partial\hat{M} ,\; \gamma(1) =p\} \\
& = \inf \{\ell(\gamma) \mid \gamma(0) \in \partial\hat{M} ,\; \gamma(1) = p,\; \gamma(t) \in M \text{ for all } t >0\}.
\end{align}
Consider a sequence of Lipschitz curves $\gamma_n:[0,1] \to \hat{M}$ such that $\gamma_n(0) \in \partial\hat{M} $, $\gamma(1) = p$, $\gamma(t) \in M$ for all $t>0$, and
\begin{equation}
\lim_{n \to +\infty} \ell(\gamma_n) = \delta(p).
\end{equation}
Since $\ell$ is invariant by reparametrization, we assume that $\gamma_n$ is parametrized by constant speed $|\dot{\gamma}_n| = \ell(\gamma_n)$ and, since $p \in M_\varepsilon$, we can assume that $\gamma_n(t) \in M_\varepsilon$ for all $t>0$. Since $\delta(\gamma_n(t)) \to 0$ for $t \to 0^+$, we obtain
\begin{equation}
\delta(p) \leq \int_0^1 |g(\nabla\delta,\dot\gamma_n)| dt \leq \int_0^1|\nabla\delta| |\dot\gamma_n| dt = \ell(\gamma_n)\int_0^1|\nabla\delta|dt,
\end{equation}
where we used Cauchy-Schwarz inequality. Integrating $1-|\nabla\delta| \geq 0$ on $\gamma_n$, we obtain
\begin{equation}
0  \leq \int_0^1 \left(1- |\nabla\delta|\right) dt  \leq \frac{\ell(\gamma_n) - \delta(p) }{\ell(\gamma_n)}  \longrightarrow 0, \qquad n \to +\infty.
\end{equation}
This proves that $|\nabla \delta| \equiv 1$ on $M_\varepsilon$.

It is well-known that the integral lines of the gradient of $C^2$ functions satisfying the Eikonal equation are Riemannian geodesics (see \cite[Ch.\ 5, Sec.\ 2]{Petersen}). In particular, the curve $\gamma(t)$ such that $\gamma(0) = p$ and $\dot{\gamma} = -\nabla\delta$ is a unit-speed geodesic such that $\delta(\gamma(t)) = \delta(p) -t$. Using Cauchy-Schwarz inequality, one can show that this is the unique unit-speed curve with this property.

Now consider the set $X_\varepsilon = \{\delta = \varepsilon\}$. Since $\delta$ is $C^2$ with no critical points, $X_\varepsilon\subset M_\varepsilon$ is a $C^2$ embedded hypersurface. Then, we define the $C^1$ map:
\begin{equation}
\phi :(0,\epsilon] \times X_\varepsilon \to M_\varepsilon, \qquad \phi(t,x) = e^{(t -\varepsilon )\nabla\delta}(x),
\end{equation}
where $s\mapsto e^{s V}(x)$ is the integral curve of $V$ starting at $x \in X_\varepsilon$. Since $|\nabla\delta| = 1$ on $M_\varepsilon$, the flow is well defined on $(0,\varepsilon]$. This map is indeed a $C^1$-diffeomorphism, and $\delta(\phi(t,x)) = t$.

The fact that $H(t) = \mathrm{Hess}(\delta)|_{\gamma(t)}$ satisfies the Riccati equation is usually proved assuming that $\delta$ is smooth (see, e.g. \cite[Prop. 7]{Petersen}). When $\delta \in C^2$, then $H$ satisfies the Riccati equation in the distributional sense. We omit the details since they would require the introduction of distributional covariant derivatives, which is out of the scope of this paper (see, e.g., \cite[Ch. 1]{marsden}). Then, one obtains that $H(t)$ is actually smooth via a bootstrap argument, exploiting the fact that the term $R=R(t)$ has the same regularity of $\nabla \delta|_{\gamma(t)}$. The same argument shows that all derivatives $\nabla^i_{\nabla \delta} H$ are continuous on $M_\varepsilon$.

The last statement follows from the formula $\Delta \delta = \tr H$ for the Laplace-Beltrami operator, and the fact that, if $\omega = e^{h} \vol_g$, it holds $\Delta_\omega = \Delta + g(\nabla h,\nabla \cdot)$.
\end{proof}

\begin{prop}[Formula for the effective potential]\label{l:pot}
Through the identification $M_\varepsilon \simeq (0,\varepsilon] \times X_\varepsilon$ of Lemma~\ref{l:dist} we have
\begin{equation}
d\omega(t,x) = e^{2 \vartheta(t,x)}\, dt\, d\mu(x),
\end{equation}
where $d\mu$ is a fixed $C^1$ measure on $X_\varepsilon$. The function $\vartheta$ is smooth in $t\in (0,\varepsilon]$ and is continuous in $x \in X_\varepsilon$, together with its derivatives w.r.t.\ $t$. Moreover,
\begin{equation}
\Veff = (\partial_t \vartheta)^2 + \partial_t^2 \vartheta.
\end{equation}
In particular, the effective potential $\Veff$ is continuous. 
\end{prop}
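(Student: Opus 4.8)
The plan is to unwind the effective potential in the product coordinates $(t,x)$ provided by Lemma~\ref{l:dist}, in which $\delta(t,x)=t$ and $\partial_t$ is the unit gradient field $\nabla\delta$. The first step is to write the smooth measure $\omega$, which restricted to $M_\varepsilon$ pushes forward under the $C^1$-diffeomorphism $\phi$ to a measure on $(0,\varepsilon]\times X_\varepsilon$, in the form $d\omega=\rho(t,x)\,dt\,d\mu(x)$ for some fixed $C^1$ reference measure $d\mu$ on the $C^2$ hypersurface $X_\varepsilon$, with $\rho$ a positive density. Since $\omega$ is a smooth measure on $M$, along each integral curve $t\mapsto\phi(t,x)$ the density $\rho$ is a smooth, strictly positive function of $t\in(0,\varepsilon]$; in the transverse variable $x$ it is only as regular as $\phi$ and $d\mu$ allow, i.e.\ $\rho$ and its $t$-derivatives are continuous in $x$. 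This lets us set $\vartheta:=\tfrac12\log\rho$, which inherits the same regularity: smooth in $t$, continuous in $x$ jointly with all its $t$-derivatives. This is essentially bookkeeping on Lemma~\ref{l:dist}.

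The second step is to identify $\Delta_\omega\delta$ in these coordinates. Writing $\omega=e^{h}\vol_g$ locally, the divergence formula gives $\Delta_\omega\delta=\Delta\delta+g(\nabla h,\nabla\delta)=\Delta\delta+h'$. On the other hand, in the $(t,x)$ coordinates the Riemannian volume form itself has the shape $d\vol_g=J(t,x)\,dt\,d\sigma_\varepsilon(x)$ for the induced measure $d\sigma_\varepsilon$ on $X_\varepsilon$ and a Jacobian $J$, and the classical first-variation-of-area identity yields $\partial_t\log J=\tr H=\Delta\delta$. Combining, $\Delta_\omega\delta=\partial_t\log(Je^{h})$, which up to the constant-in-$t$ ratio $d\sigma_\varepsilon/d\mu$ is exactly $\partial_t\log\rho=2\,\partial_t\vartheta$. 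Hence
\[
\frac{\Delta_\omega\delta}{2}=\partial_t\vartheta
\]
on $M_\varepsilon$. A cleaner route, avoiding Jacobians, is to note that for any $u\in C^\infty_c$ supported in $M_\varepsilon$ one has $\int u'\,d\omega=-\int u\,(\Delta_\omega\delta)\,d\omega$ by Green's identity applied with $\delta$ (using $|\nabla\delta|=1$), while in coordinates $\int u'\,d\omega=\int(\partial_t u)\rho\,dt\,d\mu=-\int u\,(\partial_t\rho)\,dt\,d\mu=-\int u\,(\partial_t\log\rho)\,d\omega$; since $u$ is arbitrary, $\Delta_\omega\delta=\partial_t\log\rho=2\partial_t\vartheta$.

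The third step is immediate: since the normal derivative $f'$ is by definition $df(\nabla\delta)=\partial_t f$ in these coordinates, substituting $\Delta_\omega\delta/2=\partial_t\vartheta$ into \eqref{eq:potential} gives
\[
\Veff=\left(\partial_t\vartheta\right)^2+\left(\partial_t\vartheta\right)'=\left(\partial_t\vartheta\right)^2+\partial_t^2\vartheta.
\]
Continuity of $\Veff$ then follows from the regularity of $\vartheta$ established in the first step: $\partial_t\vartheta$ and $\partial_t^2\vartheta$ are continuous jointly in $(t,x)$, hence so is $\Veff$. I expect the only real subtlety to be the regularity accounting in the transverse variable: one must make sure that "smooth measure on $M$" together with the merely $C^1$ nature of the diffeomorphism $\phi$ and the merely $C^2$ nature of $X_\varepsilon$ still delivers joint continuity of $\partial_t^2\vartheta$ in $x$ — this is where one leans on the bootstrap statement of Lemma~\ref{l:dist} that $H$ and all its normal derivatives are continuous on $M_\varepsilon$, since $\Delta_\omega\delta=\tr H + h'$ then has continuous normal derivatives of every order, and $\partial_t^k\vartheta=\tfrac12\partial_t^{k-1}(\Delta_\omega\delta)$ inherits this. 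The algebraic identity itself is routine once the coordinate expression for $\Delta_\omega\delta$ is in hand.
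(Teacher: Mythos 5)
Your ``cleaner route'' is precisely the paper's own argument: one computes $\int \partial_t u \, d\omega$ once via Green's identity (giving $-\int u\,\Delta_\omega\delta\,d\omega$) and once in the product coordinates, concluding $\Delta_\omega\delta = 2\partial_t\vartheta$ first distributionally and then classically because $\Delta_\omega\delta$ is continuous by Lemma~\ref{l:dist}, after which the formula for $\Veff$ is immediate substitution. The only caveat is one of ordering, not substance: you should derive $\partial_t\rho=\rho\,\Delta_\omega\delta$ as a distributional identity and \emph{then} upgrade it using the continuity of the right-hand side, rather than integrating $\partial_t\rho$ by parts before its existence is known.
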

\begin{rmk}
By choosing a different reference measure $d\tilde{\mu}$ on $X_\varepsilon$, we have that $\tilde{\vartheta}(t,x) = \vartheta(t,x) + g(x)$, so that the value of $\Veff(t,x)$ does not depend on this choice.
\end{rmk}
\begin{proof}

First observe that, if $\delta$ is smooth on $M_\varepsilon$, then the map $\phi : (0,\varepsilon] \times X_\varepsilon \to M_\varepsilon$ is a smooth diffeomorphism and $d\mu$ can be chosen to be smooth. With the identification $M_\varepsilon \simeq (0,\varepsilon] \times X_\varepsilon$, we have $\nabla\delta = \partial_t$. Then, by definition of $\dive_\omega$, we obtain
\begin{align}
(\Delta_\omega \delta)  \omega = \dive_\omega(\partial_t) \omega  & = \mathcal{L}_{\partial_t} \omega  = (2 \partial_t \vartheta) e^{2\vartheta} dt\, d\mu + e^{2\vartheta} \mathcal{L}_{\partial_t} (dt\, d\mu) = (2\partial_t \vartheta) \omega,
\end{align}
where we used the fact that $\mathcal{L}_{\partial_t} (d\mu) =0$. Moreover,
\begin{equation}\label{eq:derseconda}
(\Delta_\omega\delta)^\prime = 2 g(\partial_t, \nabla(\partial_t \vartheta )) =  2 \partial_t^2 \vartheta.
\end{equation}
The statement then follows from the definition of the effective potential \eqref{eq:potential}.

In the general case, $\delta$ is only $C^2$, hence $\vartheta : (0,\varepsilon] \times X_\varepsilon \to \R$ is only continuous. Nevertheless, we claim that $2 \partial_t\vartheta = \Delta_\omega \delta$, for any fixed $x \in X_\varepsilon$. By the last item of Lemma~\ref{l:dist} and \eqref{eq:derseconda}, this will conclude the proof of the statement. 

In order to prove the claim, let $\chi \in C^2_c(X_\varepsilon)$ and $\varphi \in C^\infty_c((0,\varepsilon))$. Then,
\begin{align}
\int_{X_{\varepsilon}} \left( \int_{0}^\varepsilon (e^{2\vartheta} \Delta_\omega \delta) \varphi(t)\, dt\right) \chi(x)\, d\mu(x) & = \int_{(0,\varepsilon)\times X_{\varepsilon}} (\Delta_\omega \delta) \chi(x) \varphi(t)   \, d\omega   \\
&= -\int_{(0,\varepsilon)\times X_{\varepsilon}} \chi(x)\partial_t \varphi (t)\, d\omega \\
& = -\int_{X_{\varepsilon}} \left( \int_{0}^\varepsilon  e^{2\vartheta}\partial_t \varphi(t)\, dt\right) \chi(x)\, d\mu(x),
\end{align}
where we used Fubini's Theorem, Green's identity, and the fact that, with the identification $M_\varepsilon \simeq (0,\varepsilon] \times X_\varepsilon$, we have $\nabla\delta = \partial_t$. By the arbitrariness of $\chi$, we have, 
\begin{equation}
	\int_{0}^\varepsilon (e^{2\vartheta} \Delta_\omega \delta) \varphi \, dt = -\int_{0}^\varepsilon e^{2\vartheta} \partial_t \varphi\, dt, \qquad \forall \varphi\in C^\infty_c((0,\varepsilon)).
\end{equation}
Since $e^{2\vartheta} \Delta_\omega \delta$ is continuous, $\partial_t e^{2\vartheta} = e^{2\vartheta} \Delta_{\omega}\delta$ in the strong sense. In particular, by the chain rule for distributional derivatives, $2\partial_t \vartheta =  \Delta_\omega\delta$, completing the proof of the claim.
\end{proof}

\section{Main self-adjointness criterion}\label{s:main}

In this section we prove Theorem~\ref{t:main-intro}, which we restate here for the reader's convenience.

\begin{theorem}[Main quantum completeness criterion]\label{t:main}
Let $(M,g)$ be a Riemannian manifold satisfying $(\star)$ for $\varepsilon >0$. Let $\pot \in L^2_\loc(M)$. Assume that there exist $\kappa \geq 0$ and a Lipschitz function $\nu : M \to \R$ such that, close to the metric boundary, 
	\begin{equation}\label{eq:hypo-1}
\Veff + \pot  \geq \frac{3}{4\delta^2}-\frac{\kappa}{\delta} -\nu^2, \qquad  \text{for $\delta \leq \varepsilon$}.
\end{equation}
Moreover, assume that there exist $\varepsilon' < \varepsilon$, such that,
\begin{equation}\label{eq:hypo-2}
\pot  \geq - \nu^2, \qquad \text{for $\delta > \varepsilon'$}.
\end{equation}	
Then, $H=-\Delta_\omega+\pot$ with domain $C^\infty_c(M)$ is essentially self-adjoint in $L^2(M)$.

Finally, if $\hat{M}$ is compact, then the unique self-adjoint extension of $H$ has compact resolvent. Therefore, its spectrum is discrete and consists of eigenvalues with finite multiplicity.
\end{theorem}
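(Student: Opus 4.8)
The plan is to verify essential self-adjointness through the standard criterion that $\ker(H^*-i)=\{0\}$, hence also $\ker(H^*+i)=\{0\}$ by complex conjugation, where $H^*$ is the adjoint on its natural domain $\dom(H^*)=\{u\in L^2(M):(-\Delta_\omega+\pot)u\in L^2(M)\}$, the equation understood in the sense of distributions; working with $z=\pm i$ rather than a real spectral parameter circumvents the need to establish semiboundedness of $\mathcal E$ in the generality allowed here. I would first record that, since $\pot\in L^2_\loc(M)$, interior elliptic regularity (together with a Kato-type inequality) gives $u\in W^1_\loc(M)$ and $\pot|u|^2\in L^1_\loc(M)$ for every $u\in\dom(H^*)$; this makes Green's identity and all the integrations by parts below legitimate on relatively compact subsets of $M$, and $\mathcal E(\chi u)$ meaningful for $\chi\in C^\infty_c(M)$.

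The analytic heart is a Hardy-type inequality localized at the metric boundary. Using the identification $M_\varepsilon\simeq(0,\varepsilon]\times X_\varepsilon$ with $d\omega=e^{2\vartheta}\,dt\,d\mu$ of Proposition~\ref{l:pot}, for $\phi\in C^\infty_c(M_\varepsilon)$ supported away from $X_\varepsilon$ I would discard the nonnegative tangential part of $|\nabla\phi|^2$, keeping only $|\partial_t\phi|^2$ (recall $\nabla\delta=\partial_t$), substitute $\psi:=e^{\vartheta}\phi$, and integrate by parts in $t$ fibrewise, using that $\vartheta(\cdot,x)$ is smooth with $(\partial_t\vartheta)^2+\partial_t^2\vartheta=\Veff$, to obtain
\begin{equation*}
\int_{M_\varepsilon}|\partial_t\phi|^2\,d\omega=\int_{X_\varepsilon}\int_0^\varepsilon\big(|\partial_t\psi|^2+\Veff\,|\psi|^2\big)\,dt\,d\mu.
\end{equation*}
Applying the one-dimensional Hardy inequality $\int_0^\varepsilon|\partial_t\psi|^2\,dt\ge\tfrac14\int_0^\varepsilon t^{-2}|\psi|^2\,dt$ together with \eqref{eq:hypo-1}, and using $\tfrac14+\tfrac34=1$ and $|\psi|^2\,dt\,d\mu=|\phi|^2\,d\omega$, the residual terms $\kappa/\delta$ and $\nu^2$ are absorbed by Young's inequality, yielding constants $c_0>0$, $\lambda_0\ge0$ with
\begin{equation*}
\mathcal E(\phi)+\lambda_0\|\phi\|^2+\|\nu\phi\|^2\ \ge\ c_0\int_{M_\varepsilon}\frac{|\phi|^2}{\delta^2}\,d\omega,
\end{equation*}
and an elementary analogue away from the metric boundary via \eqref{eq:hypo-2}. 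A density argument extends this to $\phi$ in $W^1_\comp(M)$.

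To conclude, suppose $u\in\ker(H^*-i)$. I would run an Agmon-type cutoff argument: choose a Lipschitz function $F$ on $M$ with $|\nabla F|\le\nu$ (for the standard choices $\nu=\delta$ or $\nu=d(p,\cdot)$ this is explicit, and near $\partial\hat M$ with $\nu=\delta$ the term $\nu^2$ is anyway dominated by $c_0\delta^{-2}$), and cutoffs $\chi_n\in C^\infty_c(M)$ with $0\le\chi_n\le1$, equal to $1$ on $\{\delta\ge1/n\}\cap K_n$ for an exhaustion $(K_n)$ of $M$ away from the boundary, and $|\nabla\chi_n|\le C/\delta$ supported in an annulus $A_n$ collapsing onto $\partial\hat M$; such logarithmic cutoffs exist because $|\nabla\delta|=1$. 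Pairing $(-\Delta_\omega+\pot-i)u=0$ against $e^{2F}\chi_n^2\bar u$, integrating by parts and taking real parts, the contribution of $\pot$ is cancelled against $|\nabla F|^2\le\nu^2$, and the remaining cutoff errors are controlled by $\int_{A_n}|\nabla\chi_n|^2 e^{2F}|u|^2\,d\omega\le C^2\int_{A_n}\delta^{-2}e^{2F}|u|^2\,d\omega$; taking imaginary parts, $\int_M\chi_n^2 e^{2F}|u|^2\,d\omega$ is bounded by the same quantity. Since the right-hand side tends to $0$ by dominated convergence once one knows $\delta^{-1}e^{F}u\in L^2(M_\varepsilon)$ -- itself obtained by iterating the pairing over a finite ladder of weight exponents $\delta^{2\beta}$ with $\beta\uparrow0$, using the Hardy inequality at each step -- letting $n\to\infty$ forces $e^{F}u\equiv0$, hence $u\equiv0$. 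Thus $H$ is essentially self-adjoint.

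Finally, if $\hat M$ is compact, then $M$ is relatively compact, $\nu$ is bounded, $\mathcal E$ is semibounded, and $\overline H$ is the unique self-adjoint extension with form domain $Q$; the Hardy inequality gives $\int_{M_\varepsilon}\delta^{-2}|u|^2\,d\omega\le C(\mathcal E(u)+\|u\|^2)$ for $u\in Q$, so a $Q$-bounded family has $L^2$-mass on $\{\delta<\eta\}$ at most $\eta^2 C(\mathcal E(u)+\|u\|^2)$, uniformly negligible as $\eta\to0$, while on the relatively compact region $\{\delta\ge\eta\}$ the uniform $W^1$-bound gives $L^2$-precompactness by Rellich--Kondrachov; a diagonal extraction shows $Q\hookrightarrow L^2$ is compact, so $\overline H$ has compact resolvent, hence discrete spectrum with finite-multiplicity eigenvalues. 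I expect the main obstacle to be the Hardy inequality of the second step: the effective-potential identity must be applied fibrewise and reassembled by Fubini under the low regularity present here ($\delta\in C^2$, $\vartheta$ merely continuous in $x$ but smooth in $t$), and the estimate closes \emph{only} thanks to the exact arithmetic $\tfrac14+\tfrac34=1$, so the blow-up rate $3/(4\delta^2)$ in \eqref{eq:hypo-1} is used optimally; a secondary difficulty is the weight-exponent iteration upgrading $u\in L^2$ to $\delta^{-1}u\in L^2$ near $\partial\hat M$, which is the analytic shadow of the limit-point property of $-\partial_t^2+c\,t^{-2}$ for $c\ge\tfrac34$.
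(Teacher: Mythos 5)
Your overall architecture (fibrewise Hardy inequality via the substitution $\psi=e^{\vartheta}\phi$, an Agmon-type weighted identity for deficiency elements, and Hardy--plus--Rellich for the compact resolvent) matches the paper's, and your derivation of $\int|\partial_t\phi|^2\,d\omega=\int(|\partial_t\psi|^2+\Veff|\psi|^2)\,dt\,d\mu$ is exactly Proposition~\ref{prop:weak-hardy}. Working with $\ker(H^*\mp i)$ instead of semiboundedness is a legitimate variant, and folding the $\nu^2$ term into an exponential weight rather than invoking the commutator theorem \cite[Thm.\ X.37]{MR0493420} as in Proposition~\ref{p:toglipotenziale} could in principle work (though note that $V\geq-\nu^2$ and $|\nabla F|^2\leq\nu^2$ only give $V-|\nabla F|^2\geq-2\nu^2$, so nothing actually ``cancels''; this step needs more care than you give it).

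The genuine gap is the control of the cutoff error at the metric boundary. Your cutoffs satisfy $|\nabla\chi_n|\leq C/\delta$ on a collapsing annulus $A_n$, producing the error $C^2\int_{A_n}\delta^{-2}e^{2F}|u|^2\,d\omega$, and you correctly observe that this tends to zero only if one already knows $\delta^{-1}e^{F}u\in L^2(M_\varepsilon)$. But that integrability is essentially equivalent to the theorem itself at the critical constant $3/4$ (in the model $-\partial_t^2+c\,t^{-2}$ with $c$ slightly below $3/4$, the maximal domain contains $u\sim t^{-1/2+\epsilon}$ with $\delta^{-1}u\notin L^2$), and the ``finite ladder of weight exponents $\delta^{2\beta}$'' does not close. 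Concretely: at each rung the test function must still vanish on $M_\zeta$, so its gradient on the transition annulus $\{\zeta\leq\delta\leq2\zeta\}$ is of order $\zeta^{\beta-1}$, giving an error $\zeta^{2\beta-2}\int_{A_\zeta}|u|^2$; the information from the previous rung ($\delta^{\beta'-1}u\in L^2$ with $\beta'>\beta$) only yields $\int_{A_\zeta}|u|^2=o(\zeta^{2-2\beta'})$, which is strictly weaker than the required $o(\zeta^{2-2\beta})$. For the same reason you cannot afford to degrade the Hardy constant to some $c_0<1$ by Young's inequality before running the uniqueness argument: with a gain $c_0\delta^{-2}$, $c_0<1$, the natural weight behaves like $\delta^{\sqrt{c_0}}$ and its derivative is unbounded on the annulus. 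The paper's resolution (Proposition~\ref{p:Agmon}) is to take the weight $f=F(\delta)$ with $F$ solving \emph{exactly} $F'=\sqrt{\delta^{-2}-\kappa\delta^{-1}}\,F$ on $[2\zeta,\eta]$ --- so that $F'^2$ is cancelled identically by the full Hardy gain $(\delta^{-2}-\kappa\delta^{-1})F^2$, forcing $F(\tau)\sim\tau$ near $0$ --- and then to extend $F$ linearly to zero on $[\zeta,2\zeta]$ with slope $F(2\zeta)/\zeta$ bounded \emph{uniformly in} $\zeta$; the residual error is then $K^2\int_{\zeta\leq\delta\leq2\zeta}|u|^2\to0$ using only $u\in L^2$, with no need for the a priori weighted integrability. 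Without this (or an equivalent device) your uniqueness step does not go through.
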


\begin{rmk}
It is well-known that the $3/4$ factor in \eqref{eq:hypo-1} is optimal and cannot be replaced with a smaller constant. (See, e.g., \cite[Thm. X.10]{MR0493420} for the one-dimensional case.) However, as proven in \cite{Nenciu2008} in the case of bounded domains in $\mathbb R^n$, the whole right hand side of \eqref{eq:hypo-1} can be replaced by functional expressions of $\delta$ that satisfies some precise conditions. For clarity, and since it is sufficient for the forthcoming applications, we limit ourselves to an expression of the form \eqref{eq:hypo-1}. Notwithstanding, we see no obstacles in applying the refined techniques of \cite{Nenciu2008} in our geometrical setting to obtain sharper functional conditions.
\end{rmk}

An important ingredient in the proof is the inclusion $D(H^*)\subset W^1_\loc(M)$. For a general $V \in L^2_\loc(M)$, this \emph{a-priori} regularity is not guaranteed. As proven in \cite[Thm. 2.3]{BMS-manifolds}, this inclusion holds whenever the potential $V \in L^2_\loc(M)$ can be decomposed as $V = V_+ + V_-$, where $V_+ \geq 0$ and $V_- \leq 0$ are such that, for any compact $K \subset M$ there are positive constants $a_K <1$ and $C_K$ such that,
\begin{equation}\label{eq:ShubinASSA}
\left(\int_K |V_-|^2 |u|^2 \, d\omega\right)^{1/2} \leq a_K \|\Delta_\omega u \| + C_K \|u\|, \qquad \forall u \in C^\infty_c(M).
\end{equation}
This is true, for example, when $V_- \in L^p_\loc(M)$ with $p > n/2$ for $n \geq 4$ and $p = 2$ for $n \leq 3$ or is in the local Stummel class, see \cite[Remark 2.2]{BMS-manifolds} and also \cite{DonnelyGarofalo97}. In particular, this is certainly true when $V \in L^{\infty}_\loc(M)$.

\begin{lemma}\label{l:shubin}
	Under the assumptions of Theorem~\ref{t:main}, $D(H^*)\subset W^1_\loc(M)$.
\end{lemma}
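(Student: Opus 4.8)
The plan is to deduce the lemma from the decomposition criterion of \cite[Thm. 2.3]{BMS-manifolds} recalled above: it suffices to exhibit a splitting $V = V_+ + V_-$ with $V_+ \geq 0$, $V_- \leq 0$, and $V_-$ satisfying the relative bound \eqref{eq:ShubinASSA} for every compact $K \subset M$. I will show that the hypotheses \eqref{eq:hypo-1}--\eqref{eq:hypo-2} force $V$ to be \emph{bounded below on every compact subset of $M$}; once this is established, the standard choice $V_+ := \max\{V,0\}$, $V_- := \min\{V,0\}$ works, because then $V_-$ is locally bounded and \eqref{eq:ShubinASSA} holds trivially.

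First I would fix a compact $K \subset M$ and set $\delta_K := \min_K \delta$. Since $\delta$ is the distance to $\partial\hat M = \hat M \setminus M$ and $K \cap \partial\hat M = \emptyset$, we have $\delta_K > 0$, i.e.\ $K$ stays uniformly away from the metric boundary. Next split $K = K_1 \cup K_2$, where $K_1 := \{p \in K : \delta(p) \leq \varepsilon'\}$ and $K_2 := \{p \in K : \delta(p) > \varepsilon'\}$. On $K_2$, hypothesis \eqref{eq:hypo-2} gives $V \geq -\nu^2 \geq -\max_K \nu^2$, a finite quantity since the Lipschitz function $\nu$ is bounded on the compact set $K$. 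On $K_1 \subset \{\delta_K \leq \delta \leq \varepsilon'\}$, hypothesis \eqref{eq:hypo-1} gives $V \geq \tfrac{3}{4\delta^2} - \tfrac{\kappa}{\delta} - \nu^2 - \Veff$; here $\tfrac{3}{4\delta^2}$ and $\tfrac{\kappa}{\delta}$ are bounded on $K_1$ because $\delta \geq \delta_K > 0$ there, $\nu^2$ is bounded on $K$ as above, and $\Veff$ is continuous on $M_\varepsilon$ by Proposition~\ref{l:pot}, hence bounded on the compact set $K_1 \subset M_\varepsilon$. Combining the two estimates, $V \geq -C_K$ on $K$ for some constant $C_K \geq 0$.

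Consequently $V_- = \min\{V,0\}$ satisfies $|V_-| \leq C_K$ on $K$, so $V_- \in L^\infty_\loc(M)$ (and $V_- \leq 0$ by construction), while $V_+ = \max\{V,0\} = V - V_- \geq 0$ lies in $L^2_\loc(M)$ since $V \in L^2_\loc(M)$ and $V_- \in L^\infty_\loc(M) \subset L^2_\loc(M)$. For any such $K$ and any $u \in C^\infty_c(M)$,
\[
\left( \int_K |V_-|^2 |u|^2 \, d\omega \right)^{1/2} \leq C_K \left( \int_K |u|^2 \, d\omega \right)^{1/2} \leq C_K \|u\|,
\]
which is exactly \eqref{eq:ShubinASSA} with $a_K = 0$. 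Applying \cite[Thm. 2.3]{BMS-manifolds} to this decomposition of $V$ yields $D(H^*) \subset W^1_\loc(M)$, as claimed.

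I do not expect a genuine obstacle: the argument is a reduction to a cited theorem together with an elementary lower bound. The only point requiring a little care is that the slab $\{\delta_K \leq \delta \leq \varepsilon'\}$ need not be compact, since the metric boundary may be non-compact; one must therefore bound $\Veff$ on its intersection with $K$, invoking the continuity of $\Veff$ from Proposition~\ref{l:pot} rather than any global bound. The conceptual content is that the assumptions of Theorem~\ref{t:main}, although they permit $V$ to be strongly singular \emph{at} the metric boundary, rule out any singular negative behaviour of $V$ on the interior of $M$, which is precisely what the a-priori regularity $D(H^*) \subset W^1_\loc(M)$ requires.
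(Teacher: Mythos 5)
Your argument is correct and is essentially the paper's own: the proof of Lemma~\ref{l:shubin} likewise observes that hypotheses \eqref{eq:hypo-1}--\eqref{eq:hypo-2} yield \eqref{eq:ShubinASSA} with $a_K=0$ and then invokes \cite[Thm. 2.3]{BMS-manifolds}. You have simply supplied the (correct) details of why the hypotheses force a local lower bound on $V$, which the paper leaves implicit.
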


\begin{proof}
Observe that in the simpler case $V\in L^\infty_\loc(M)$, with no other assumptions, the result is a consequence of standard elliptic regularity theory. In fact, in this case, $u \in D(H^*)$ implies $\Delta_\omega u \in L^2_\loc(M)$, in the sense of distributions. Then, the result follows from \cite[Thm. 6.9]{Gregorio}, and taking in account the claim of \cite[p. 144]{Gregorio}. On the other hand, in the general case $V \in L^2_\loc(M)$, assumptions \eqref{eq:hypo-1} and \eqref{eq:hypo-2} imply \eqref{eq:ShubinASSA} with $a_K=0$. This guarantees $\dom( \op^*) \subset W^1_\loc(M)$, by \cite[Thm. 2.3]{BMS-manifolds}. 
\end{proof}


\begin{proof}[Proof of Theorem~\ref{t:main}]
We first prove the statement in the case $\nu \equiv 0$, in particular $V \geq 0$ for $\delta > \varepsilon'$. In this case, as shown in Proposition~\ref{prop:weak-hardy}, the operator $\op$ is semibounded. Thus, by a well-known criterion, $\op$ is essentially self-adjoint if and only if there exists $E <0$ such that the only solution of $\op^*\psi = E\psi$ is $\psi \equiv 0$ (see \cite[Thm. X.I and Corollary]{MR0493420}). This is guaranteed by the Agmon-type estimate of Proposition~\ref{p:Agmon}.

In order to complete the proof, notice that, for any $\lambda \geq 1$, the operator $-\Delta_\omega + V'$, with $V':= V + \lambda \nu^2$ falls in the previous case, hence it is essentially self-adjoint. Then, we conclude by Proposition~\ref{p:toglipotenziale}.

The compactness of the resolvent when $\hat{M}$ is compact is the result of Proposition~\ref{prop:compact-resolvent}.
\end{proof}

\begin{rmk}
Assumption \eqref{eq:hypo-2} can be relaxed by requiring that, for some $a \in [0,1)$ it holds $-a\Delta_\omega + V \geq -\nu^2$. However, in this case, the inclusion $D(H^*)\subset W^1_\loc(M)$ is not guaranteed by the arguments of Lemma~\ref{l:shubin} and must be enforced. Then, the proof of Theorem~\ref{t:main} is mostly unchanged, with minor modifications in step 2 in the proof of Proposition~\ref{p:Agmon}, and a straightforward extension of Proposition~\ref{p:toglipotenziale}.
\end{rmk}

\begin{prop}\label{prop:weak-hardy}
Let $(M,g)$ be a Riemannian manifold satisfying $(\star)$ for some $\varepsilon >0$. Let $\pot \in L^2_{\loc}(M)$. Assume that there exist $\kappa \geq 0$ and $\varepsilon' < \varepsilon$ such that,
	\begin{align}\label{eq:assumptionprop}
\Veff + \pot & \geq \frac{3}{4\delta^2}-\frac{\kappa}{\delta}, & \text{for $\delta \leq \varepsilon$}, \\
\pot & \geq 0, &  \text{for $\delta > \varepsilon'$}.
	\end{align}
Then, there exist $\eta \leq 1/\kappa$ and $c \in \R$ such that
	\begin{equation}\label{eq:weak-hardy}
\mathcal{E}(u)   \geq	\int_{M_{\eta}} \left(\frac{1}{\delta^2} - \frac{\kappa}{\delta} \right)|u|^2\,d\omega  +c \|u\|^2, \qquad \forall u\in W^1_\comp(M).
	\end{equation}
In particular, the operator $H=-\Delta_\omega+\pot$ is semibounded on $C^\infty_c(M)$.
\end{prop}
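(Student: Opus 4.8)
The goal is a weighted Hardy-type inequality for the Dirichlet form $\mathcal{E}$. The strategy is to first localize near the metric boundary, where the effective potential governs the behavior, and then patch with the region $\delta>\varepsilon'$ where $V\ge 0$. On the collar $M_\varepsilon\simeq(0,\varepsilon]\times X_\varepsilon$, Proposition~\ref{l:pot} gives $d\omega=e^{2\vartheta}\,dt\,d\mu$, and the point is that conjugating by $e^{\vartheta}$ turns the weighted one-dimensional problem into a flat one. Concretely, for $u\in C^\infty_c(M_\varepsilon)$ write $u=e^{-\vartheta}v$; then $\partial_t u=e^{-\vartheta}(\partial_t v-(\partial_t\vartheta)v)$, and integrating $|\partial_t u|^2 e^{2\vartheta}=|\partial_t v-(\partial_t\vartheta)v|^2$ in $t$ against $d\mu$, an integration by parts moves the cross term onto $-(\partial_t^2\vartheta)|v|^2$, producing exactly $|\partial_t v|^2+((\partial_t\vartheta)^2+\partial_t^2\vartheta)|v|^2=|\partial_t v|^2+\Veff|v|^2$. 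Thus
\[
\int_{M_\varepsilon}\!\big(|\nabla u|^2+\Veff|u|^2\big)\,d\omega\ \ge\ \int_{X_\varepsilon}\!\int_0^\varepsilon\!\big(|\partial_t v|^2-\Veff|v|^2+\Veff|v|^2\big)\,dt\,d\mu\ -\ (\text{boundary terms at }t=\varepsilon),
\]
wait — more carefully, the identity is $\int|\partial_t u|^2 e^{2\vartheta}dt=\int(|\partial_t v|^2+\Veff|v|^2)dt$ plus a boundary term $[-(\partial_t\vartheta)|v|^2]_0^\varepsilon$, which vanishes at $t=0^+$ if $u$ is compactly supported away from the boundary and can be absorbed at $t=\varepsilon$ into a lower-order term (this is where the constant $c$ and the restriction to some $\eta\le\varepsilon$ will enter). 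Combining with hypothesis \eqref{eq:assumptionprop}, $\Veff+V\ge \tfrac{3}{4\delta^2}-\tfrac\kappa\delta$, we are reduced to the scalar inequality $\int_0^\varepsilon|\partial_t v|^2\,dt\ge\int_0^\varepsilon\big(\tfrac14 t^{-2}-\kappa t^{-1}\big)|v|^2\,dt$ for $v\in C^\infty_c((0,\varepsilon))$ — since $\tfrac{3}{4\delta^2}-\tfrac14\delta^{-2}=\tfrac1{2\delta^2}$, actually we want to keep $\tfrac1{\delta^2}$ on the right, so the scalar input needed is the classical one-dimensional Hardy inequality $\int|v'|^2\ge\tfrac14\int t^{-2}|v|^2$, which gives $\int(|v'|^2+(\tfrac34 t^{-2}-\tfrac\kappa t)|v|^2)\ge\int(t^{-2}-\tfrac\kappa t)|v|^2$, matching \eqref{eq:weak-hardy}.

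**Carrying it out.** First I would record the one-dimensional Hardy inequality on $(0,\varepsilon)$: for $v\in C^\infty_c((0,\varepsilon))$, $\int_0^\varepsilon|v'(t)|^2\,dt\ge\frac14\int_0^\varepsilon\frac{|v(t)|^2}{t^2}\,dt$, proved by expanding $\int|v'+\tfrac{v}{2t}|^2\ge0$ and integrating $\int\tfrac{v v'}{t}=-\tfrac12\int\tfrac{|v|^2}{t^2}$ by parts (no boundary terms since $v$ is compactly supported in the open interval). Second, I would perform the substitution $u=e^{-\vartheta}v$ on the collar and carry out the integration by parts in $t$ described above to obtain, for $u\in C^\infty_c(M_\eta)$ with $\eta\le\varepsilon$,
\[
\int_{M_\eta}\big(|\nabla u|^2+(\Veff+V)|u|^2\big)\,d\omega\ \ge\ \int_{M_\eta}\frac{|u|^2}{\delta^2}\,d\omega
\]
— plus, if supports are not contained in the interior, a controlled boundary contribution at $\delta=\eta$. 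I would choose $\eta\le 1/\kappa$ so that $\tfrac1{\delta^2}-\tfrac\kappa\delta\ge 0$ on $M_\eta$, which will be needed to make the patching work. Third, the global statement for $u\in W^1_\comp(M)$: pick a cutoff $\chi$ equal to $1$ on $M_{\eta/2}$ and supported in $M_\eta$, split $u=\chi u+(1-\chi)u$, apply the collar estimate to $\chi u$ and use $V\ge 0$ on $\{\delta>\varepsilon'\}$ together with the IMS localization formula $\mathcal{E}(u)=\mathcal{E}(\chi u)+\mathcal{E}((1-\chi)u)-\||\nabla\chi|u\|^2-\cdots$; the commutator terms involving $|\nabla\chi|^2|u|^2$ are supported in the compact annulus $\eta/2\le\delta\le\eta$ and on the overlap with $\{\delta>\varepsilon'\}$, hence bounded by $C\|u\|^2$ for a suitable $c\in\R$ (possibly large negative). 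A density argument extends from $C^\infty_c$ to $W^1_\comp(M)$. Finally, semiboundedness of $H$ on $C^\infty_c(M)$ is immediate from \eqref{eq:weak-hardy} since $\tfrac1{\delta^2}-\tfrac\kappa\delta\ge-\tfrac{\kappa^2}{4}$ pointwise, so $\mathcal{E}(u)\ge(c-\tfrac{\kappa^2}{4})\|u\|^2$.

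**Main obstacle.** The delicate point is the low regularity: $\delta$ is only $C^2$, so $\vartheta$ is merely continuous in $x$ and the diffeomorphism $\phi$ only $C^1$, which means $u=e^{-\vartheta}v$ is not smooth and the integration by parts in $t$ must be justified distributionally. Here I would lean on Proposition~\ref{l:pot}: $\vartheta$ is smooth in $t$ with $2\partial_t\vartheta=\Delta_\omega\delta$ continuous and $2\partial_t^2\vartheta=(\Delta_\omega\delta)'$ continuous, so the fiberwise ($x$ fixed) computation is legitimate for a.e.\ $x$, and Fubini against $d\mu$ assembles it — exactly the mechanism already used in the proof of Proposition~\ref{l:pot}. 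The second mild subtlety is tracking the boundary term at $t=\eta$ and folding it, together with the cutoff commutators, into the single constant $c$; this is bookkeeping rather than a genuine difficulty, but one must be careful that $\eta$ is chosen (depending only on $\kappa$ and $\varepsilon$) before the constants are fixed, so the claim $\eta\le1/\kappa$ in the statement is honored.
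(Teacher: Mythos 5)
Your proposal is correct and follows essentially the same route as the paper: conjugate by $e^{\vartheta}$ on the collar so that the radial Dirichlet energy becomes $\int(|\partial_t v|^2+\Veff|v|^2)\,dt\,d\mu$, apply the one-dimensional Hardy inequality fibrewise via Fubini together with hypothesis \eqref{eq:assumptionprop}, and patch with the region $\{\delta>\varepsilon'\}$ (where $V\geq 0$) by an IMS localization, absorbing the cutoff errors and the annulus $\{\eta\leq\delta\leq\varepsilon\}$ into the constant $c$, with $\eta\leq 1/\kappa$. The only cosmetic point is that the IMS identity \eqref{eq:keyidentity} requires a quadratic partition $\phi_1^2+\phi_2^2=1$ rather than the linear splitting $u=\chi u+(1-\chi)u$ you wrote (otherwise uncontrolled cross terms in $\nabla u$ appear), and the paper sidesteps your boundary term at $t=\eta$ by applying the collar estimate only to $\phi_1u\in W^1_\comp(M_\varepsilon)$, which vanishes near $\{\delta=\varepsilon\}$.
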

\begin{proof} First we prove \eqref{eq:weak-hardy} for $u \in W^{1}_{\comp}(M_{\varepsilon})$, and with $\eta = \varepsilon$, possibly not satisfying $\eta \leq 1/\kappa$. Then, we extend it for $u \in W^1_\comp(M)$, choosing $\eta\le 1/\kappa$. 

\emph{Step 1.} Let $u \in W^{1}_{\comp}(M_{\varepsilon})$. By Lemma~\ref{l:dist}, we identify $M_\varepsilon \simeq (0,\varepsilon] \times X_\varepsilon$ in such a way that $\delta(t,x) = t$ for $(t,x) \in (0,\varepsilon] \times X_\varepsilon$. By Proposition~\ref{l:pot}, fixing a reference measure $d\mu$ on $X_\varepsilon$, we have
\begin{equation}
	d\omega = e^{2 \vartheta(t,x)} dt\, d\mu(x), \qquad \text{on } M_\varepsilon,
\end{equation}
for some function $\vartheta : M_\varepsilon \to \R$ smooth in $t$ and continuous in $x$, together with its derivatives w.r.t.\ $t$. Consider the unitary transformation $T :L^2(M_\varepsilon,d\omega)\to L^2(M_\varepsilon,dt\,d\mu )$ defined by $Tu = e^\vartheta u$. Letting $v=T u$, and integrating by parts yields
	\begin{equation}
			\mathcal{E}(u) \geq \int_{M_\varepsilon}\left( |\partial_t u|^2 + \pot |u|^2 \right) \,d\omega 
			=  \int_{M_\varepsilon} \bigg(|\partial_t v|^2 + \bigg(\underbrace{(\partial_t \vartheta)^2 + \partial^2_t \vartheta}_{= \Veff} +\pot\bigg) |v|^2\bigg) \, dt\,d\mu,
	\end{equation}
where the expression for $\Veff$ is in Proposition~\ref{l:pot}. Recall the 1D Hardy inequality:
\begin{equation}\label{eq:Hardy1D}
\int_0^\varepsilon |f'(s)|^2\, ds  \geq \frac{1}{4}\int_0^\varepsilon \frac{|f(s)|^2}{s^2}\,ds, \qquad \forall f \in W^{1}_\comp((0,\varepsilon)).
\end{equation}
Since $u\in W^1_\comp(M_\varepsilon)$ and $\vartheta$ is smooth in $t$, for a.e.\ $x \in X_\varepsilon$, the function $t \mapsto v(t,x)$ is in $W^{1}_\comp((0,\varepsilon))$ (see \cite[Thm. 4.21]{EGmeas}). Then, by using \eqref{eq:assumptionprop}, Fubini's Theorem and \eqref{eq:Hardy1D}, we obtain \eqref{eq:weak-hardy} for functions $u \in W^{1}_{\comp}(M_{\varepsilon})$ with $\eta = \varepsilon$ and $c=0$.
	
\emph{Step 2.}	Let $u \in W^1_\comp(M)$, and let $\chi_1,\chi_2$ be smooth functions on $[0,+\infty)$ such that
\begin{itemize}
\item $0 \leq \chi_i \leq 1$ for $i=1,2$;
\item $\chi_1 \equiv 1$ on $[0,\varepsilon']$ and $\chi_1 \equiv 0$ on $[\varepsilon,+\infty)$;
\item $\chi_2 \equiv 0$ on $[0,\varepsilon']$ and $\chi_2 \equiv 1$ on $[\varepsilon,+\infty)$;
\item $\chi_1^2+\chi_2^2 =1 $.
\end{itemize}
Consider the functions $\phi_i : M \to \R$ defined by $\phi_i:= \chi_i \circ \delta$. We have $\phi_1 \equiv 1$ on $M_{\varepsilon'}$, $M_{\varepsilon'} \subset \mathrm{supp}(\phi_1) \subseteq M_\varepsilon$, moreover $0\leq \phi_1 \leq 1$, and $\phi_1^2 + \phi_2^2 = 1$. Notice that $\phi_2 \equiv 1$ and $\phi_1 \equiv 0 $ on $M \setminus M_\varepsilon$, and so $\nabla \phi_i \equiv 0$ there. Moreover, since $|\nabla\delta| \leq 1$,
\begin{equation}
c_1=  \sup_{M} \sum_{i=1}^2|\nabla \phi_i|^2 \leq \sup_{[0,\varepsilon]} \sum_{i=1}^2 |\chi_i'|^2< +\infty.
\end{equation}
Since $\supp (\phi_2 u) \subseteq M \setminus M_{\varepsilon'}$, and recalling that $V \geq 0$ on $M \setminus M_{\varepsilon'}$, we have $\mathcal{E}(\phi_2 u) \geq 0$. By \eqref{eq:keyidentity} of Lemma~\ref{l:trick}, we obtain the following IMS-type formula:
\begin{align}
\mathcal{E}(u) = \sum_{i=1}^2 \mathcal{E}(\phi_i u) -\sum_{i=1}^2  \int_M |\nabla \phi_i|^2 |u|^2 d\omega 
 \geq \mathcal{E}(\phi_1 u) -  c_1 \|u\|^2.
\end{align}
In particular, applying the previously proven statement to $\phi_1 u \in W^1_\comp(M_\varepsilon)$, we get
\begin{align}
\mathcal{E}(u)  &\geq \int_{M_\varepsilon}\left(\frac{1}{\delta^2}-\frac{\kappa}{\delta}\right)|\phi_1 u|^2 d\omega - c_1\|u\|^2. 
\end{align}
Letting $\eta = \min\{\varepsilon',1/\kappa\}$, we have
\begin{align}
	\mathcal E(u)& \geq \int_{M_{\eta}} \left(\frac{1}{\delta^2}-\frac{\kappa}{\delta}\right)|u|^2 d\omega- \int_{M_{\varepsilon}\setminus M_{\eta}} \left\lvert\frac{1}{\delta^2}-\frac{\kappa}{\delta}\right\rvert |\phi_1 u|^2 d\omega -c_1\|u\|^2 \\
& \geq \int_{M_{\eta}} \left(\frac{1}{\delta^2}-\frac{\kappa}{\delta}\right)|u|^2 d\omega - \left(c_1 + \sup_{\eta \leq \delta \leq \varepsilon}\left\lvert \frac{1}{\delta^2} - \frac{\kappa}{\delta}\right\rvert \right) \|u\|^2,
\end{align}
which concludes the proof.
\end{proof}

\begin{prop}[Agmon-type estimate]\label{p:Agmon}
Assume that there exist $\kappa \geq 0$, $\eta \leq 1/\kappa$ and $c \in \R$ such that,
	\begin{equation}\label{eq:agmon}
\mathcal{E}(u) \geq  \int_{M_\eta} \left(\frac {1} {\delta^2} - \frac \kappa \delta\right)|u|^2 d\omega + c \|u\|^2 , \qquad \forall u \in W^{1}_\comp(M).
	\end{equation}
Then, for all $E < c$, the only solution of $\op^* \psi = E \psi$ is $\psi \equiv 0$.
\end{prop}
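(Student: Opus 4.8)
The plan is to exclude $L^2$ eigenfunctions below the form bottom, by an Agmon--Hardy argument of the type developed in \cite{Nenciu2008} (see also \cite{DeVerdiere2009a}). Fix $E<c$ and a $\psi\in D(H^*)$ with $H^*\psi=E\psi$; the goal is to show $\psi\equiv 0$. By Lemma~\ref{l:shubin} one has $\psi\in W^1_\loc(M)$, and this turns the identity $\langle Hu,\psi\rangle=E\langle u,\psi\rangle$, valid for $u\in C^\infty_c(M)$, into the weak form of the eigenvalue equation,
\[
\langle\nabla w,\nabla\psi\rangle+\langle Vw,\psi\rangle=E\langle w,\psi\rangle ,
\]
for every real Lipschitz $w$ with compact support in $M$, via Green's identity and a density argument (the low regularity $V\in L^2_\loc(M)$ is exactly what makes the regularity statement of Lemma~\ref{l:shubin} necessary for this pairing to be meaningful).

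Testing this against $w=\phi^2\psi$, for a real Lipschitz $\phi$ such that $\phi\psi\in W^1_\comp(M)$, and using the localization identity \eqref{eq:keyidentity} of Lemma~\ref{l:trick}, one obtains the virial identity
\[
\mathcal{E}(\phi\psi)=E\,\|\phi\psi\|^2+\int_M|\nabla\phi|^2\,|\psi|^2\,d\omega .
\]
Since $\phi\psi$ is an admissible test function in the hypothesis \eqref{eq:agmon}, and since $\eta\le 1/\kappa$ makes the weight nonnegative, $\tfrac1{\delta^2}-\tfrac{\kappa}{\delta}=\tfrac{1-\kappa\delta}{\delta^2}\ge0$ on $M_\eta$, comparing the two displays gives the basic inequality
\[
\int_M|\nabla\phi|^2\,|\psi|^2\,d\omega\ \ge\ (c-E)\,\|\phi\psi\|^2\ +\ \int_{M_\eta}\Big(\frac1{\delta^2}-\frac{\kappa}{\delta}\Big)\,|\phi\psi|^2\,d\omega ,
\]
with $c-E>0$. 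The whole point is now to feed into this a family of cutoffs $\phi$ tending to $1$ whose gradient energy $\int|\nabla\phi|^2|\psi|^2$ stays controlled by the Hardy term on the right.

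This is the delicate part, and it is where the constant $\tfrac34$ in \eqref{eq:hypo-1} is decisive: it promotes the Hardy constant $\tfrac14$ to the \emph{critical} value $1=\tfrac14+\tfrac34$ in front of $\delta^{-2}$ above, and the argument simply does not close for a smaller constant. Using the identification $M_\eta\simeq(0,\eta]\times X_\eta$ of Lemma~\ref{l:dist}, in which $\delta(t,x)=t$, I take $\phi$ to be a function of $\delta$ alone, multiplied by exhaustion cutoffs $\theta_j\uparrow 1$ of the region $\{\delta>\varepsilon'\}$; these pose no difficulty, since $\{\delta\ge\rho\}$ is metrically complete for every $\rho>0$ and hence carries exhaustion functions with bounded gradients tending pointwise to $0$, so the associated error terms vanish by dominated convergence. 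Following the cutoff scheme of \cite{Nenciu2008}, a first family of logarithmic-type cutoffs near the metric boundary — for which $|\nabla\phi|^2\le\beta\,\phi^2/\delta^2$ away from a thin collar, with $\beta\downarrow 0$ — absorbs the left-hand side into $\int_{M_\eta}\delta^{-2}|\phi\psi|^2$ and, through Fatou's lemma, yields the a priori regularity $\int_{M_\eta}\delta^{-2}|\psi|^2\,d\omega<+\infty$. With this in hand, a second family finishes the proof: for the ``tent'' cutoffs $\phi_\rho=\min(\delta/\rho,1)$ the Eikonal equation $|\nabla\delta|=1$ gives $|\nabla\phi_\rho|^2=\rho^{-2}\le\delta^{-2}$ on $\{\delta<\rho\}$, so $\int_M|\nabla\phi_\rho|^2|\psi|^2\,d\omega\le\int_{\{\delta<\rho\}}\delta^{-2}|\psi|^2\,d\omega\to0$ as $\rho\to0$, being the tail of a convergent integral; passing to the limit in the basic inequality, with monotone convergence in $\|\phi_\rho\psi\|^2\to\|\psi\|^2$, forces $(c-E)\|\psi\|^2\le0$, hence $\psi\equiv0$.

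The main obstacle is precisely the construction and limiting argument for the first family of cutoffs, that is, forcing the gradient energy to be absorbed by the \emph{critical} Hardy term; this is the geometric counterpart of the classical fact that $-\partial_t^2+c_0\,t^{-2}$ on $(0,\infty)$ is essentially self-adjoint exactly when $c_0\ge\tfrac34$. A secondary, purely technical burden is the justification of the virial identity and of the various integrations by parts under the standing low regularity ($V\in L^2_\loc$, $\delta$ only $C^2$, $\omega$ only transversally $C^1$).
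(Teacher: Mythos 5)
Your reduction to the basic inequality is exactly the paper's: the identity $\mathcal{E}(f\psi)=E\|f\psi\|^2+\langle\psi,|\nabla f|^2\psi\rangle$ from Lemma~\ref{l:trick} (justified via Lemma~\ref{l:shubin} and the exhaustion cutoffs $G_n$ at infinity), combined with \eqref{eq:agmon}, gives
\[
(c-E)\|f\psi\|^2+\int_{M_\eta}\Bigl(\frac{1}{\delta^2}-\frac{\kappa}{\delta}\Bigr)|f\psi|^2\,d\omega\;\le\;\langle\psi,|\nabla f|^2\psi\rangle
\]
for Lipschitz $f$ vanishing near the metric boundary. The gap is in what you feed into it. Your ``first family'' is required to satisfy $|\nabla\phi|^2\le\beta\,\phi^2/\delta^2$ away from a thin collar; writing $\phi=F(\delta)$ with $F(\eta)=1$, this means $|(\log F)'|\le\sqrt{\beta}/\tau$, hence $F(\tau)\ge(\tau/\eta)^{\sqrt{\beta}}$ down to the collar level $\zeta$. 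Since $\psi$ is only in $W^1_\loc(M)$, $\phi\psi$ must have compact support, so $F$ has to be cut to zero inside $\{\delta<\zeta\}$; the truncation then carries a gradient of size at least $F(\zeta)/\zeta\gtrsim\zeta^{\sqrt{\beta}-1}$, and the collar contribution to $\langle\psi,|\nabla\phi|^2\psi\rangle$ is of order $\zeta^{2\sqrt{\beta}-2}\int_{\{\delta\lesssim\zeta\}}|\psi|^2$. You know $\int_{\{\delta<\zeta\}}|\psi|^2\to0$ but with no rate, so for any fixed $\beta>0$ this term is not bounded uniformly in $\zeta$, Fatou cannot be invoked, and the a priori estimate $\int_{M_\eta}\delta^{-2}|\psi|^2\,d\omega<+\infty$ does not follow. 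Your second family (the tent cutoffs) both presupposes that estimate and itself needs the same truncation to be admissible, so the argument is open precisely at the point you flag as ``the main obstacle''.

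The paper closes this by refusing to give up any fraction $\beta$ of the Hardy weight: it takes $F$ on $[2\zeta,\eta]$ to be the \emph{exact} solution of $F'=\sqrt{1/\tau^2-\kappa/\tau}\,F$, $F(\eta)=1$ (equation \eqref{eq:costruzioneFAgmon}), so that $F'^2-(1/\delta^2-\kappa/\delta)F^2\equiv0$ in the bulk, and --- this is the decisive point --- this $F$ satisfies $F(\tau)=O(\tau)$ as $\tau\to0^+$. Consequently the linear truncation on $[\zeta,2\zeta]$ has slope bounded by a constant $K$ \emph{independent of} $\zeta$, the only surviving term is $K^2\int_{\{\zeta\le\delta\le2\zeta\}}|\psi|^2\to0$, and $\psi\equiv0$ follows in one step, with no intermediate weighted bound. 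Any subcritical choice ($\beta<1$ in your notation) destroys exactly this linear vanishing, which is why your scheme does not close; if you want to keep the two-step structure you must produce the estimate $\int_{M_\eta}\delta^{-2}|\psi|^2\,d\omega<+\infty$ by a genuinely different mechanism.
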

Notice that the requirement $\eta \leq 1/\kappa$ ensures the non-negativity of the integrand in \eqref{eq:agmon}. The proof of the above follows the ideas of \cite{Nenciu2008,DeVerdiere2009a}, via the following. 

\begin{lemma}\label{l:trick}
Let $f$ be a real-valued Lipschitz function. Let $u \in W^1_\loc(M)$, and assume that $f$ or $u$ have compact support $K \subset M$. Then, we have
\begin{equation}\label{eq:keyidentity}
\mathcal{E}(f u,fu) = \re \mathcal{E}(u,f^2 u ) + \langle u , |\nabla f|^2 u \rangle.
\end{equation}
Moreover, under the assumptions of Proposition~\ref{prop:weak-hardy}, if $\psi \in \dom(H^*)$ satisfies $H^* \psi = E \psi$, and $f$ is a Lipschitz function with compact support, we have
\begin{equation}\label{eq:keyidentity2}
\mathcal{E}(f \psi,f\psi) = E \| f \psi \|^2 + \langle \psi , |\nabla f|^2 \psi \rangle.
\end{equation}
\end{lemma}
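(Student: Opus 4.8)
The plan is to establish \eqref{eq:keyidentity} by a pointwise Leibniz computation, and then to deduce \eqref{eq:keyidentity2} from it together with a generalized Green's identity for $H^*$.

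\emph{The identity \eqref{eq:keyidentity}.} Since $f$ is Lipschitz we have $\nabla f\in L^\infty_\loc$, and for $u\in W^1_\loc(M)$ the Leibniz rule gives, almost everywhere, $\nabla(fu)=f\nabla u+u\nabla f$ and $\nabla(f^2u)=f^2\nabla u+2fu\nabla f$. Expanding the Hermitian square $g(\nabla(fu),\nabla(fu))$ and using that $f$ is real, one checks the pointwise (a.e.) identity $|\nabla(fu)|^2=\re\, g(\nabla u,\nabla(f^2u))+|\nabla f|^2|u|^2$. Moreover the potential contributions of $\mathcal E(fu,fu)$ and of $\re\,\mathcal E(u,f^2u)$ coincide identically, both being equal to $Vf^2|u|^2$ pointwise. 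Integrating over $M$ then yields \eqref{eq:keyidentity}: the assumption that $f$ or $u$ has compact support $K$ guarantees that the three gradient terms lie in $L^1(M)$ (each is supported in $K$ and dominated there by a product of an $L^2_\loc$ and an $L^\infty_\loc$ function), so that the rearrangement $\mathcal E(fu,fu)-\re\,\mathcal E(u,f^2u)=\langle u,|\nabla f|^2u\rangle$ is legitimate, the common — and, a priori, possibly infinite — potential term disappearing in the difference.

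\emph{Reduction of \eqref{eq:keyidentity2}.} Under the hypotheses of Proposition~\ref{prop:weak-hardy}, Lemma~\ref{l:shubin} gives $D(H^*)\subset W^1_\loc(M)$, so \eqref{eq:keyidentity} applies with $u=\psi$. Since $f^2\psi\in W^1_\comp(M)\subset L^2(M)$ and $H^*\psi=E\psi$, it then suffices to prove the generalized Green identity $\re\,\mathcal E(\psi,f^2\psi)=\re\,\langle H^*\psi,f^2\psi\rangle=E\|f\psi\|^2$. For $\phi\in C^\infty_c(M)$ one has, directly from the definitions and Green's identity, $\langle H^*\psi,\phi\rangle=\langle\psi,H\phi\rangle=\langle\nabla\psi,\nabla\phi\rangle+\langle V\psi,\phi\rangle=\mathcal E(\psi,\phi)$, where $V\psi\in L^1_\loc(M)$ by Cauchy--Schwarz; hence $\mathcal E(\psi,\phi)=E\langle\psi,\phi\rangle$ for such $\phi$, and in particular $-\Delta_\omega\psi=E\psi-V\psi\in L^1_\loc(M)$ distributionally. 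The task is thus to upgrade the admissible test function from $C^\infty_c(M)$ to $\phi=f^2\psi$.

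\emph{The main obstacle.} This upgrade is the crux of the lemma, and the difficulty is genuine: because $V$ is merely in $L^2_\loc(M)$, the map $\phi\mapsto\int_M V\psi\overline\phi\,d\omega$ is not continuous for $L^2$-type topologies, and $f^2\psi$ need not be bounded, so one cannot simply pass to the limit along smooth approximants of $f^2\psi$. I would proceed in two steps. First, extend $\mathcal E(\psi,\phi)=E\langle\psi,\phi\rangle$ to all $\phi\in W^1_\comp(M)\cap L^\infty(M)$: choosing $\phi_k\in C^\infty_c(M)$ with $\phi_k\to\phi$ in $W^1$ on a fixed compact neighbourhood of $\supp\phi$, uniformly bounded, and converging a.e., the gradient term passes to the limit by $L^2$-continuity while $\int V\psi\overline{\phi_k}\to\int V\psi\overline\phi$ by dominated convergence with dominating function $C|V\psi|\in L^1(M)$. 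Second, under the hypotheses of Proposition~\ref{prop:weak-hardy} the inequalities \eqref{eq:assumptionprop}, together with the continuity of $\Veff$, force $V$ to be bounded below on compact subsets of $M$; writing $V=V_++V_-$ with $V_+:=\max(V,0)\ge0$ and $V_-:=\min(V,0)\in L^\infty_\loc(M)$, apply the extended identity to $\phi=f^2\psi^{(m)}$, where $\psi^{(m)}$ truncates the real and imaginary parts of $\psi$ at level $m$, so that $\psi^{(m)}\to\psi$ in $W^1_\loc(M)$ with $|\psi^{(m)}|\le|\psi|$ and $\re(\psi\overline{\psi^{(m)}})\uparrow|\psi|^2$. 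Letting $m\to\infty$: the gradient term converges by $W^1$-continuity of multiplication by $f^2$; the $V_-$-part of the potential term converges by dominated convergence ($V_-$ being locally bounded); the $V_+$-part converges by monotone convergence (its integrand being nonnegative and nondecreasing in $m$); and the right-hand side $\re\bigl(E\langle\psi,f^2\psi^{(m)}\rangle\bigr)$ tends to $E\|f\psi\|^2$. This yields $\re\,\mathcal E(\psi,f^2\psi)=E\|f\psi\|^2$ — in particular finite — and, combined with \eqref{eq:keyidentity}, proves \eqref{eq:keyidentity2}. (When $V\in L^\infty_\loc(M)$ this truncation is unnecessary: there $\Delta_\omega\psi\in L^2_\loc(M)$, hence $\psi\in W^2_\loc(M)$ by elliptic regularity, and all integrations by parts are classical.)
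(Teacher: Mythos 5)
Your proposal is correct, and the two halves fare differently against the paper. For \eqref{eq:keyidentity} your pointwise Leibniz computation is essentially the paper's own argument (the paper carries it out at the level of the $L^2$ pairings rather than pointwise, which is immaterial); both treat the common, possibly infinite potential term in the same extended sense. For \eqref{eq:keyidentity2} you take a genuinely different route. The paper works in the duality between $W^{-1}_\loc(M)$ and $W^1_\comp(M)$: using Lemma~\ref{l:shubin} it notes $-\Delta_\omega\psi\in W^{-1}_\loc(M)$ and $V_-\psi\in W^{-1}_\loc(M)$ (since \eqref{eq:assumptionprop} forces $V_-\in L^\infty_\loc(M)$, exactly your observation), deduces $V_+\psi=H^*\psi+\Delta_\omega\psi-V_-\psi\in W^{-1}_\loc(M)$, and then invokes an external result (\cite[Lemma 8.4]{BMS-manifolds}) to identify the duality pairings $(V_\pm\psi,f^2\psi)$ with the corresponding Lebesgue integrals, which immediately gives $\mathcal{E}(\psi,f^2\psi)=\langle H^*\psi,f^2\psi\rangle$. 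You instead prove the same identification by hand: first enlarging the class of admissible test functions from $C^\infty_c(M)$ to bounded elements of $W^1_\comp(M)$ via mollification and dominated convergence (legitimate because $V\psi\in L^1_\loc$), then truncating $\psi$ and passing to the limit with monotone convergence on the $V_+$-term and dominated convergence on the $V_-$-term; the finiteness of $\int V_+f^2|\psi|^2\,d\omega$ then falls out of the finiteness of the remaining terms. What your approach buys is a self-contained argument that replaces the citation and makes the integrability of the positive part of the potential term explicit; what the paper's approach buys is brevity, at the cost of importing the nontrivial content of the cited lemma (which is, in essence, proved by the very truncation argument you spell out). Both proofs rest on the same two structural inputs: $D(H^*)\subset W^1_\loc(M)$ and the local boundedness from below of $V$ under the hypotheses of Proposition~\ref{prop:weak-hardy}.
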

\begin{proof}
Observe that $fu \in W^1_\comp(M)$. By using the fact that $f$ is real-valued, a straightforward application of Leibniz rule yields
\begin{align}
\langle  \nabla u, \nabla (f^2 u)\rangle & = \langle  f\nabla u, \nabla (f u)\rangle  + \langle \nabla u, f u \nabla f  \rangle \\
& = \langle \nabla (fu), \nabla (fu)\rangle - \langle u \nabla f,\nabla (fu) \rangle+ \langle \nabla u, f u \nabla f  \rangle \\
& = \langle \nabla (fu), \nabla (fu)\rangle - \langle u \nabla f,u\nabla f \rangle- \langle u \nabla f, f\nabla u \rangle +\langle f \nabla u,  u \nabla f  \rangle  \\
& = \langle \nabla (fu), \nabla (fu)\rangle - \langle u, |\nabla f|^2 u\rangle + 2 i \im \langle f \nabla u,  u \nabla f  \rangle.
\end{align}
Thus, by definition of $\mathcal{E}$, we have
\begin{align}
\re \mathcal{E}(u,f^2 u) & =  \langle \nabla (fu), \nabla (fu)\rangle + \langle V u, f^2u\rangle  - \langle u, |\nabla f|^2 u \rangle \\
& = \mathcal{E}(fu, fu) - \langle u, |\nabla f|^2 u \rangle,
\end{align}
completing the proof of \eqref{eq:keyidentity}.

To prove \eqref{eq:keyidentity2}, recall that $W^{-1}_\loc(M)$ is the dual of $W^1_\comp(M)$. We denote the duality with the symbol $(u,v)$ where $u \in W^{-1}_\loc(M) $ and $v \in W^1_\comp(M)$. By Lemma \ref{l:shubin}, $\dom(H^*) \subset W^1_\loc(M)$, then $-\Delta_\omega u \in W^{-1}_\loc(M)$, in the sense of distributions. Decompose $V = V_+ + V_-$ in its positive and negative parts. By \eqref{eq:assumptionprop}, $V_-  \in L^\infty_\loc(M)$, and so $V_- u \in W^{-1}_\loc(M)$. Thus, 
\begin{equation}
V_+ u = H^*u + \Delta_\omega u -V_- u\in W^{-1}_\loc(M).
\end{equation}
By applying \cite[Lemma 8.4]{BMS-manifolds} to $V_+$ and $-V_-$, respectively, we have\footnote{Observe that if $v \in W^{-1}_\loc(M) \cap L^1_\loc(M)$ and $u \in W^1_\comp(M)$, then it can happen that $vu$ is not in $L^1_\comp(M)$, and thus the integral $\langle v, u \rangle = \int \bar{v}u \,d \omega$ can fail to be well defined, even though $(v, u)$ is well defined by the duality, in particular $(v,u) := \lim_n \langle v,u_n\rangle$ for some sequence $u_n \to u$ in the $W^1$ topology. The content of \cite[Lemma 8.4]{BMS-manifolds} is that, if $v=Au$, with $A \geq 0$, and $Au \in W^{-1}_\loc(M)$, then $(Au,u) = \lim_n \langle Au,u_n\rangle = \langle Au,u\rangle$.}
\begin{equation}
(V u,u) = \int_{\supp(w)} V \overline{u} u\, d \omega = \langle V u, u\rangle.
\end{equation}
Thus, since $(Vfu,fu) = (Vu,f^2u)$, we finally obtain
\begin{align}
\mathcal{E}(u,f^2 u) & =  \langle \nabla u , \nabla (f^2 u) \rangle  + \langle V u, f^2 u \rangle \\
& = (-\Delta_\omega u, f^2 u) + (V u, f^2 u) \\
& = \langle H^* u, f^2 u \rangle .
\end{align}
Setting $u= \psi$, we obtain $\mathcal{E}(\psi,f^2 \psi) = E \|f \psi\|^2$, yielding the statement.
\end{proof}

\begin{proof}[Proof of Proposition~\ref{p:Agmon}]

Let $f: M \to \R$ be a bounded Lipschitz function with $\supp f \subset \overline{M \setminus M_\zeta}$, for some $\zeta >0$, and $\psi$ be a solution of $(H^* - E) \psi = 0$ for some $E <c $. We start by claiming that
\begin{equation}\label{eq:agmondfirststep}
(c-E) \|f \psi\|^2 \leq \langle \psi, |\nabla f |^2 \psi \rangle - \int_{M_\eta} \left(\frac{1}{\delta^2} - \frac{\kappa}{\delta}\right) |f \psi|^2 d\omega.
\end{equation}
If $f$ had compact support, then $f \psi \in W^1_\comp(M)$, and hence \eqref{eq:agmondfirststep} would follow directly from \eqref{eq:agmon} and \eqref{eq:keyidentity2}. To prove the general case, let $\theta : \R \to \R$ be the function defined by
\begin{equation}
\theta(s) = \begin{cases}
1 & s \leq 0, \\
1-s & 0\leq s \leq 1, \\
0 & s \geq 1.
\end{cases}
\end{equation}
Fix $q \in M$ and let $G_n : M \to \R$ defined by $G_n(p) = \theta(d_g(q,p) - n )$. Notice that $G_n$ is Lipschitz, with $|\nabla G_n | \leq 1$ and $\supp(G_n) \subseteq \bar{B}_q(n+1)$. Observe that
\begin{equation}\label{eq:compactnesssupport}
\supp G_n f \subseteq \overline{(M\setminus M_\zeta) \cap B_q(n+1)}.
\end{equation}
Even if $(M,d)$ is a non-complete metric space (and hence, its closed balls might fail to be compact), the set on the right hand side of \eqref{eq:compactnesssupport} is compact, being uniformly separated from the metric boundary. This can be proved with the same argument of \cite[Prop. 2.5.22]{BBI}. Hence, the support of $f_n:= G_n f$ is compact, and \eqref{eq:agmondfirststep} holds with $f_n$ in place of $f$.
The claim now follows by dominated convergence. Indeed, $f_n \to f$ point-wise as $n \to +\infty$ and $f_n \leq f$. Hence $\|f_n \psi \| \to \|f \psi \|$. Thus, since $\supp f_n \subset \overline{M \setminus M_\zeta}$, we have
\begin{equation}
\lim_{n \to +\infty} \int_{M_\eta} \left(\frac{1}{\delta^2} - \frac{\kappa}{\delta}\right) |f_n \psi|^2\, d\omega= \int_{M_\eta} \left(\frac{1}{\delta^2} - \frac{\kappa}{\delta}\right) |f \psi|^2\, d\omega.
\end{equation}
Finally, since $|\nabla f_n | \leq C$, and $\nabla f_n \to \nabla f$ a.e.\ we have $\langle \psi, |\nabla f_n |^2 \psi \rangle \to \langle \psi, |\nabla f |^2 \psi \rangle$, yielding the claim.

We now plug a particular choice of $f$ into \eqref{eq:agmondfirststep}. Set
\begin{equation}\label{eq:functin}
f(p) := \begin{cases}
F(\delta(p)) &  0 < \delta(p) \leq \eta, \\
1 & \delta(p) > \eta ,
\end{cases}
\end{equation}
where $F$ is a Lipschitz function to be chosen later. Recall that $|\nabla\delta| \leq 1$ a.e. on $M$. In particular, on $M_\eta$, we have $|\nabla f| = |F'(\delta)| |\nabla\delta | \leq |F'(\delta)|$. Thus, by \eqref{eq:agmondfirststep}, we have
\begin{equation}\label{eq:agmondsecondstep}
(c-E) \|f \psi\|^2 \leq  \int_{M_\eta} \left[F'(\delta)^2 -\left(\frac{1}{\delta^2} - \frac{\kappa}{\delta}\right) F(\delta)^2 \right]|\psi|^2 d\omega .
\end{equation}
Let now $0<2\zeta <\eta$. We choose $F$ for $\tau \in [2\zeta,\eta]$ to be the solution of
\begin{equation}\label{eq:costruzioneFAgmon}
F'(\tau) = \sqrt{\frac{1}{\tau^2}- \frac{\kappa}{\tau}} F(\tau), \qquad \text{with } F(\eta) = 1,
\end{equation}
to be zero on $[0,\zeta]$, and linear on $[\zeta,2\zeta]$, see Fig.~\ref{f:function}. Observe that the assumption $\eta \leq 1/\kappa$ implies that the above equation is well defined. One can check that the global function defined by \eqref{eq:functin} is Lipschitz with support contained in $\overline{M\setminus M_\zeta}$. Moreover, explicit computations yield that $F' \leq K$ on $[\zeta,2\zeta]$, for some constant independent of $\zeta$. Indeed, if $\kappa = 0$, the claim is trivial. Assuming $\kappa >0$, the solution to \eqref{eq:costruzioneFAgmon}, on the interval $[2\zeta,\eta]$, is
\begin{equation}
F(\tau)=C(\kappa,\eta) \frac{1-\sqrt{1-\kappa  \tau}}{1+\sqrt{1-\kappa  \tau}}e^{2 \sqrt{1-\kappa  \tau}}, \qquad \tau \in[2\zeta,\eta],
\end{equation}
for a constant $C(\kappa,\eta)$ such that $F(\eta) = 1$. By construction of $F$ on $[\zeta,2\zeta]$, we obtain
\begin{equation}
F'(\tau) = \frac{F(2\zeta)}{\zeta}, \qquad \tau \in [\zeta,2\zeta].
\end{equation}
We have $F(2\zeta) = \tfrac{1}{2} C(\kappa,\eta) e^2 \kappa \zeta + o(\zeta)$, which yields the boundedness of $F'$ on $[\zeta,2\zeta]$ by a constant not depending on $\zeta$. Thus, by \eqref{eq:agmondsecondstep},
\begin{figure}
\includegraphics[width=.8\textwidth]{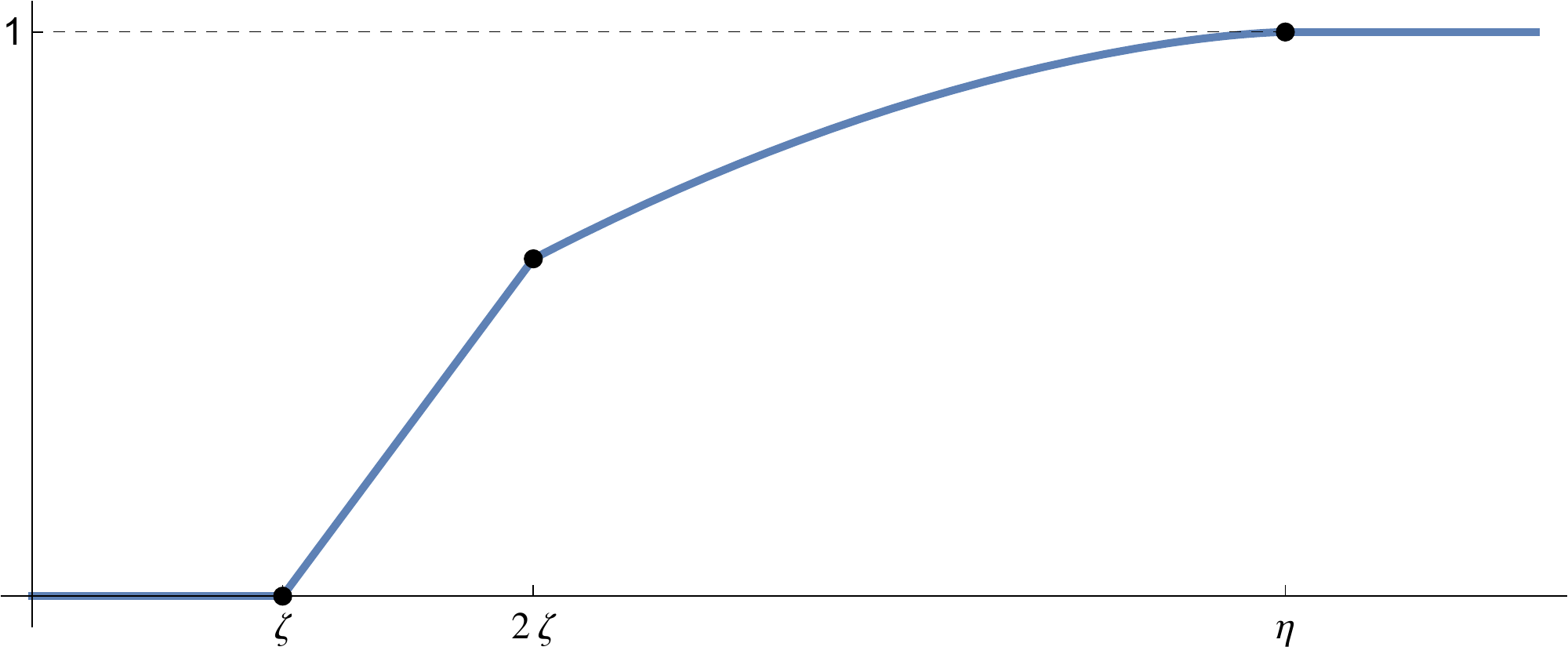}
\caption{Plot of the function $F(\tau)$. Compare with \cite[Fig. 4.1]{DeVerdiere2009a}.}\label{f:function}
\end{figure}
\begin{equation}\label{eq:laststep}
(c-E)\|f \psi \|^2 \leq K^2 \int_{ \zeta \leq \delta \leq 2\zeta} |\psi|^2 d \omega .
\end{equation}
If we let $\zeta \to 0$, then $f$ tends to an almost everywhere strictly positive function. Recalling that $E<c$, and taking the limit, equation \eqref{eq:laststep} implies $ \psi \equiv 0$.
\end{proof}

\begin{prop}\label{p:toglipotenziale}
Let $\nu : M \to \R$ be a non-negative Lipschitz function with Lipschitz constant $L>0$. Assume that $V \in L^2_\loc(M)$ satisfies $V \geq - \nu^2$. Then, if for some $\lambda \geq \max\{1+4L^2,2\}$, the operator $-\Delta_\omega + V + \lambda \nu^2$ with domain $C^\infty_c(M)$ is essentially self-adjoint, the same holds for $\op=-\Delta_\omega + \pot$.
\end{prop}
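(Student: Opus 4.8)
The plan is to obtain the essential self-adjointness of $H$ from that of $H_\lambda := -\Delta_\omega + V + \lambda\nu^2$ by a commutator argument of Nelson type (cf.\ \cite[Thm.\ X.37]{MR0493420}). Since $V \geq -\nu^2$ and $\lambda \geq 2$, one has $V + \lambda\nu^2 \geq (\lambda-1)\nu^2 \geq 0$, hence the unique self-adjoint extension $\overline{H_\lambda}$ is non-negative; put $N := \overline{H_\lambda} + 1 \geq 1$, which is self-adjoint and admits $C^\infty_c(M)$ as a core. The strategy is: (1) extend $H$ by continuity from $C^\infty_c(M)$ to a symmetric operator $\tilde H$ on $D(N)$; (2) verify that $\tilde H$ satisfies, relative to $N$, the two bounds required by the commutator theorem; (3) conclude that $\tilde H$ — hence $H$, which has the same closure — is essentially self-adjoint on any core of $N$, in particular on $C^\infty_c(M)$. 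A bonus of this route is that it handles the general $V \in L^2_\loc(M)$ without any a priori regularity input, since one only ever tests on $C^\infty_c(M)$.

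For step (2) I would first record the form bounds, valid for $u \in C^\infty_c(M)$ and immediate from $V + \lambda\nu^2 \geq (\lambda-1)\nu^2 \geq 0$: namely $\|\nu u\|^2 \leq \tfrac{1}{\lambda-1}\langle u, H_\lambda u\rangle$ and $\|\nabla u\|^2 \leq \langle u, H_\lambda u\rangle$. Then, using \emph{only} that $\nu$ is Lipschitz with constant $L$, so $|\nabla\nu| \leq L$ a.e.\ and $|\nabla(\nu^2)| = 2\nu|\nabla\nu| \leq 2L\nu$ — note $\nu$ is never differentiated twice, so no regularity beyond Lipschitz is needed: (a) testing $H_\lambda u$ against $\nu^2 u$, integrating by parts once, and using $2L\nu|u||\nabla u| \leq \nu^2|\nabla u|^2 + L^2|u|^2$ yields $(\lambda-1)\|\nu^2 u\|^2 \leq \|H_\lambda u\|\,\|\nu^2 u\| + L^2\|u\|^2$, whence $\|\nu^2 u\| \leq \tfrac{1}{\lambda-1}\|H_\lambda u\| + \tfrac{L}{\sqrt{\lambda-1}}\|u\|$, and therefore $\|Hu\| \leq \|H_\lambda u\| + \lambda\|\nu^2 u\| \leq C\|Nu\|$; (b) since $H - H_\lambda = -\lambda\nu^2$ is multiplication by a real function and $H$ is symmetric on $C^\infty_c(M)$, one computes $(Hu, Nu) - (Nu, Hu) = 2i\lambda\,\im\langle Hu, \nu^2 u\rangle = 2i\lambda\,\im\langle \nabla u, u\,\nabla(\nu^2)\rangle$, so $|(Hu, Nu) - (Nu, Hu)| \leq 4\lambda L\,\|\nu u\|\,\|\nabla u\| \leq \tfrac{4\lambda L}{\sqrt{\lambda-1}}\,\langle u, N u\rangle$. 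Bound (a) legitimizes the extension $\tilde H$ of step (1) and gives $\|\tilde H v\| \leq C\|Nv\|$ for all $v \in D(N)$; bound (b) then passes to $D(N)$ by density, and the commutator theorem applies.

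In this route, any $\lambda > 1$ (with $\lambda \geq 2$ kept only to make $\lambda-1 \geq 1$) already suffices, so I expect the explicit threshold $\lambda \geq 1 + 4L^2$, i.e.\ $2L/\sqrt{\lambda-1} \leq 1$, to be what is needed for a more self-contained, Agmon-style proof: e.g.\ an argument showing directly that a deficiency vector $\psi$ of $H$ satisfies $\nu\psi \in L^2(M)$ and $\psi \in W^1(M)$ (so that $\langle H^*\psi,\psi\rangle = \mathcal{E}(\psi)$ is real, forcing $\psi = 0$), by feeding into the cutoff identity $\mathcal{E}(f\psi) = \langle\psi, |\nabla f|^2\psi\rangle$ of Lemma~\ref{l:trick} a test function $f = \phi\cdot\chi(\nu)$ that damps $\nu$; there the constant $2L/\sqrt{\lambda-1}$ controls whether the term $\lambda\|\nu f\psi\|^2$ arising on the right can be absorbed into $\mathcal{E}_\lambda(f\psi) \geq (\lambda-1)\|\nu f\psi\|^2$ on the left. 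I expect the main obstacle to be precisely this absorption and the attendant bookkeeping — controlling the contribution of the region where $\nu$ is large, uniformly in the cutoff parameters — while the form bounds, the quadratic-inequality trick for (a), and the invocation of the commutator theorem are routine; and the essential self-adjointness of $H_\lambda$ (as opposed to merely $H_\lambda \geq 0$) enters exactly through the fact that $C^\infty_c(M)$ is a core for $N$, which is what makes the conclusion descend to $C^\infty_c(M)$.
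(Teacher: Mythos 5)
Your argument is correct and is essentially the paper's own proof: both invoke the commutator theorem \cite[Thm.\ X.37]{MR0493420} with the comparison operator $N=-\Delta_\omega+V+\lambda\nu^2+\mathrm{const}$, whose core is $C^\infty_c(M)$ precisely because $H_\lambda$ is essentially self-adjoint, and verify the two required bounds there using only $|\nabla \nu|\le L$ and $V\ge -\nu^2$. The only difference lies in the elementary estimates used to check those bounds (your quadratic-inequality bound for $\|\nu^2 u\|$ and the direct Cauchy--Schwarz bound for the commutator indeed work for every $\lambda>1$, whereas the paper's absorption arguments are what force $\lambda\ge\max\{1+4L^2,2\}$, a hypothesis that is in any case available); the closing speculation about an Agmon-style route is not needed.
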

\begin{proof}
By our assumptions, $\op + \lambda \nu^2\geq 0$. Then, for $\mu \geq 1$ to be fixed later, consider the essentially self-adjoint operator $N := \op + \lambda \nu^2 + \mu \geq 1$, with $\dom(N) = C^\infty_c(M)$. Then, by \cite[Thm.\ X.37]{MR0493420}, it suffices to prove that, for some $C,D\geq 0$, and all $u \in C^\infty_c(M)$,
	\begin{gather}
		\|\op u\|^2 \le C\|Nu\|^2, \label{eq:reed-1-new} \\ 
		| \im \langle \op u, N u\rangle |  \leq D \langle N u,u\rangle. \label{eq:reed-2-new}
	\end{gather}
By \eqref{eq:keyidentity} of Lemma~\ref{l:trick}, letting $L$ be the Lipschitz constant of $\nu$, we have for all $u \in C^\infty_c(M)$,
\begin{align}
\re \mathcal{E}(u,\nu^2 u) & = \mathcal{E}(\nu u ,\nu u) - \langle u, |\nabla \nu|^2 u\rangle \\
& \geq -\|\nu^2 u \|^2 - L^2 \|u\|^2,
\end{align}
where we used the fact that $H = -\Delta_\omega + V \geq -\nu^2$. Hence, 
\begin{align}
\|N u \|^2 & = \| (\op + \lambda \nu^2 + \mu )u \|^2 \\
& =  \| (\op + \lambda \nu^2 )u \|^2  + \mu^2 \|u\|^2 + 2 \mu\re \langle (H + \lambda \nu^2 ) u , u \rangle \\
& \geq \| \op  u \|^2 + \lambda^2 \| \nu^2 u \|^2 + 2 \lambda \re \langle H u, \nu^2 u \rangle + \mu^2 \|u\|^2 \\
& = \| \op  u \|^2 + \lambda^2 \| \nu^2 u \|^2 + 2 \lambda \re \mathcal{E}(u,\nu^2 u) + \mu^2 \|u\|^2 \\
& \geq \| \op u \|^2 + \lambda(\lambda -2) \| \nu^2 u \|^2 + (\mu^2 - 2L^2\lambda)\|u\|^2  \geq \| \op u \|^2,
\end{align}
where, in the last inequality, we fixed $\mu \geq 1$ such that $\mu^2 \geq 2 L^2 \lambda$. This proves \eqref{eq:reed-1-new} with $C=1$. To prove \eqref{eq:reed-2-new}, observe that
\begin{equation}
\begin{aligned}
0  \leq \|\nabla u \pm i u \nabla \nu^2 \|^2  & = \|  \nabla u  \|^2 + 4\langle u, \nu^2 |\nabla \nu |^2 u \rangle \pm 2 \im \langle \nabla u \cdot \nabla \nu^2, u \rangle \\
& = \mathcal{E}(u,u)  - \langle u, Vu \rangle + 4\langle u,  \nu^2 |\nabla \nu |^2 u \rangle \pm 2 \im \langle \nabla u \cdot \nabla \nu^2, u \rangle \\
& \leq \mathcal{E}(u,u) + \| \nu u \|^2 +  4\langle u, \nu^2 |\nabla \nu |^2 u \rangle \pm  \im \mathcal{E}(u,\nu^2 u), \label{eq:inequality-im}
\end{aligned}
\end{equation}
where, in the last passage, we used the same computations as in the proof of the first part of Lemma~\ref{l:trick}. Recalling that $N= H+\lambda\nu^2+\mu$, we have
\begin{align}
\im \langle \op u, N u\rangle  =  \lambda \im \langle Hu, \nu^2 u \rangle = \lambda\im \mathcal{E}(u,\nu^2 u).
\end{align}
Hence, using \eqref{eq:inequality-im}, and the fact that $|\nabla \nu| \leq L$, we obtain
\begin{align}
\frac{1}{\lambda} | \im \langle \op u, N u\rangle | & \leq \mathcal{E}(u,u) + \| \nu u \|^2 +  4 \langle u,  \nu^2 |\nabla \nu |^2 u \rangle \\
& \leq  \langle N u,u\rangle - (\lambda -1 - 4L^2)\| \nu u\|^2 -  \mu \|u \|^2 \leq  \langle N u,u\rangle,
\end{align}
where we used the assumption on $\lambda$. Hence \eqref{eq:reed-2-new} holds with $D=\lambda$.
\end{proof}
\subsection{Compactness of the resolvent}

To prove the last part of Theorem~\ref{t:main}, it is sufficient to show that there exists $z \in \R$ such that the resolvent $(H^*-z)^{-1}$ on $L^2(M)$ is compact. In fact, by the first resolvent formula \cite[Thm.\ VIII.2]{MR0493419}, and since compact operators are an ideal of bounded ones, this implies the compactness of $(H^*-z)^{-1}$ for all $z$ in the resolvent set. Furthermore if $\hat{M}$ is compact, then $H$ is semibounded, that is
\begin{equation}
\langle H u, u \rangle \geq - \sup_{q \in M} \nu^2(q)\, \|u \|^2, \qquad \forall u \in \dom(\op).
\end{equation}
It is well known that the spectrum of bounded operators with compact resolvent consists of discrete eigenvalues with finite multiplicity \cite[Thm. XIII.64]{MR0493421}. Thus, the proof of Theorem~\ref{t:main} is concluded by the following proposition.

\begin{prop}\label{prop:compact-resolvent}
Let $\hat{M}$ be compact. Under the assumptions of Theorem~\ref{t:main} there exists $z \in \R$ such that the resolvent $(H^*-z)^{-1}$ on $L^2(M)$ is compact, where $H^*= \bar{H}$ is the unique self-adjoint extension of $H$.
\end{prop}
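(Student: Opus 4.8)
The plan is to prove that the unique self-adjoint extension $\bar H = H^*$ has compact resolvent by showing that $(H^*-z)^{-1}$ is compact for a single $z\in\R$ below $\spec(H^*)$; such a $z$ exists because, as recalled right before the statement, $H$ is semibounded when $\hat M$ is compact, and compactness for one such $z$ propagates to the whole resolvent set by the first resolvent identity. Concretely I would fix a bounded sequence $(g_n)_n\subset L^2(M)$, set $u_n := (H^*-z)^{-1}g_n\in\dom(H^*)\subset W^1_\loc(M)$ (the inclusion being Lemma~\ref{l:shubin}), and show that $(u_n)_n$ has a subsequence converging in $L^2(M)$. Here $\|u_n\|$ is bounded and, identifying $\mathcal{E}$ with the closed quadratic form of $H^*$ (the closure of $\mathcal{E}|_{C^\infty_c(M)}$, whose associated operator is the Friedrichs extension $=\bar H$ by essential self-adjointness), one has $\mathcal{E}(u_n) = \langle H^*u_n,u_n\rangle = \langle g_n,u_n\rangle + z\|u_n\|^2$, which is bounded as well; so $(u_n)_n$ is bounded in the form domain $Q(H)$, the closure of $C^\infty_c(M)$ under $(\mathcal{E}(\cdot)+C\|\cdot\|^2)^{1/2}$.

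Next I would establish \emph{tightness near the metric boundary}. Applying Proposition~\ref{prop:weak-hardy} to the potential $V+\nu^2$ — which satisfies the hypothesis ``$\ge 0$ for $\delta>\varepsilon'$'' since $V\ge-\nu^2$ there — yields $\eta\le 1/\kappa$ and $c\in\R$ with $\mathcal{E}'(u)\ge\int_{M_\eta}(\delta^{-2}-\kappa\delta^{-1})|u|^2\,d\omega+c\|u\|^2$ for all $u\in W^1_\comp(M)$, where $\mathcal{E}':=\mathcal{E}+\int\nu^2|\cdot|^2$. Since on $M_\eta$ (with $\eta\le 1/\kappa$) the weight $\delta^{-2}-\kappa\delta^{-1}$ is non-negative, this inequality extends to all of $Q(H)=Q(H+\nu^2)$ by Fatou's lemma and density of $C^\infty_c(M)$. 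As $\nu$ is bounded ($\hat M$ is compact), this gives a constant $C$ with $\mathcal{E}(u)+C\|u\|^2\ge\int_{M_\eta}(\delta^{-2}-\kappa\delta^{-1})|u|^2\,d\omega$ on $Q(H)$; after shrinking $\eta$ so that $\delta^{-2}-\kappa\delta^{-1}\ge\tfrac12\delta^{-2}$ on $M_\eta$, one deduces that for every $r\le\eta$, $\int_{M_r}|u_n|^2\,d\omega\le r^2\int_{M_r}\delta^{-2}|u_n|^2\,d\omega\le 2r^2(\mathcal{E}(u_n)+C\|u_n\|^2)\le c'r^2$ uniformly in $n$. Hence $\sup_n\int_{M_r}|u_n|^2\,d\omega\to 0$ as $r\to 0^+$.

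The heart of the argument is \emph{local compactness away from the metric boundary}. Fix $r\in(0,\eta)$ and choose $\chi=\chi_r\in C^\infty(M)$ with $0\le\chi\le 1$, $\chi\equiv 1$ on $\{\delta\ge r\}$ and $\chi\equiv 0$ on $M_{r/2}$, so that $\supp\chi\subset K_r:=\{\delta\ge r/2\}$; the point is that $K_r$ is compact, being closed in the compact space $\hat M$ and contained in $M$ — this is exactly where the assumption on $\hat M$ enters. Combining the IMS-type identity \eqref{eq:keyidentity} with the identity $\mathcal{E}(u_n,\chi^2u_n)=\langle H^*u_n,\chi^2u_n\rangle$ established inside the proof of Lemma~\ref{l:trick} (valid since $\chi$ is Lipschitz with compact support and $u_n\in\dom(H^*)$), I get $\mathcal{E}(\chi u_n)=\re\langle g_n+zu_n,\chi^2u_n\rangle+\langle u_n,|\nabla\chi|^2u_n\rangle$, which is bounded (the bound depending on $r$ through $\sup|\nabla\chi|$). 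Since $V$ is bounded below on the compact set $K_r$ — on $K_r\cap M_\varepsilon$ by \eqref{eq:hypo-1} and continuity of $\Veff$ and $\nu$, and on $K_r\setminus M_\varepsilon\subset\{\delta>\varepsilon'\}$ by \eqref{eq:hypo-2} and boundedness of $\nu$ — I may write $\|\nabla(\chi u_n)\|^2=\mathcal{E}(\chi u_n)-\int_{K_r}V|\chi u_n|^2\,d\omega\le\mathcal{E}(\chi u_n)+c_r\|u_n\|^2$, so $(\chi u_n)_n$ is bounded in $W^1(M)$ with all supports in the fixed compact $K_r$. By Rellich--Kondrachov (applied in finitely many coordinate charts covering $K_r$), a subsequence of $(\chi u_n)_n$ converges in $L^2(M)$, and since $\chi\equiv 1$ on $\{\delta\ge r\}$, along this subsequence $(u_n)_n$ converges in $L^2(\{\delta\ge r\})$.

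Finally, applying the previous step with $r=1/k$, $k\in\N$, and performing a diagonal extraction, I obtain a subsequence $(u_{n_j})_j$ converging in $L^2(\{\delta\ge 1/k\})$ for every $k$. Given $\epsilon>0$, tightness lets me pick $k$ with $4\sup_n\int_{M_{1/k}}|u_n|^2\,d\omega<\epsilon$, then $J$ with $\int_{\{\delta\ge 1/k\}}|u_{n_i}-u_{n_j}|^2\,d\omega<\epsilon/2$ for $i,j\ge J$; splitting $M=M_{1/k}\cup\{\delta\ge 1/k\}$ gives $\|u_{n_i}-u_{n_j}\|^2<\epsilon$, so $(u_{n_j})_j$ is Cauchy, hence convergent, in $L^2(M)$. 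This shows $(H^*-z)^{-1}$ is compact, completing the proposition and, as noted before the statement, the remaining assertions of Theorem~\ref{t:main}. I expect the local compactness step to be the main obstacle: since $V$ may tend to $-\infty$ at the metric boundary (the strongly singular regime), there is no global control on the Dirichlet energy $\int_M|\nabla u_n|^2$, and it is the localization via \eqref{eq:keyidentity} together with $\mathcal{E}(u,f^2u)=\langle H^*u,f^2u\rangle$ that confines the dangerous part of $V$ to a compact set, where it becomes harmless.
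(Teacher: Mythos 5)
Your proof is correct and follows essentially the same strategy as the paper's: the weak Hardy inequality of Proposition~\ref{prop:weak-hardy} yields the uniform smallness of $\int_{M_r}|u_n|^2\,d\omega$ as $r\to 0^+$, the localization via Lemma~\ref{l:trick} together with the lower-boundedness of $V$ on compact subsets away from the metric boundary gives a uniform $W^1$ bound and hence $L^2$-compactness there by Rellich, and a diagonal extraction concludes. The differences are only cosmetic: you use a single cutoff and form-domain/Fatou language where the paper uses a partition of unity $\phi_1+\phi_2=1$ and graph-norm approximation by $C^\infty_c(M)$ functions.
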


\begin{proof}
Under the assumptions of Theorem~\ref{t:main}, and thanks to the compactness of $\hat{M}$, we have $V \geq - \sup \nu^2 > - \infty$. Hence, the conclusion of Proposition \ref{prop:weak-hardy} holds. That is, there exists a constant $c \in \R$, $\kappa\geq 0$, and $0<\eta\leq 1/\kappa$ such that
\begin{equation}\label{eq:rappello}
\mathcal{E}(u) \geq \int_{M_\eta} \left(\frac{1}{\delta^2}-\frac{\kappa}{\delta}\right) |u|^2 \,d\omega + c \|u\|^2, \qquad \forall u \in W^1_\comp(M).
\end{equation}
In particular, Proposition \ref{p:Agmon} and the fact that $H^*$ is self-adjoint, yield that for all $z<c$, the resolvent $(H^*-z)^{-1}$ is well defined on $L^2(M)$, with $\|(H^*-z)^{-1}\| \leq 1/(c-z)$. 

In order to prove the compactness of $(H^*-z)^{-1}$, we need two regularity properties of functions $u \in D(H^*) \subset W^1_\loc(M)$, respectively close and far away from the metric boundary. Let $\chi_1,\chi_2$ be real valued Lipschitz functions on $[0,+\infty)$ such that
	\begin{itemize}
	\item $0 \leq \chi_i \leq 1$ for $i=1,2$;
	\item $\chi_1 \equiv 1$ on $[0,\eta/2]$ and $\chi_1 \equiv 0$ on $[\eta,+\infty)$;
	\item $\chi_2 \equiv 0$ on $[0,\eta/2]$ and $\chi_2 \equiv 1$ on $[\eta,+\infty)$;
	\item they interpolate linearly elsewhere.
	\end{itemize}
	Consider the Lipschitz functions $\phi_i:= \chi_i \circ \delta$. Notice that $\phi_1 + \phi_2 = 1$. 

Since $\hat{M}$ is compact, the support of $\phi_2$ is compact in $M$. Hence we are in the setting of Lemma~\ref{l:trick}, and we obtain
\begin{equation}
\mathcal{E}(\phi_2 u, \phi_2 u)  = \re \mathcal{E}(u, \phi_2^2 u) + \langle u,|\nabla \phi_2|^2 u \rangle = \re \langle H^*u, \phi^2_2 u \rangle + \langle u,|\nabla \phi_2|^2 u \rangle.
\end{equation}
In particular, letting $\psi = (H^*-z)u \in L^2(M)$, we obtain,
\begin{align}
\mathcal{E}(\phi_2 u, \phi_2 u) & = z \|\phi_2 u\|^2 + \re \langle \psi, \phi^2_2 u\rangle + \langle u,|\nabla \phi_2|^2 u \rangle \\
& \leq z \|u\|^2 + \|\psi\| \|u\| + 4\|u\|^2 / \eta^2, \label{eq:boundenergia}
\end{align}
where we used the fact that $|\nabla \phi_2| \leq |\chi_2^\prime| |\nabla \delta| \leq 2/\eta$. Notice also that, since $\phi_2 u \in W^1_\comp(M)$, and $\phi_2 \equiv 0$ on $M_{\eta/2}$, we have
\begin{equation}
\mathcal{E}(\phi_2 u,\phi_2 u) = \int_{M \setminus M_{\eta/2}} \left( | \nabla (\phi_2 u)|^2 + \pot |\phi_2 u|^2 \right) \,d\omega. \label{eq:energiaW1}
\end{equation}
As we already mentioned, our assumptions on the potential, and the compactness of $\hat{M}$, imply that $\pot \geq - K$ for some constant $K \geq 0$. Hence, \eqref{eq:boundenergia} and \eqref{eq:energiaW1} imply
\begin{equation}\label{eq:lontanodalbordo}
\int_{M \setminus M_{\eta/2}} | \nabla (\phi_2 u)|^2 \leq z \|u\|^2 + \|\psi\| \|u\| + 4\|u\|^2/\eta^2 + K \|u\|^2.
\end{equation}

We turn now to $\phi_1 u$. Let $u_k \in C^\infty_c(M)$ such that $\|H^* (u_k-u)\| +\|u_k - u\| \to 0$. This is possible since $H$ is essentially self-adjoint, hence $H^* = \bar{H}$. In particular $\mathcal{E}(u_k,u_k) = \langle H^* u_k,u_k\rangle \to \langle H^*u,u\rangle$. From \eqref{eq:rappello}, we have
\begin{align}
\int_{M_\eta} |u_k|^2 \, d\omega  & = \int_{M_{\eta}} \frac{\delta^2}{1-\delta\kappa} \frac{1-\delta\kappa}{\delta^2}|u_k|^2 \, d\omega \\
& \leq  \frac{\eta^2}{1-\eta \kappa} \int_{M_\eta}\left(\frac{1}{\delta^2} - \frac{\kappa}{\delta}\right) |u_k|^2 \, d \omega \\
& \leq  \frac{\eta^2}{1-\eta \kappa} \left( \mathcal{E}(u_k,u_k) - c \|u_k\|^2\right) = \frac{\eta^2}{1-\eta \kappa} \left( \langle H^* u_k,u_k\rangle - c \|u_k\|^2\right).
\end{align}
By taking the limit, and recalling that $(H^*-z)u = \psi$, we obtain,
\begin{equation}
\int_{M_\eta} |u|^2 \, d\omega \leq \frac{\eta^2}{1-\eta \kappa} \left(\langle H^* u,u\rangle - c\|u\|^2\right)= \frac{\eta^2}{1-\eta \kappa} \left((z-c) \|u\|^2 + \langle \psi,u\rangle\right).
\end{equation}
In particular, recalling that $\phi_1 \leq 1$, we obtain,
\begin{equation}\label{eq:vicinoalbordo}
\int_{M_{\eta}} |\phi_1 u|^2 d \omega \leq \int_{M_{\eta}} |u|^2 d \omega \leq \frac{\eta^2}{1-\eta \kappa} \left((z-c) \|u\|^2 + \|\psi\|\|u\|\right).
\end{equation}

We are now ready to prove that $(H^*-z)^{-1}$ is compact, with $z<c$. Let $\psi_n$ be a bounded sequence in $L^2(M)$, say $\|\psi_n\|\leq (c-z)$, and $u_n =(H^*-z)^{-1}  \psi_n \in D(H^*)$. By the boundedness of the resolvent, $\|u_n\| \leq 1$.  Let $u_n = u_{n,1} + u_{n,2}$, with $u_{n,i}:= \phi_{n,i} u_n$.

Equation \eqref{eq:lontanodalbordo} applied to $u=u_n$, $\psi=\psi_n$, implies
\begin{equation}
\|u_{n,2}\|^2_{W^1(M)} = \int_{M \setminus M_{\eta/2}} | \nabla u_{n,2}|^2\, d\omega + \|u_{n,2}\|^2  \leq c + 4/\eta^2 + K + 1.
\end{equation}
That is, $u_{n,2}$ is bounded in $W^1(M)$. Moreover, by construction, $u_{n,2} \in W^1_\comp(\Omega) \subset W^1_0(\Omega)$, where $W^1_0(\Omega)$ denotes the closure of $C^\infty_c(\Omega)$ in $W^1(\Omega)$, and $\Omega \Subset M$ is a relatively compact open subset. Then, by compact embedding of $W^1_0(\Omega)$ in $L^2(\Omega)$ \cite[Cor. 10.21]{Gregorio}, we have that $u_{n,2}$ converges,  up to extraction, in $L^2(\Omega)$, and thus in $L^2(M)$.

On the other hand, \eqref{eq:vicinoalbordo}, and taking in account the mentioned bounds (recall that $z<c$), imply that for some constant $C$ independent of $\eta$, we have
\begin{equation}
\|u_{n,1}\|^2 = \int_{M_{\eta}} |u_{n,1}|^2\, d \omega \leq \frac{\eta^2}{1-\eta \kappa} 2(c-z) \leq C \eta^2,
\end{equation}

Since $\eta$ in \eqref{eq:rappello} can be arbitrarily small, say $\tilde{\eta}_k^2 = 1/k$, we actually proved that for all $k \in \N$, there is a subsequence $n \mapsto \gamma_k(n)$ such that $u_{\gamma_k(n)} = \sum_{i=1}^2 u_{\gamma_k(n),i}$ with $\|u_{\gamma_k(n),1}\| \leq C/k$ and $u_{\gamma_k(n),2}$ convergent in $L^2(M)$. Exploiting this fact, we build a Cauchy subsequence of $u_n$, yielding the compactness of $(H^*-z)^{-1}$, and concluding the proof. 

\begin{figure}[t]
\includegraphics{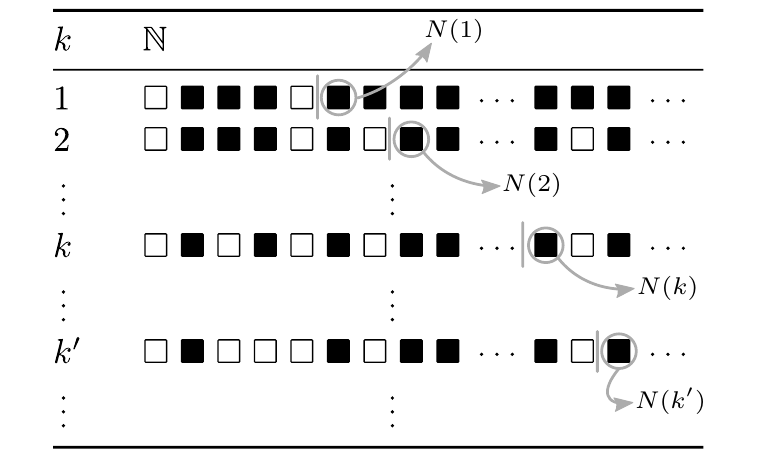}
\caption{Extraction of the sequence $n \mapsto \nu(n)$. Black and white squares denote respectively the available elements and the deleted ones.}\label{t:table}
\end{figure}

To this purpose, we build an infinite table as in Figure~\ref{t:table}. In the zeroth line, put all natural numbers, in order, from the left to the right, representing the original sequence. Recursively, each next line is a copy of the previous one, leaving an empty space corresponding to the elements that do not belong to $S_k:= \gamma_k(S_{k-1})$, with $S_0:= \N$. We obtain an infinite table where each line is a non-empty and infinite subsequence of the previous ones, and the $k$-th line represents the $\gamma_k$-th subsequence of the $k-1$-th one. 

Let $\mu: \N \times \N \to \N$ be the map
\begin{equation}
\mu(k,n) = \text{n-th element appearing in the $k$-th line}.
\end{equation}
For $k \in \N$, consider the cutoff functions $\phi_{k,i}$, with $i=1,2$, built as above with the choice $\tilde{\eta}^2_k = 1/k$. In particular, $\phi_{k,1}$ is supported in $M_{\tilde\eta_k}$, and $\phi_{k,2}$ is supported in $M_{\tilde\eta_k/2}$. Moreover, $\phi_{k,1} + \phi_{k,2} = 1$. Let $u_{\mu(k,n),i}:= u_{\mu(k,n)} \phi_{k,i}$. The localization close to the metric boundary, $u_{\mu(k,n),1}$, satisfies
\begin{equation}
\|u_{\mu(k,n),1} \| \leq C \tilde\eta_k = C/k.
\end{equation}
The localization away from the metric boundary, $u_{\mu(k,n),2}$, is Cauchy in $L^2(M)$. In particular, there exists $N(k)$ such that for all $n,m \geq N(k)$, the difference $\|u_{\mu(k,m),2} - u_{\mu(k,n),2} \| \leq 1/k$. Without loss of generality, assume that $k \mapsto N(k)$ is non-decreasing. Thus, let $\nu: \N \to \N$ the subsequence
\begin{equation}
\nu(n):= \mu(n,N(n)).
\end{equation}
We claim that $u_{\nu(n)}$ is a Cauchy sequence in $L^2(M)$. In fact, assume $k' \geq k$. Indeed, by construction of the table, $\nu(k)$ and $\nu(k')$ both appear in the $k$-th line of the aforementioned table, and $\nu(k') \geq \nu(k) \geq N(k)$. Hence, by definition of $N(k)$,
\begin{equation}
\|u_{\nu(k),2} - u_{\nu(k'),2}\| \leq 1/k.
\end{equation}
Moreover, again since $\nu(k)$ and $\nu(k')$ both appear in the $k$-th line of the table
\begin{equation}
\|u_{\nu(k),1} - u_{\nu(k'),1} \| \leq 2C/k. 
\end{equation}
In particular, since $u_{\nu(k)} = \sum_{i=1}^2 u_{\nu(k),1}$, we have
\begin{equation}
\| u_{\nu(k)} - u_{\nu(k')} \| \leq (2C+1)/k, \qquad \forall k'\geq k.
\end{equation}
This proves that the subsequence $u_{\nu(n)}$ is Cauchy in $L^2(M)$.
\end{proof}
\section{Measure confinement}\label{s:degorder}

In this section, we prove essential self-adjointness results in presence of a singular or degenerate measure, and we discuss some examples where our techniques either do or do not apply. In particular, we set $V \equiv 0$, that is $H= - \Delta_\omega$. As usual, we work under the assumption $(\star)$, and we identify $M_{\varepsilon} \simeq (0,\varepsilon] \times X_\varepsilon$. Moreover, we fix a reference measure $d\mu(x)$ on $X_\varepsilon$, the choice of which is irrelevant. 


\begin{theorem}[Pure measure confinement I]\label{t:nonconst-ord}
	Let $(M,g)$ be a Riemannian manifold satisfying $(\star)$ for some $\varepsilon>0$.
	Let $\omega$ be a smooth measure of the form
	\begin{equation}\label{eq:degorder-nonconst}
		d\omega(t,x) = t^{a(x)} e^{2\phi(t,x)}\,dt\,d\mu(x),\qquad (t,x)\in(0,\varepsilon]\times X_\varepsilon,
	\end{equation}
where $d\mu(x)$ is a reference measure on $X_\varepsilon$. Assume that $a(x)\le -1$ or $a(x)\ge 3$ for any $x\in X_\varepsilon$ and that there exists $\kappa\ge 0$ such that
	\begin{equation}\label{eq:phi}
		a(x) \partial_t\phi + t\left((\partial_t \phi)^2 + \partial_t^2\phi\right) \ge -\kappa, \qquad \forall t\leq \varepsilon.
	\end{equation}
	Then, $\Delta_\omega$ with domain $C^\infty_c(M)$ is essentially self-adjoint on $L^2(M)$.
\end{theorem}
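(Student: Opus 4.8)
The plan is to derive the result as an immediate consequence of Corollary~\ref{c:main-nopot-intro}, so the entire task reduces to computing the effective potential $\Veff$ from the prescribed form of $\omega$ and verifying the inequality $\Veff \geq \tfrac{3}{4\delta^2}-\tfrac{\kappa}{\delta}$ near the metric boundary.

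First I would use Lemma~\ref{l:dist} to fix the identification $M_\varepsilon \simeq (0,\varepsilon]\times X_\varepsilon$ with $\delta(t,x)=t$, and rewrite \eqref{eq:degorder-nonconst} in the form $d\omega = e^{2\vartheta(t,x)}\,dt\,d\mu(x)$ of Proposition~\ref{l:pot}, with
\[
\vartheta(t,x) = \tfrac{1}{2}a(x)\log t + \phi(t,x)
\]
(up to an additive function of $x$ alone, coming from a possible change of reference measure, which is irrelevant by the remark following Proposition~\ref{l:pot}). The regularity demanded by Proposition~\ref{l:pot} — $\vartheta$ smooth in $t$, continuous in $x$ together with its $t$-derivatives — is inherited from the smoothness of $\omega$ and the smoothness of the flow lines of $\nabla\delta$, and it is precisely what makes $\Veff$ a well-defined continuous function on $M_\varepsilon$.

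Then, from $\Veff = (\partial_t\vartheta)^2 + \partial_t^2\vartheta$ (Proposition~\ref{l:pot}) and
\[
\partial_t\vartheta = \frac{a(x)}{2t} + \partial_t\phi, \qquad \partial_t^2\vartheta = -\frac{a(x)}{2t^2} + \partial_t^2\phi,
\]
a short computation gives
\[
\Veff = \frac{a(x)\big(a(x)-2\big)}{4t^2} + \frac{1}{t}\Big(a(x)\,\partial_t\phi + t\big((\partial_t\phi)^2 + \partial_t^2\phi\big)\Big).
\]
Hypothesis \eqref{eq:phi} bounds the bracketed quantity below by $-\kappa$, and the assumption $a(x)\ge 3$ or $a(x)\le -1$ is exactly the statement $a(x)(a(x)-2) = (a(x)-3)(a(x)+1)+3 \ge 3$ for every $x\in X_\varepsilon$. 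Combining these, $\Veff \ge \tfrac{3}{4t^2}-\tfrac{\kappa}{t} = \tfrac{3}{4\delta^2}-\tfrac{\kappa}{\delta}$ for $\delta\le\varepsilon$, and Corollary~\ref{c:main-nopot-intro} yields the essential self-adjointness of $\Delta_\omega$.

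I do not expect a genuine obstacle here: once Proposition~\ref{l:pot} is in hand the argument is elementary algebra. The two points deserving a word of care are (i) that the splitting $\vartheta = \tfrac12 a(x)\log t + \phi$ has the regularity needed to feed into Proposition~\ref{l:pot}, which follows from smoothness of $\omega$, and (ii) that $a$ is \emph{not} assumed continuous — this causes no trouble, since the lower bound on $\Veff$ is established pointwise in $(t,x)$, which is all Corollary~\ref{c:main-nopot-intro} requires, while the continuity of $\Veff$ itself is supplied by Proposition~\ref{l:pot}.
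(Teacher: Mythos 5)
Your proposal is correct and follows essentially the same route as the paper: identify $\vartheta(t,x)=\tfrac{a(x)}{2}\log t+\phi(t,x)$, compute $\Veff=(\partial_t\vartheta)^2+\partial_t^2\vartheta=\tfrac{a(a-2)}{4t^2}+\tfrac{a}{t}\partial_t\phi+(\partial_t\phi)^2+\partial_t^2\phi$, and use \eqref{eq:phi} together with $a(a-2)\ge 3$ to conclude via the main criterion (the paper invokes Theorem~\ref{t:main} directly, which is the same as your Corollary~\ref{c:main-nopot-intro} since $V\equiv 0$). Your side remarks on regularity and on the pointwise nature of the bound are sound and do not alter the argument.
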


\begin{proof}
Recall that $\delta(t,x) = t$. Then, by Proposition~\ref{l:pot} we obtain, with $\vartheta(t,x) = \tfrac{a(x)}{2} \log t + \phi(t, x)$,
	\begin{align}
	\Veff  & = (\partial_t \vartheta)^2 + (\partial_t^2 \vartheta) \\
	& =\frac{a(x)^2-2a(x)}{4t^2} + \frac{a(x)}{t} \partial_t\phi + (\partial_t \phi)^2 + \partial_t^2\phi \\
	& \ge \frac{3}{4t^2} -\frac{\kappa}{t},
	\end{align}
	where we used \eqref{eq:phi}. The statement now follows from Theorem~\ref{t:main}. 
\end{proof}

Assumption \eqref{eq:phi} is verified whenever $X_\varepsilon$ is compact and $\phi(t,x)$ (which, in general, is smooth in $t$ and continuous in $x$ with all its derivatives w.r.t.\ $t$) can be extended to a function with the same regularity on the compact set $[0,\varepsilon]\times X_\varepsilon$. In particular, we obtain the following straightforward consequence, which is Theorem~\ref{t:degorder-intro} of the introduction.

\begin{theorem}[Pure measure confinement II]\label{t:degorder}
Assume that the Riemannian manifold $(M,g)$ satisfies $(\star)$ for $\varepsilon >0$. Moreover, let $\omega$ be a smooth measure such that there exists $a\in\R$ and a reference measure $\mu$ on $X_\varepsilon$ for which
	\begin{equation}\label{eq:porwer-ord}
		d\omega(t,x) = t^a\,dt\,d\mu(x),\qquad (t,x)\in(0,\varepsilon] \times X_\varepsilon.
	\end{equation}	
	Then, $\Delta_\omega$ with domain $C^\infty_c(M)$ is essentially self-adjoint in $L^2(M)$ if $a\ge 3$ or $a\le -1$.
\end{theorem}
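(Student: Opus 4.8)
The plan is to deduce this directly from Theorem~\ref{t:nonconst-ord}, of which it is the special case obtained by taking the exponent constant, $a(x)\equiv a$, and $\phi\equiv 0$. First I would observe that the measure \eqref{eq:porwer-ord} is literally of the form \eqref{eq:degorder-nonconst} with these choices; here $\phi\equiv 0$ is trivially smooth in $t$ and continuous in $x$ together with all its $t$-derivatives, as required. Next I would check assumption \eqref{eq:phi}: with $\phi\equiv 0$ its left-hand side is $a\cdot 0 + t(0+0) = 0$, which is $\ge -\kappa$ for every $\kappa\ge 0$, so one may take $\kappa=0$. The remaining hypothesis of Theorem~\ref{t:nonconst-ord}, namely $a\le -1$ or $a\ge 3$, is exactly what is assumed, so Theorem~\ref{t:nonconst-ord} applies and yields the essential self-adjointness of $\Delta_\omega$ on $C^\infty_c(M)$.

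Alternatively, one can argue in a self-contained way straight from Theorem~\ref{t:main}. By Proposition~\ref{l:pot}, writing $d\omega = e^{2\vartheta}\,dt\,d\mu$ one has $\vartheta(t,x) = \tfrac{a}{2}\log t$ up to an additive function of $x$ coming from the (irrelevant) choice of reference measure $\mu$, which by the remark after Proposition~\ref{l:pot} does not affect $\Veff$. Hence $\partial_t\vartheta = a/(2t)$, $\partial_t^2\vartheta = -a/(2t^2)$, and
\[
\Veff = (\partial_t\vartheta)^2 + \partial_t^2\vartheta = \frac{a^2}{4t^2} - \frac{a}{2t^2} = \frac{a(a-2)}{4\delta^2}.
\]
Since $a(a-2)\ge 3$ is equivalent to $(a-3)(a+1)\ge 0$, i.e.\ to $a\ge 3$ or $a\le -1$, under the hypothesis we get $\Veff\ge \tfrac{3}{4\delta^2}$ on $M_\varepsilon$. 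Thus \eqref{eq:hypo-1} holds with $\pot\equiv 0$, $\nu\equiv 0$ and, say, $\kappa=0$, while \eqref{eq:hypo-2} is immediate since $\pot\equiv 0\ge 0$. Theorem~\ref{t:main} then gives the claim (and, if $\hat M$ is compact, compactness of the resolvent, though that is not asserted here).

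There is no genuine obstacle in this argument: all the analytic work — the one-dimensional Hardy inequality, the Agmon-type estimate, and the intrinsic formula for $\Veff$ — has already been carried out in Section~\ref{s:main} and in Proposition~\ref{l:pot}. The only points to keep an eye on are that the bound $a(a-2)\ge 3$ correctly matches the stated dichotomy (which it does, via the factorization $(a-3)(a+1)$), and that the choice of $\mu$ on $X_\varepsilon$ is immaterial, which is guaranteed by the invariance of $\Veff$ noted after Proposition~\ref{l:pot}.
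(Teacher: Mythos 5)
Your proposal is correct and matches the paper's approach: the paper presents this theorem as a ``straightforward consequence'' of Theorem~\ref{t:nonconst-ord}, obtained exactly as in your first paragraph by taking $a(x)\equiv a$ and $\phi\equiv 0$, for which assumption \eqref{eq:phi} holds trivially with $\kappa=0$. Your alternative direct computation of $\Veff=\tfrac{a(a-2)}{4\delta^2}$ and the factorization $(a-3)(a+1)\ge 0$ is just an unwinding of that same argument and is also correct.
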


The next example shows that, in general, assumption \eqref{eq:phi} must be checked carefully.

\begin{example}\label{ex:non-constant}
	Consider a measure $\omega$ given on $M_\varepsilon$ by the expression
	\begin{equation}
		d\omega(t,x) = t^m(t^\ell+f(x))^k\,dt\,d\mu(x), \qquad m,k\in\R, \, \ell \geq 0,
	\end{equation}
	for some reference measure $d\mu(x)$ on $X_\varepsilon$ and a smooth $f \geq 0$ attaining the value zero on a proper, non-empty subset of $X_\varepsilon$. This is of the form \eqref{eq:degorder-nonconst}, with
	\begin{equation}
		a(x) = \begin{cases}
			m+k\ell & f(x) = 0,\\
			m & f(x) \neq 0,
		\end{cases}
		\qquad
		\phi(t,x) = \begin{cases}
			0 & f(x) = 0, \\
			\frac k2\log\left( t^\ell + f(x)\right) & f(x) \neq 0.
		\end{cases}
	\end{equation}
In order to check assumption~\eqref{eq:phi}, let $R(t,x) := a(x) \partial_t\phi + t\left((\partial_t \phi)^2 + \partial_t^2\phi\right)$. We have,
	\begin{equation}
R(t,x) = \begin{cases} 
0 & f(x) = 0, \\
\frac{k\ell t^{\ell-1} \left((k\ell+2 m-2) t^\ell+2 (\ell+m-1) f (x)\right)}{4 \left(t^\ell+f (x)\right)^2} & f(x) \neq 0.
\end{cases}
	\end{equation}
To check assumption \eqref{eq:phi}, we consider two particular cases.
\begin{enumerate}
\item[1.] $m\ge 3$ and $k\geq 0$. In this case, $a(x)\ge 3$. Then, one can check that $R(t,x)\ge0$ for all $x \in X_\varepsilon$. Thus, by Theorem~\ref{t:nonconst-ord}, the operator $\Delta_\omega$ is essentially self-adjoint.
\item[2.] $m\le-1$ and $k<0$. In this case, $a(x)\le-1$, and the applicability of Theorem~\ref{t:nonconst-ord} depends in a crucial way on the relation between $m$ and $\ell$:
	\begin{itemize}
	\item If $\ell \le 1-m$, then $R(t,x)\ge0$, and assumption~\eqref{eq:phi} is satisfied. In particular, by Theorem~\ref{t:nonconst-ord}, the operator $\Delta_\omega$ is essentially self-adjoint.
	\item If $\ell > 1-m$, then along any sequence $(t_i,x_i)$ such that $t_i=1/i$ and $f(x_i) = 1/i^\ell$, we have that $R(t_i,x_i)\to -\infty$. Hence, we cannot apply Theorem~\ref{t:nonconst-ord}.
	\end{itemize}
\end{enumerate}	
\end{example}
\section{Applications to strongly singular potentials}\label{s:singular}

In this section we prove Theorem~\ref{t:quattro-autori-intro}, regarding the essential self-adjointness of a Schr\"o\-din\-ger operator $H=-\Delta +V$, where $\Delta=\Delta_{\vol_g}$ is the Laplace-Beltrami operator, and whose potential is singular along submanifolds of arbitrary dimension. We restate it here for the reader's convenience.

\begin{theorem}[Kalf-Walter-Schmincke-Simon for Riemannian submanifolds]
Let $(N,g)$ be a $n$-dimensional, complete Riemannian manifold. Let $\mathcal{Z}_i \subset N$, with $i \in I$, be a finite collection of embedded, compact $C^2$ submanifolds of dimension $k_i$ and denote by $d(\cdot,\mathcal{Z}_i)$ the Riemannian distance from $\mathcal{Z}_i$. Let $\pot \in L^2_\loc(N \setminus \mathcal{Z}_i)$ be a \emph{strongly singular potential}. That is, there exists $\varepsilon>0$ and a non-negative Lipschitz function $\nu : N \to \R$, such that,
\begin{enumerate}
	\item[(i)]  for all $i\in I$ and $p \in N$ such that $0< d(p,\mathcal{Z}_i)\leq \varepsilon$, we have
	\begin{equation}\label{eq:conditionsingular}
		\pot(p) \geq -\frac{(n-k_i)(n-k_i-4)}{4 d(p,\mathcal{Z}_i)^2} - \frac{\kappa}{d(p,\mathcal{Z}_i)}- \nu(p)^2, \qquad \kappa\geq 0;
	\end{equation}
	\item[(ii)] for all $p \in N$ such that $d(p,\mathcal{Z}_i) \geq \varepsilon$ for all $i \in I$, we have
	\begin{equation}
		\pot(p) \geq - \nu(p)^2,
	\end{equation}
\end{enumerate}
Then, the operator $H = -\Delta + \pot$ with domain $C^\infty_c(M)$ is essentially self-adjoint in $L^2(M)$, where $M = N \setminus \bigcup_{i} \mathcal{Z}_i$, or any one of its connected components.
\end{theorem}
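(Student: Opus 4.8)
The plan is to deduce the statement from the main criterion, Theorem~\ref{t:main}, applied to $M = N \setminus \bigcup_{i} \mathcal{Z}_i$. Since $\op = -\Delta + \pot$ and all the hypotheses decompose along the connected components of $M$, it suffices to treat $M$ itself, the assertion for a single component then following by restriction. So I only need to verify $(\star)$ and the two inequalities \eqref{eq:hypo-1}--\eqref{eq:hypo-2} of Theorem~\ref{t:main}.

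First I would identify the metric boundary. Because $N$ is complete, a Cauchy sequence in $M$ converges in $N$, and if its limit is not in $M$ it lies in $\bigcup_i \mathcal{Z}_i$; conversely (each $\mathcal{Z}_i$ having positive codimension, the full-dimensional case being trivial) every point of $\bigcup_i \mathcal{Z}_i$ is approached from within $M$. Moreover, for $p \in M$ close to some $\mathcal{Z}_{i_0}$, a unit-speed minimizing $N$-geodesic from $p$ to $\mathcal{Z}_{i_0}$ meets $\mathcal{Z}_{i_0}$ only at its endpoint and, if $p$ is close enough, stays in $M$ otherwise; together with the trivial bound $\hat d(p,q) \ge d(p,q)$ for $q \in \mathcal{Z}_i$ this gives $\delta(p) = d(p,\mathcal{Z}_{i_0})$ near $\mathcal{Z}_{i_0}$. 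Each $\mathcal{Z}_i$ being compact and $C^2$, it has a tubular neighborhood on which $d(\cdot,\mathcal{Z}_i)$ is $C^2$; choosing $\varepsilon$ smaller than the radii of all these tubes and than $\tfrac13 \min_{i\ne j} d(\mathcal{Z}_i,\mathcal{Z}_j)$ makes the tubes pairwise disjoint, so that $M_\varepsilon$ is their disjoint union, with $\delta = d(\cdot,\mathcal{Z}_i)$ on the tube around $\mathcal{Z}_i$. In particular $(\star)$ holds.

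Next I would compute $\Veff$ on each tube. Writing $m_i := n - k_i$, the volume density, through the identification of Lemma~\ref{l:dist} and Proposition~\ref{l:pot}, takes the form $d\vol_g(t,x) = t^{m_i-1} e^{2\phi(t,x)} \, dt \, d\mu(x)$, where $\phi$ is smooth in $t$, continuous in $x$ on the compact $\mathcal{Z}_i$ together with its $t$-derivatives, and extends to $t = 0$ with bounded $\partial_t\phi$ and $\partial_t^2\phi$ (equivalently, $\Delta\delta = (m_i-1)/\delta + O(1)$ as $\delta \to 0$, uniformly on $\mathcal{Z}_i$). Then Proposition~\ref{l:pot}, with $\vartheta = \tfrac{m_i-1}{2}\log t + \phi$, gives
\[
\Veff = (\partial_t\vartheta)^2 + \partial_t^2\vartheta = \frac{(m_i-1)(m_i-3)}{4\delta^2} + \frac{m_i-1}{\delta}\,\partial_t\phi + (\partial_t\phi)^2 + \partial_t^2\phi \ \ge\ \frac{(m_i-1)(m_i-3)}{4\delta^2} - \frac{\kappa_0}{\delta}
\]
for some $\kappa_0 \ge 0$ and all $\delta \le \varepsilon$ (shrinking $\varepsilon$ if needed). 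Using the identity $(m_i-1)(m_i-3) - m_i(m_i-4) = 3$ together with \eqref{eq:conditionsingular}, on $\{0 < d(\cdot,\mathcal{Z}_i) \le \varepsilon\}$ one gets
\[
\Veff + \pot \ \ge\ \frac{(m_i-1)(m_i-3) - m_i(m_i-4)}{4\delta^2} - \frac{\kappa_0 + \kappa}{\delta} - \nu^2 \ =\ \frac{3}{4\delta^2} - \frac{\kappa'}{\delta} - \nu^2, \qquad \kappa' := \kappa_0 + \kappa,
\]
which is \eqref{eq:hypo-1}; and \eqref{eq:hypo-2} is precisely assumption (ii). Theorem~\ref{t:main} then yields essential self-adjointness of $\op$ on $C^\infty_c(M)$, hence on $C^\infty_c$ of any connected component. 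The main obstacle I anticipate is the uniform tube expansion of $d\vol_g$ near a merely $C^2$ submanifold — i.e. justifying that $\phi$ has bounded first two $t$-derivatives up to $t = 0$, equivalently controlling $H(t) = \Hess(\delta)$ as $t \to 0$ via the Riccati equation of Lemma~\ref{l:dist} — together with the identification of the metric boundary of $M$ with $\bigcup_i \mathcal{Z}_i$ and of $\delta$ with the nearest Riemannian distance; both are classical in the smooth case but need care in this low-regularity setting.
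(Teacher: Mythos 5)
Your proposal is correct and follows essentially the same route as the paper: identify the metric boundary with $\bigcup_i\mathcal{Z}_i$ and $\delta$ with the nearest distance $d(\cdot,\mathcal{Z}_i)$, verify $(\star)$ via tubular neighborhoods, expand the volume as $t^{n-k_i-1}$ times a controlled density to get $\Veff\geq \tfrac{(n-k_i-1)(n-k_i-3)}{4\delta^2}-\tfrac{\kappa_0}{\delta}$, and combine with \eqref{eq:conditionsingular} through the identity $(m-1)(m-3)-m(m-4)=3$ before invoking Theorem~\ref{t:main}. The one step you flag as an obstacle — the uniform boundedness of $\partial_t\phi$ and $\partial_t^2\phi$ up to $t=0$ — is exactly what the paper supplies by writing the metric in Fermi coordinates around each $\mathcal{Z}_i$ (citing Gray) together with the $C^2$ regularity of the distance function near a compact $C^2$ submanifold (citing Foote), which gives $\vol_g=t^{n-k_i-1}|b|\,dx\,dt\,d\theta$ with $b$ bounded away from zero and with bounded derivatives.
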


\begin{proof}[Proof of Theorem~\ref{t:quattro-autori-intro}]
Since $N$ is complete, $M = N \setminus \bigcup_{i} \mathcal{Z}_i$ is a non-complete smooth Riemannian manifold whose metric boundary is $\bigcup_i \mathcal{Z}_i$, and
\begin{equation}
\delta(p) = \min_{i\in I} d(p,\mathcal{Z}_i).
\end{equation}
Since each $\mathcal{Z}_i$ is a $C^2$ compact submanifold, there exists $\varepsilon>0$ such that $\delta$ is $C^2$ on each $U_i=\{0<d(p,\mathcal{Z}_i)< \varepsilon\}$, see \cite{Footesmooth}. Hence, hypothesis $(\star)$ is satisfied.

We use Fermi coordinates from the submanifold $\mathcal{Z}_i$, see \cite{Gray}, which are the generalization in higher codimension of Riemannian normal coordinates from a point. In particular, for each $q \in \mathcal{Z}_i$ there is a coordinate neighborhood $\mathcal{O} \simeq \R^{k_i}_x \times \R^{n-k_i}_y$ of $q$ such that $\mathcal{Z}_i \cap \mathcal{O} \simeq \{(x,y)\mid y = 0\}$ and $d((x,y),\mathcal{Z}_i) = |y|$.

Taking polar coordinates $(t,\theta)$ on the $\R^{n-k_i}_y$ part of the Fermi coordinates, $\delta(x,t,\theta) = t$, and the Riemannian measure reads $\vol_g = t^{n-k_i-1} | b(x,t,\theta) \, dx\, dt \, d\theta |$, with $b(x,0,\theta) \neq 0$. Up to taking a smaller $\mathcal{O}$, we assume that $b \geq C > 0$ and the derivatives of $b$ are bounded.
Using the definition \eqref{eq:potential}, and taking in account that $\nabla \delta = \partial_t$, we obtain
\begin{align}
\Veff 
& = \left(\dive(\partial_t)/2\right)^2 + \partial_t \left(\dive(\partial_t)/2\right) \\
& = \frac{(n-k_i-1) (n-k_i-3)}{4 t^2} -\frac{t (\partial_t b)^2+2 b \left((k_i-n+1) \partial_t b-t \partial_t^2 b\right)}{4 t b^2} \\
& \geq \frac{(n-k_i-1) (n-k_i-3)}{4 \delta^2} - \frac{\kappa}{\delta}, \qquad \text{on } \mathcal{O} \setminus \mathcal{Z}_i,
\end{align}
for some constant $\kappa \geq 0$. By compactness of the $\mathcal{Z}_i$'s, and up to modifying the constant $\kappa$, the above estimate holds on $M_\varepsilon = \{ 0<\delta \leq \varepsilon\}$. We conclude by applying Theorem~\ref{t:main}, and using the assumptions on $\pot$.
\end{proof}
\section{Curvature and self-adjointness}\label{s:curvature}

In this section, $\omega=\vol_g$, and $\Delta = \Delta_\omega$ is the Laplace-Beltrami operator. Our aim is to prove two criteria for the essential self-adjointness of $\Delta$, Theorems~\ref{t:curvosc} and \ref{t:curvoscsuper}, which imply Theorem~\ref{t:curv-intro}, presented in the introduction. As discussed there, the blow-up of the sectional curvature at the metric boundary alone is not a sufficient condition. 

For fixed $0<t\leq \varepsilon$, recall that $X_t = \{ \delta = t\}$ is a $C^2$ hypersurface. The $(2,0)$ symmetric tensor $\mathrm{Hess}(\delta)$, the Riemannian Hessian, describes the extrinsic curvature of the level sets $X_t$ in $M$. More precisely, for any fixed $t$, its restriction to $X_t$,
\begin{equation}
H(t) := \Hess(\delta)|_{X_t},
\end{equation}
is the \emph{second fundamental form}
of $X_t$. The eigenvalues of $H(t)$ on $T_p X_t$ are the \emph{principal curvatures} of $X_t$ at $p$, and are denoted by $h_i(t)$, $i=1,\ldots,n-1$. 
Finally, for any $p \in M$, the sectional curvature of a plane $\sigma \subset T_p M$ is denoted by $\mathrm{Sec}(\sigma)$.
\begin{rmk}
When $n=2$, the sectional curvature reduces to the Gauss curvature $\kappa$ of the surface $M$. Moreover $X_t$ is a curve, and $h_1(t)$ is its signed geodesic curvature, where the sign is computed with respect to the direction $-\nabla\delta$.
\end{rmk}

We consider a general setting in which the sectional curvature blows up with a power law. In particular, there exist admissible bands of oscillation, see Figure~\ref{f:admissibleregion}, whose size increases with the dimension $n$.

\begin{figure}
\includegraphics[width=.4\textwidth]{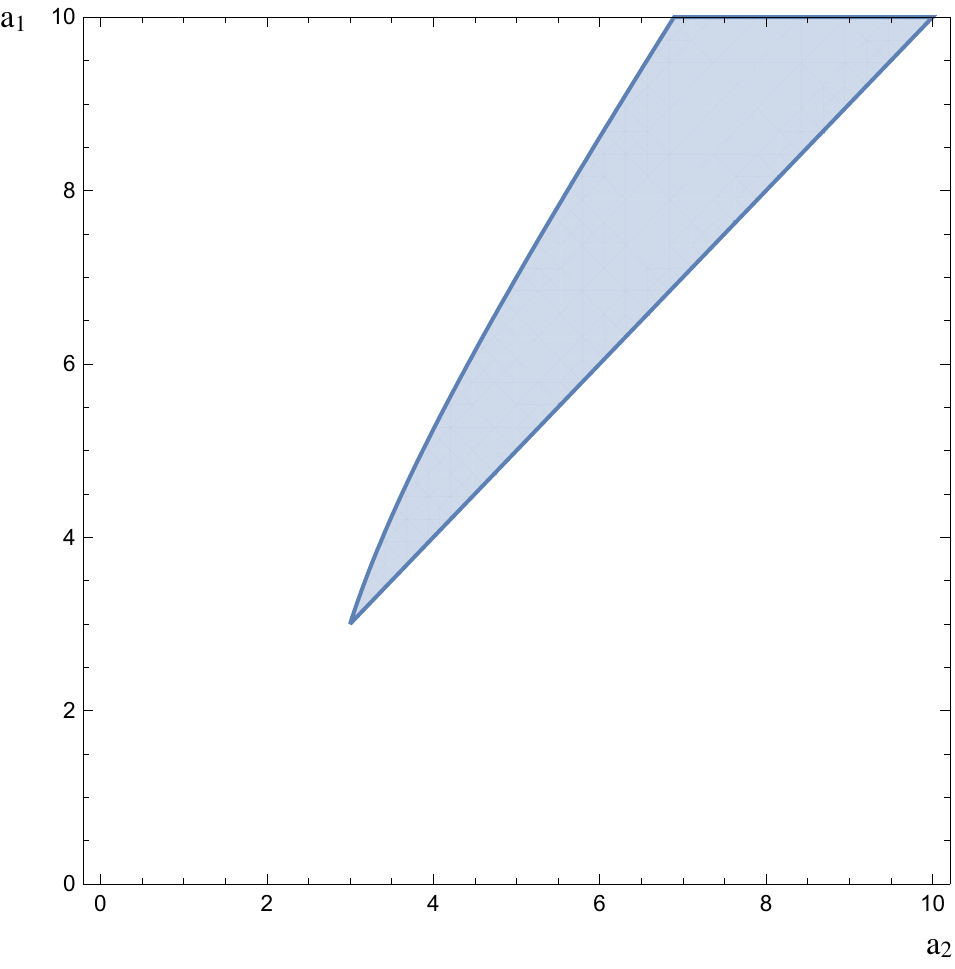} \hspace{.1\textwidth}
\includegraphics[width=.4\textwidth]{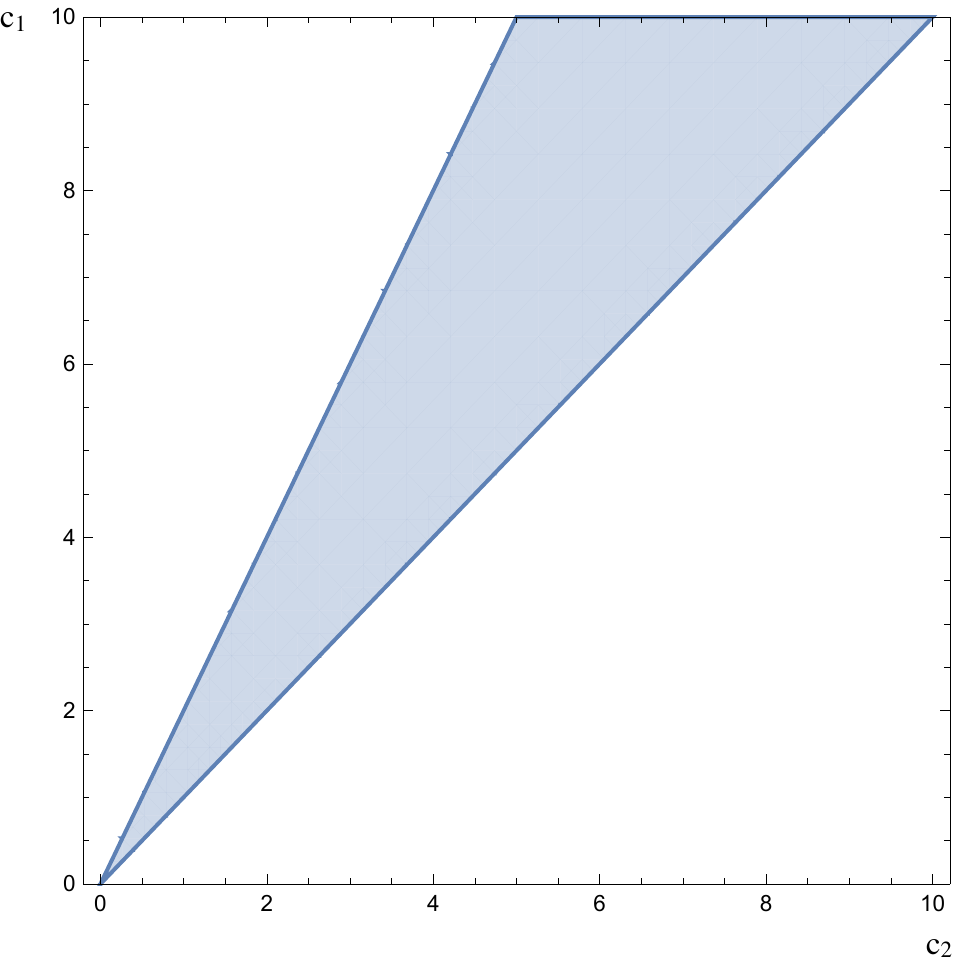}
\caption{Admissible region for the parameters, for $n=2$, in case of quadratic (left), and super-quadratic (right) curvature explosion.}\label{f:admissibleregion}
\end{figure}

\begin{theorem}[Quadratic curvature explosion]\label{t:curvosc}
Let $(M,g)$ be a Riemannian manifold satisfying $(\star)$ for $\varepsilon>0$. Assume that there exist $a_1\geq a_2 > 1$ such that, for all planes $\sigma$ containing the vector $\nabla \delta$, we have
\begin{equation}\label{eq:boundcurv}
-\frac{a_1^2-1}{4\delta^2} \leq \mathrm{Sec}(\sigma) \leq -\frac{a_2^2-1}{4\delta^2}, \qquad \delta \leq \varepsilon.
\end{equation}
Moreover, assume that the principal curvature of the hypersurface $X_\varepsilon=\{\delta = \varepsilon\}$ satisfies\footnote{In \eqref{eq:boundprincurv}, and similarly \eqref{eq:boundprincurvsuper}, the inequality $H(\varepsilon)<\alpha$, for $\alpha \in \R$, is understood in the sense of quadratic forms, that is, for all $q \in X_\varepsilon$ and $X \in T_q X_{\varepsilon}$, we have $H(\varepsilon)(X,X) < \alpha g(X,X)$.}
\begin{equation}\label{eq:boundprincurv}
H(\varepsilon) < \frac{1+a_2}{2\varepsilon}.
\end{equation}
Then, the operator $\Delta$ with domain $C^\infty_c(M)$ is essentially self-adjoint in $L^2(M)$ if
\begin{equation}
\frac{n-1}{16}\left[2(a_2^2-1) -(1-a_1)^2 +(n-2)(1-a_2)^2\right] \geq \frac{3}{4}.
\end{equation}
\end{theorem}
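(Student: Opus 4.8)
The plan is to reduce to the criterion of Corollary~\ref{c:main-nopot-intro}: since $V\equiv 0$ here, it suffices to exhibit $\kappa\ge 0$ with $\Veff\ge\frac{3}{4\delta^2}-\frac{\kappa}{\delta}$ on $M_\varepsilon$. I would first rewrite $\Veff$ in terms of the geometry of the level sets $X_t=\{\delta=t\}$. Using the Bochner form \eqref{eq:potential2-intro} of $\Veff$ with $\omega=\vol_g$, together with $\Delta\delta=\tr H(t)=\sum_i h_i(t)$, $\|\Hess(\delta)\|_{\mathrm{HS}}^2=\sum_i h_i(t)^2$ and $\Ric(\nabla\delta,\nabla\delta)=\sum_i\mathrm{Sec}(\sigma_i)$ (with $\sigma_i$ the plane spanned by $\nabla\delta$ and the $i$-th principal direction of $X_t$), one obtains, along each integral curve of $\nabla\delta$,
\[
\Veff=\tfrac14\Bigl[\bigl(\textstyle\sum_i h_i\bigr)^2-2\sum_i h_i^2-2\sum_i\mathrm{Sec}(\sigma_i)\Bigr]=\tfrac14\bigl((\Delta\delta)^2+2(\Delta\delta)'\bigr),
\]
the last equality being just the definition of $\Veff$ via the normal derivative. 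By \eqref{eq:boundcurv} the curvature term already gives $-2\sum_i\mathrm{Sec}(\sigma_i)\ge\frac{(n-1)(a_2^2-1)}{2\delta^2}$, which accounts for the $2(a_2^2-1)$ contribution in the conclusion; the remaining task is a lower bound for $\bigl(\sum_i h_i\bigr)^2-2\sum_i h_i^2$.

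The core of the argument is to control the principal curvatures $h_i(t)$ by Riccati comparison. By Lemma~\ref{l:dist}, along integral curves $\gamma(t)$ of $\nabla\delta$ the shape operator $H(t)=\Hess(\delta)|_{X_t}$ satisfies $\nabla_{\dot\gamma}H+H^2+R=0$, where the eigenvalues of $R(t)$ are the sectional curvatures of the planes containing $\nabla\delta$, hence lie in $\bigl[-\frac{a_1^2-1}{4t^2},-\frac{a_2^2-1}{4t^2}\bigr]$ by \eqref{eq:boundcurv}. The two explicit solutions $\frac{1\pm a}{2t}$ of the associated scalar Riccati equation $\dot u=-u^2+\frac{a^2-1}{4t^2}$ are exactly the shape operators of the conic/anti-conic models $dt^2+t^{2\beta}h$ with $\beta=\frac{1\pm a}{2}$, and since $a_1\ge a_2>1$ these are the natural comparison functions. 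I would then use three ingredients: (i) the lower curvature bound $\mathrm{Sec}\ge-\frac{a_1^2-1}{4t^2}$ prevents the eigenvalues of $H(t)$ from becoming too negative, giving a lower bound for each $h_i(t)$ governed by the $a_1$-model near $t=0$; (ii) the upper curvature bound $\mathrm{Sec}\le-\frac{a_2^2-1}{4t^2}$, together with \eqref{eq:boundprincurv}, $H(\varepsilon)<\frac{1+a_2}{2\varepsilon}$ (which keeps $H(\varepsilon)$ strictly below the exceptional model solution $\frac{1+a_2}{2\varepsilon}$), gives an upper bound for each $h_i(t)$ governed by the $a_2$-anti-conic model near $t=0$; (iii) the hypothesis $(\star)$ forces $\delta$ to be $C^2$ on all of $M_\varepsilon$, i.e.\ there are no focal points of $X_\varepsilon$ in $M_\varepsilon$, which is precisely what keeps $H(t)$ finite on $(0,\varepsilon]$ and makes the two comparisons legitimate. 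Together this traps each $h_i(t)$ — and hence $\Delta\delta(t)$ — between the anti-conic models attached to $a_1$ and $a_2$.

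Finally I would insert these bounds into the expression for $\Veff$ and minimise $\bigl(\sum_i h_i\bigr)^2-2\sum_i h_i^2-2\sum_i\mathrm{Sec}(\sigma_i)$ over the admissible configurations of principal and sectional curvatures — which, in the limiting regime near the boundary, are effectively independent, since a direction may "remember" a very negative curvature from an arbitrarily short past interval while its current sectional curvature is already as mild as permitted. Carrying out this constrained minimisation, the extremal configuration contributes exactly the combination encoded by $(n-2)(1-a_2)^2-(1-a_1)^2$ on top of the already-isolated $2(a_2^2-1)$ curvature term; absorbing the $O(\delta^{-1})$ errors produced by the non-sharpness of the comparison into $\kappa$, one gets $\Veff\ge\frac{3}{4\delta^2}-\frac{\kappa}{\delta}$ precisely when $\frac{n-1}{16}\bigl[2(a_2^2-1)-(1-a_1)^2+(n-2)(1-a_2)^2\bigr]\ge\frac34$, and Corollary~\ref{c:main-nopot-intro} concludes. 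The compact-resolvent clause, when applicable, comes for free from the last part of Theorem~\ref{t:main}.

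The step I expect to be the main obstacle is the Riccati comparison. The equation is matrix-valued, $\delta$ is only $C^2$ (so by Lemma~\ref{l:dist} the Riccati identity holds only in a weak, bootstrapped sense), and — crucially — the hypotheses bound $H(\varepsilon)$ only from above, so the lower bound on the principal curvatures cannot be read off from an inequality at $X_\varepsilon$ but must be distilled from the curvature hypothesis \eqref{eq:boundcurv} and the absence of focal points in $M_\varepsilon$. Tracking this comparison sharply enough to retain the optimal constant $\frac34$ (rather than a strictly weaker sufficient condition), and pinning down the extremal curvature configuration that produces the stated polynomial in $(a_1,a_2,n)$ together with the precise threshold $\frac{1+a_2}{2\varepsilon}$ in \eqref{eq:boundprincurv}, is the delicate part of the proof.
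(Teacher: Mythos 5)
Your proposal follows the paper's proof essentially step for step: the paper likewise traps the eigenvalues of $H(t)$ between the explicit solutions $h_1,h_2$ of the scalar Riccati equations with curvatures $-(a_i^2-1)/4t^2$ (Lemmas~\ref{l:riccaticomp} and~\ref{l:boundingsol}), taking as initial data the extreme eigenvalues of $H(\varepsilon)$ — the hypothesis $H(\varepsilon)<\tfrac{1+a_2}{2\varepsilon}$ forcing both comparison solutions onto the branch asymptotic to $\tfrac{1-a_i}{2t}$ — and then inserts $|h_2|\le|\lambda_j|\le|h_1|$ into $\Veff=\tfrac14\left[(\tr H)^2-2\tr(H^2)-2\tr R\right]$ before invoking Theorem~\ref{t:main}. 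The only minor discrepancy is that the step you flag as the main obstacle is more routine than you anticipate: the lower Riccati bound is obtained simply by using the smallest eigenvalue of $H(\varepsilon)$ as initial datum (no separate focal-point argument is needed beyond $(\star)$ guaranteeing that $H$ is defined on all of $(0,\varepsilon]$), and the final ``constrained minimisation'' is the elementary estimate $-\sum_j\lambda_j^2+2\sum_{i<j}|\lambda_i||\lambda_j|\ge-(n-1)h_1^2+(n-1)(n-2)h_2^2$.
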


As soon as the rate of explosion of the sectional curvature is more than quadratic, we get a simpler self-adjointness criterion.
\begin{theorem}[Super-quadratic curvature explosion]\label{t:curvoscsuper}
Let $(M,g)$ be a Riemannian manifold satisfying $(\star)$ for $\varepsilon>0$. Let $r>2$ and assume that there exist $c_1\geq c_2>0$ such that for all planes $\sigma$ containing the vector $\nabla \delta$, we have
\begin{equation}\label{eq:boundcurvsuper}
-\frac{c_1}{\delta^r} \leq \mathrm{Sec}(\sigma) \leq -\frac{c_2}{\delta^r}, \qquad \delta \leq \varepsilon.
\end{equation}
Then, there exists a constant $h_\varepsilon^*(c_2,r)>0$ such that, if the principal curvature of the hypersurface $X_\varepsilon=\{\delta = \varepsilon\}$ satisfies
\begin{equation}\label{eq:boundprincurvsuper}
H(\varepsilon) \leq h_\varepsilon^*(c_2,r),
\end{equation}
the operator $\Delta$ with domain $C^\infty_c(M)$ is essentially self-adjoint in $L^2(M)$ whenever 
\begin{equation}
0<c_2 \leq c_1 < n c_2 .
\end{equation}
\end{theorem}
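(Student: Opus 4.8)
The plan is to apply Corollary~\ref{c:main-nopot-intro}: since here $\omega=\vol_g$, it suffices to produce $\kappa\geq 0$ with $\Veff\geq \tfrac{3}{4\delta^2}-\tfrac{\kappa}{\delta}$ near the metric boundary. As $r>2$, this will follow once we show $\Veff\geq C\,\delta^{-r}(1+o(1))$ as $\delta\to 0^+$ for some constant $C=C(n,c_1,c_2,r)>0$, uniformly along the integral curves of $\nabla\delta$: indeed $\delta^{-r}$ dominates $\delta^{-2}$ at the boundary, and on $\{\delta_0\leq\delta\leq\varepsilon\}$ the continuous function $\Veff$ is bounded below (uniformly in the point of $X_\varepsilon$, again by the estimates below), so the remaining defect is absorbed into $-\kappa/\delta$ by taking $\kappa$ large.

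To estimate $\Veff$ I would work with the scalar quantity $m=\Delta\delta=\tr H(t)$ along a unit-speed integral curve $\gamma$ of $\nabla\delta$, where $H(t)=\Hess(\delta)|_{X_t}$ has principal curvatures $h_1(t),\dots,h_{n-1}(t)$. By \eqref{eq:potential}, $\Veff=(m/2)^2+(m/2)'$, and Lemma~\ref{l:dist} gives the Riccati equation $H'=-H^2-R$, where $R(t)$ is the curvature operator on $\nabla\delta^\perp$ with eigenvalues the sectional curvatures $\mathrm{Sec}(\sigma)$ of planes $\sigma\ni\nabla\delta$; thus $-c_1 t^{-r}\,\mathrm{Id}\leq R(t)\leq -c_2 t^{-r}\,\mathrm{Id}$ for $t\leq\varepsilon$ by \eqref{eq:boundcurvsuper}. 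Diagonalising $H$ along $\gamma$, each principal curvature solves $h_i'=-h_i^2-\mathrm{Sec}(\sigma_i(t))$ with $\mathrm{Sec}(\sigma_i(t))\in[-c_1 t^{-r},-c_2 t^{-r}]$, while taking the trace of the Riccati equation yields $m'=-\sum_i h_i^2-\Ric(\nabla\delta,\nabla\delta)$, with $\Ric(\nabla\delta,\nabla\delta)=\sum_i\mathrm{Sec}(\sigma_i)\leq -(n-1)c_2 t^{-r}$.

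The core of the argument is the analysis of the scalar model $u'=-u^2+c\,t^{-r}$ on $(0,\varepsilon]$. For $r>2$ it has two branches behaving like $\pm\sqrt{c}\,t^{-r/2}(1+o(1))$ as $t\to 0^+$ (the balance being $u^2\sim c\,t^{-r}$, since $u'$ is of lower order $t^{-r/2-1}$), and a separatrix datum at $t=\varepsilon$ above which solutions escape to $+\infty$ inside $(0,\varepsilon]$: the constant $h_\varepsilon^*(c,r)>0$ is chosen strictly below this separatrix value, so that a solution with datum $\leq h_\varepsilon^*$ not only survives on $(0,\varepsilon]$ but is eventually on the lower, decaying branch. (For $r=2$ one computes $h_\varepsilon^*(c,r)=\tfrac{1+\sqrt{1+4c}}{2\varepsilon}$, matching $\tfrac{1+a_2}{2\varepsilon}$ in \eqref{eq:boundprincurv}.) Since $h_\varepsilon^*$ is monotone in $c$, the hypothesis $H(\varepsilon)\leq h_\varepsilon^*(c_2,r)\leq h_\varepsilon^*(c_1,r)$ places each $h_i(\varepsilon)$ below both thresholds; comparing $h_i$ — a supersolution of the $c_2$-model and a subsolution of the $c_1$-model — with the corresponding solutions having the same datum at $\varepsilon$ (the comparison valid throughout $(0,\varepsilon]$ because Lemma~\ref{l:dist} forbids conjugate points of $H$), I obtain, for each $i$ and uniformly along $\gamma$,
\[
	-\sqrt{c_1}\,t^{-r/2}(1+o(1)) \;\leq\; h_i(t) \;\leq\; -\sqrt{c_2}\,t^{-r/2}(1+o(1)), \qquad t\to 0^+ .
\]
In particular all principal curvatures are eventually negative, so $|m(t)|=\sum_i|h_i(t)|\geq (n-1)\sqrt{c_2}\,t^{-r/2}(1+o(1))$ and $\sum_i h_i(t)^2\leq\big(\max_i|h_i(t)|\big)|m(t)|\leq\sqrt{c_1}\,t^{-r/2}|m(t)|(1+o(1))$. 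Inserting these into $\Veff=\tfrac{m^2}{4}+\tfrac{m'}{2}$ with $m'\geq-\sum_i h_i^2+(n-1)c_2 t^{-r}$ and writing $x=|m(t)|\,t^{r/2}$, one gets
\[
	\Veff \;\geq\; t^{-r}\Big(\tfrac{x^2}{4}-\tfrac{\sqrt{c_1}}{2}\,x+\tfrac{(n-1)c_2}{2}\Big)(1+o(1)), \qquad x\geq (n-1)\sqrt{c_2}(1+o(1)).
\]
Minimising the quadratic in $x$ over the admissible range — its unconstrained minimum is at $x=\sqrt{c_1}$, of value $\tfrac{2(n-1)c_2-c_1}{4}$, while at the endpoint $x=(n-1)\sqrt{c_2}$ its value is $\tfrac{n-1}{4}\big((n+1)c_2-2\sqrt{c_1c_2}\big)$ — and using $c_1<nc_2$ (hence $c_1<\tfrac{(n+1)^2}{4}c_2$ and $c_1<2(n-1)c_2$), the bracket stays bounded below by a positive constant. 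This gives $\Veff\geq C\,\delta^{-r}(1+o(1))$ near the metric boundary and, with the first paragraph, the essential self-adjointness of $\Delta$.

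I expect the main obstacle to be the analysis of the scalar Riccati equation $u'=-u^2+c\,t^{-r}$ near $t=0$: because the coefficient $c\,t^{-r}$ is not integrable at the origin, the usual comparison machinery (designed for bounded coefficients) must be run carefully on $(0,\varepsilon]$, and one has to identify the separatrix and the threshold $h_\varepsilon^*(c,r)$, establish the precise $\pm\sqrt{c}\,t^{-r/2}$ asymptotics of the two branches, and — in the non-compact case — control the uniformity of all these estimates over $X_\varepsilon$. The only remaining difficulty is the elementary but slightly delicate optimisation that extracts the sharp sign of the leading coefficient of $\Veff$, which is precisely where the dimensional restriction $c_1<nc_2$ is used.
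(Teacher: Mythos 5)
Your overall strategy is the same as the paper's: reduce to the criterion $\Veff\geq\tfrac{3}{4\delta^2}-\tfrac{\kappa}{\delta}$, control the second fundamental form $H(t)$ of the level sets via Riccati comparison with the scalar model $h'+h^2-c\,t^{-r}=0$, extract the asymptotics $h(t)\sim-\sqrt{c}\,t^{-r/2}$ for data below a threshold $h^*_\varepsilon(c,r)$, and feed the resulting two-sided eigenvalue bounds into $\Veff=\tfrac14\left[(\tr H)^2-2\tr(H^2)-2\tr(R)\right]$, where positivity of the leading $t^{-r}$ coefficient is exactly the condition $c_1<nc_2$. The paper carries out the scalar model analysis explicitly (Lemma~\ref{l:boundingsol2}, via modified Bessel functions, identifying $h^*_\varepsilon$ as the exact separatrix), whereas you argue qualitatively and take $h^*_\varepsilon$ strictly below the separatrix; that is legitimate for the existence statement in the theorem, and it even sidesteps the borderline datum at which the solution follows the $+\sqrt{c}\,t^{-r/2}$ branch. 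Your final optimisation (bounding $\sum_i h_i^2\leq\max_i|h_i|\cdot|m|$ and minimising a quadratic in $x=|m|t^{r/2}$) differs from the paper's direct estimate $(\tr H)^2-2\tr(H^2)\geq-(n-1)h_1^2+(n-1)(n-2)h_2^2$, but both land on $c_1<nc_2$ and your case analysis at the vertex $x=\sqrt{c_1}$ and at the endpoint $x=(n-1)\sqrt{c_2}$ checks out for $n\geq2$.

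There is, however, one genuine flaw in the justification: the claim that ``diagonalising $H$ along $\gamma$, each principal curvature solves $h_i'=-h_i^2-\mathrm{Sec}(\sigma_i(t))$.'' The matrix Riccati equation $\nabla_{\dot\gamma}H+H^2+R=0$ does \emph{not} decouple into scalar equations for the eigenvalues, because the eigenframe of $H(t)$ is in general not parallel along $\gamma$ and $H$ need not commute with $R$; the derivative of an eigenvalue picks up no contribution from the rotation of the eigenvector only at first order, and in any case the forcing term is $g(R\,e_i,e_i)$ for the (moving) eigenvector $e_i$, not an autonomous scalar ODE one can integrate and compare. What you actually need — $h_1(t)\,\mathbbold{1}\leq H(t)\leq h_2(t)\,\mathbbold{1}$ with $h_1,h_2$ the solutions of the scalar comparison models with data the extreme eigenvalues of $H(\varepsilon)$ — is precisely the content of the matrix Riccati comparison theorem, Lemma~\ref{l:riccaticomp}, which is the tool the paper uses (your parenthetical about Lemma~\ref{l:dist} excluding blow-up of $H$ on $(0,\varepsilon]$ is the right remark for why the comparison persists on the whole interval). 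So the gap is repairable by invoking that lemma in place of the scalar decoupling, but as written the step is incorrect. Separately, you are right to flag uniformity over a non-compact $X_\varepsilon$: the comparison constants depend on the initial eigenvalues of $H(\varepsilon)$, which the hypothesis bounds only from above; this caveat is shared with the paper's own proof.
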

\begin{rmk}\label{r:critconstant}
For completeness, the explicit value of the constant $h_\varepsilon^*(c,r)$, expressed in terms of the modified Bessel functions $K_\nu(z)$, is
\begin{equation}
h_\varepsilon^*(c,r)=\frac{1}{2\varepsilon} - \frac{\sqrt{c}}{\varepsilon^{r/2}}\frac{K'_{1/(r-2)}(2\sqrt{c} \varepsilon^{1-r/2}/(r-2))}{K_{1/(r-2)}(2\sqrt{c} \varepsilon^{1-r/2}/(r-2))}.
\end{equation}
Notice that, by Lemma~\ref{l:boundingsol2}, the map $c \mapsto h_\varepsilon^*(c,r)$ is monotone increasing. 
\end{rmk}

\begin{rmk}
By the known asymptotics for the Bessel function \cite[Eqs.\ 9.7.8 and 9.7.10]{AbSteg}, one can check that the condition \eqref{eq:boundprincurvsuper} tends to the corresponding one \eqref{eq:boundprincurv} for $r \to 2^+$. However, we stress that Theorem~\ref{t:curvosc} is \emph{not} a limit case of Theorem~\ref{t:curvoscsuper}. Indeed, the proof of these results is based on a control on the asymptotic behavior of the effective potential, which does not pass to the limit.
\end{rmk}

\subsection{Proofs of curvature-based criteria}

Fix $x \in X_\varepsilon$, and let $\gamma: (0,\varepsilon] \to M$ be the geodesic such that $\gamma(\varepsilon) = x$, and $\dot\gamma(t) = \nabla\delta(\gamma(t))$, for which $\delta(\gamma(t)) = t$. Let $V_1,\ldots,V_{n-1},\nabla\delta$ be an orthonormal, parallel transported frame along $\gamma(t)$. {With a slight abuse of notation, we denote with
\begin{equation}
H(t) = \Hess(\delta)(V_i,V_j)|_{\gamma(t)}, \qquad i,j=1,\ldots,n-1,
\end{equation}
the $(n-1 )\times (n-1)$ symmetric matrix representing $\Hess(\delta)|_{X_t}$ along $\gamma$. By Lemma~\ref{l:dist}, $H$ is a solution of the matrix Riccati equation,
\begin{equation}
H^\prime + H^2 + R = 0, \qquad R(t) = g(R^\nabla(V_i,\nabla \delta)\nabla\delta,V_j)|_{\gamma(t)}.
\end{equation}
We will use the following ``backwards'' version of the classical Riccati comparison theorem, which follows directly from the analogous ``forward'' statement in \cite{Royden}.

\begin{lemma}[Riccati comparison]\label{l:riccaticomp}
Assume that $R_1(t) \leq R(t) \leq R_2(t)$ for some families $R_i(t)$ of symmetric matrices. Let $H_1(t)$ and $H_2(t)$ be solutions of
\begin{equation}\label{eq:riccatieq}
H^\prime_i + H^2_i + R_i = 0, \qquad i=1,2,
\end{equation}
both defined on a common maximal interval of the form $(t_*,\varepsilon]$, with initial conditions satisfying $H_1(\varepsilon) \leq H(\varepsilon) \leq H_2(\varepsilon)$. Then,
\begin{equation}
H_1(t) \leq H(t) \leq H_2(t), \qquad \forall t \in (t_*,\varepsilon].
\end{equation}
The statement remains true when all inequalities concerning $H$ are strict.
\end{lemma}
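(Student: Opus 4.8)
The plan is to reduce the statement to the classical ``forward'' matrix Riccati comparison theorem (as in \cite{Royden}) by a time reversal. Set $s = -t$, so that the interval $(t_*,\varepsilon]$ is mapped to $[-\varepsilon, -t_*)$, with the endpoint $t = \varepsilon$ becoming the \emph{left} endpoint $s_0 = -\varepsilon$. For each of the three solutions, define $B(s) := -H(-s)$, $B_i(s) := -H_i(-s)$, together with the pulled-back curvatures $\tilde R(s) := R(-s)$ and $\tilde R_i(s) := R_i(-s)$. A chain-rule computation using $H' + H^2 + R = 0$ gives $\tfrac{d}{ds}B = H'(-s) = -H(-s)^2 - R(-s) = -B^2 - \tilde R$, so that $B$ solves the standard matrix Riccati equation $B' + B^2 + \tilde R = 0$ on $[-\varepsilon, -t_*)$, and likewise $B_i' + B_i^2 + \tilde R_i = 0$. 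The negation is exactly what keeps the equation in Riccati --- rather than ``anti-Riccati'' --- form after reversing time.

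With this change of variables the hypotheses become precisely those of the forward comparison theorem: from $R_1 \le R \le R_2$ we get $\tilde R_1 \le \tilde R \le \tilde R_2$, and from $H_1(\varepsilon) \le H(\varepsilon) \le H_2(\varepsilon)$ we get, at the left endpoint $s_0 = -\varepsilon$, the reversed ordering $B_2(s_0) \le B(s_0) \le B_1(s_0)$. Applying the forward Riccati comparison of \cite{Royden} twice --- once to the pair $(B_1, B)$, using $\tilde R_1 \le \tilde R$ and $B_1(s_0) \ge B(s_0)$, and once to the pair $(B, B_2)$, using $\tilde R \le \tilde R_2$ and $B(s_0) \ge B_2(s_0)$ --- yields $B_2(s) \le B(s) \le B_1(s)$ for all $s$ in the common interval of definition. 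Undoing the substitution, i.e.\ negating and writing $t = -s$, this is exactly $H_1(t) \le H(t) \le H_2(t)$ for all $t \in (t_*, \varepsilon]$.

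The strict version follows in the same way from the strict form of the forward comparison theorem; alternatively it can be deduced from the non-strict version by slightly decreasing $H_1(\varepsilon)$, increasing $H_2(\varepsilon)$ (or perturbing the comparison curvatures $R_i$) and passing to the limit. The only genuinely delicate point is bookkeeping the direction of monotonicity under the time reversal: in the forward statement the solution attached to the \emph{larger} curvature is the \emph{smaller} one, but after reversing time --- because the second fundamental form of the level sets $X_t$ blows up as $\delta \to 0^+$ --- this pairing is inverted, which is why here $H_2$, associated with the upper curvature bound $R_2$, plays the role of the upper barrier. Everything else is the routine chain-rule computation indicated above, and the matrix (rather than scalar) nature of the equation is already taken care of by the cited theorem.
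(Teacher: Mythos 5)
Your time-reversal reduction $B(s)=-H(-s)$ is exactly the route the paper intends: the paper gives no argument beyond the remark that the lemma ``follows directly from the analogous forward statement'' in the cited reference, and your computation correctly checks that the substitution preserves the Riccati form, swaps the roles of upper and lower barriers, and allows the forward comparison to be applied twice. Two minor caveats: the inversion of the pairing has nothing to do with the blow-up of the second fundamental form of the level sets (it is purely the sign flip $B=-H$ combined with reversing the time direction), and your alternative limiting argument for the strict case would only recover non-strict inequalities in the limit, so the strict form of the forward theorem (your primary route) is the one to rely on.
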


\begin{figure}
\includegraphics[width=\textwidth]{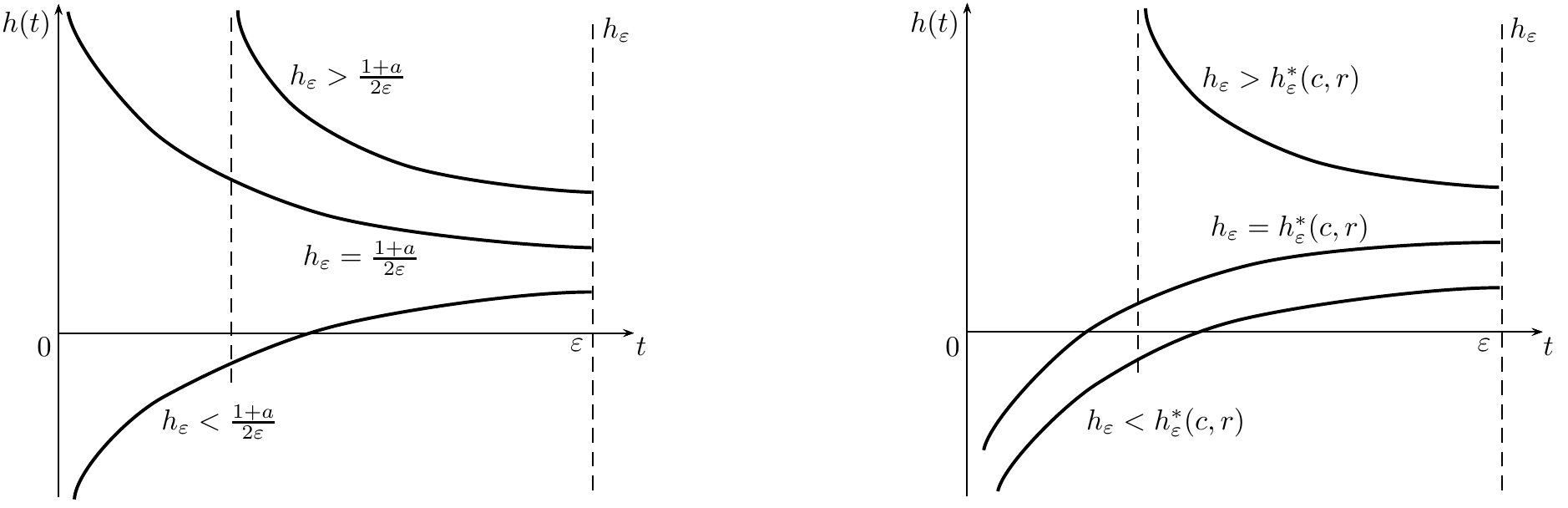}
\caption{Solutions of the Riccati equation in the quadratic (left) and super-quadratic case (right). In both cases, the blow-up time depends on the position of the initial datum with respect to a critical threshold.}\label{f:Riccatisolutions}
\end{figure}
\begin{lemma}[Exact solutions: quadratic case]\label{l:boundingsol}
Let $m \in \R$, $a>1$, and consider the backwards Riccati Cauchy problem:
\begin{equation}\label{eq:lemmariccati}
h^\prime+h^2 -\frac{a^2-1}{4t^2} = 0, \qquad h(\varepsilon) = \frac{1+a}{2\varepsilon} + \frac{m}{2\varepsilon}.
\end{equation}
Its unique solution is defined on a maximal interval $(t_*,\varepsilon]$, with $t_* \geq 0$. The blow-up time is $t_* =0$ if and only if $m \leq 0$ and, in this case, we have the asymptotic behavior
\begin{equation}
h(t) \sim \frac{1}{2t}\times\begin{cases}
1+a & m=0, \\
1-a & m<0,
\end{cases} \qquad \text{for } t \to 0^+.
\end{equation}
In particular, the solution blows-up at $+\infty$ for $m=0$ and at $-\infty$ for $m < 0$.
\end{lemma}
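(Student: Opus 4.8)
\emph{The plan} is to linearize the scalar Riccati equation in the classical way. Given a solution $h$ of \eqref{eq:lemmariccati} on an interval $(t_*,\varepsilon]$, I would set $u(t):=\exp\big(\int_\varepsilon^t h(s)\,ds\big)$, so that $u>0$ there, $u'=hu$, and hence $u''=(h'+h^2)u=\frac{a^2-1}{4t^2}\,u$. Conversely, for any nonzero solution $u$ of the Euler equation
\[
u''-\frac{a^2-1}{4t^2}\,u=0,
\]
the function $h:=u'/u$ solves \eqref{eq:lemmariccati} wherever $u\neq 0$, and $h$ blows up precisely at a zero of $u$. Since the right-hand side of \eqref{eq:lemmariccati} is smooth in $(t,h)$ for $t>0$, Picard--Lindel\"of gives a unique maximal solution, so it suffices to analyze the Euler equation.

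\emph{Next} I would solve the Euler equation explicitly. Its indicial equation is $\lambda^2-\lambda-\frac{a^2-1}{4}=0$, with roots $\lambda_\pm=\frac{1\pm a}{2}$, so the general solution is $u(t)=t^{(1-a)/2}\big(A t^a+B\big)$. Imposing the initial datum through $h(\varepsilon)=u'(\varepsilon)/u(\varepsilon)=\frac{1+a+m}{2\varepsilon}$ fixes the ratio $B/A$. For $m\neq -2a$ one may normalize $A=1$, obtaining $B=-\frac{m\,\varepsilon^a}{2a+m}$ and the closed form
\[
h(t)=\frac{1+a}{2t}-\frac{aB}{t\,(t^a+B)},\qquad B=-\frac{m\,\varepsilon^a}{2a+m};
\]
the degenerate value $m=-2a$ forces $A=0$, i.e.\ $u(t)=t^{(1-a)/2}$ and $h(t)\equiv\frac{1-a}{2t}$ on all of $(0,\varepsilon]$.

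\emph{Finally} I would read off the two conclusions. Going backwards from $\varepsilon$, the solution ceases to exist at the first zero of $t\mapsto t^a+B$. If $m>0$ then $B<0$ and $|B|=\frac{m}{2a+m}\,\varepsilon^a<\varepsilon^a$, so this zero occurs at $t_*=|B|^{1/a}\in(0,\varepsilon)$ and $h(t)\to\pm\infty$ as $t\to t_*^+$; hence $t_*>0$. If $m\le 0$ the factor $t^a+B$ never vanishes on $(0,\varepsilon]$: for $m=0$, $B=0$ and $h(t)=\frac{1+a}{2t}$ throughout; for $-2a<m<0$, $B>0$; for $m<-2a$, $B<0$ but $\varepsilon^a+B=\frac{2a}{2a+m}\,\varepsilon^a<0$, and since $t\mapsto t^a+B$ is increasing it stays negative on $(0,\varepsilon]$ (the value $m=-2a$ is the degenerate case already handled). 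In all these subcases $u$ is nonvanishing on $(0,\varepsilon]$, so $t_*=0$. For the asymptotics I would let $t\to 0^+$ in the closed form: since $t^a+B\to B$, one gets $h(t)=\frac{1+a}{2t}$ identically when $m=0$, whereas for $m<0$ one has $B\neq 0$ and $\frac{aB}{t(t^a+B)}\sim\frac at$, giving $h(t)\sim\frac{1+a}{2t}-\frac at=\frac{1-a}{2t}$. Since $a>1$, this is blow-up at $+\infty$ in the first case and at $-\infty$ in the second.

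\emph{The only point} requiring care is not a deep obstacle --- the equation is exactly solvable --- but the sign bookkeeping for $B$ and the resulting decision of whether the linear factor $t^a+B$ actually vanishes inside $(0,\varepsilon]$; the subcase $m<-2a$, where $B$ is negative yet no blow-up occurs because $u$ keeps a constant sign on $(0,\varepsilon]$, and the degenerate value $m=-2a$ are the only places where a careless argument could go astray.
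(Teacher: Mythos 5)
Your proof is correct and takes essentially the same route as the paper: the paper simply writes down the explicit solution $h(t)=\frac{1}{2t}\frac{(2a+m)(a+1)(t/\varepsilon)^a+m(a-1)}{(2a+m)(t/\varepsilon)^a-m}$ and reads off the pole, which is exactly your closed form after clearing $B=-\frac{m\varepsilon^a}{2a+m}$. Your linearization to the Euler equation merely supplies the derivation and the sign bookkeeping (including the $m\le -2a$ subcases) that the paper compresses into ``one can check''.
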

\begin{proof}
One can check that the  unique solution of the backwards Riccati equation is
\begin{equation}
h(t) = \frac{1}{2t}\frac{(2a+m)(a+1)(t/\varepsilon)^a +m(a-1)}{(2a+m)(t/\varepsilon)^a -m }.
\end{equation}
We observe that $h(t)$ has a simple pole at $t=0$, and thus $0 \leq t_* < \varepsilon$, with
\begin{equation}
\left(\frac{t_*}{\varepsilon}\right)^a = \begin{cases}
\frac{m}{2a+m} & \text{if } m>0, \\
0 &  \text{if } m \leq 0.
\end{cases}
\end{equation}
All the other statements follow from straightforward computations.
\end{proof}

For the next statement, we use the modified Bessel functions $I_\nu(z)$ and $K_\nu(z)$, which are real and positive for $\nu > -1$ and $z>0$, see \cite[Sec. 9.6]{AbSteg}.

\begin{lemma}[Exact solutions: super-quadratic case]\label{l:boundingsol2}
Let $c>0$ and $r>2$. Consider the backwards Riccati Cauchy problem:
\begin{equation}\label{eq:lemmariccatisuper}
h^\prime+h^2 -\frac{c}{t^r} = 0, \qquad h(\varepsilon) = h_\varepsilon.
\end{equation}
Its unique solution is defined on a maximal interval $(t_*,\varepsilon]$, with $t_* \geq 0$. The blow-up time is $t_*=0$ if and only if
\begin{equation}
h_\varepsilon \leq h_\varepsilon^*(c,r):=\frac{1}{2\varepsilon} - \frac{\sqrt{c}}{t^{r/2}}\frac{K'_\nu(\tau(\varepsilon))}{K_\nu(\tau(\varepsilon))}, \quad \text{where} \quad \tau(t) := \frac{2\nu \sqrt{c}}{t^{1/2\nu}}, \quad \nu := \frac{1}{r-2},
\end{equation}
in which case, we have the asymptotic behavior
\begin{equation}
h(t) \sim - \frac{\sqrt{c}}{t^{r/2}} \qquad \text{for } t \to 0^+.
\end{equation}
Moreover, the map $c \mapsto h_\varepsilon^*(c,r)$ is monotone increasing.
\end{lemma}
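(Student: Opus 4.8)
The plan is to linearise the Riccati equation and read all three assertions off the resulting modified Bessel equation. Since the right-hand side $(t,h)\mapsto -h^2+ct^{-r}$ is smooth for $t>0$, Picard--Lindel\"of yields a unique maximal solution on an interval $(t_*,\varepsilon]$, with $0\le t_*<\varepsilon$ because the coefficient $ct^{-r}$ is singular at $0$; being a scalar ODE, it can only fail to extend by escaping to infinity, so $h(t)\to\pm\infty$ as $t\downarrow t_*$ when $t_*>0$. The substitution $h=w'/w$ converts $h'+h^2=ct^{-r}$ into $w''-ct^{-r}w=0$, and conversely $w'/w$ solves the Riccati equation on any subinterval where $w\neq 0$. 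Normalising $w(\varepsilon)=1$ and $w'(\varepsilon)=h_\varepsilon$, the maximal interval of $h$ is $(t_*,\varepsilon]$, where $t_*=0$ if $w$ is zero-free on $(0,\varepsilon]$ and $t_*$ equals the largest zero of $w$ in $(0,\varepsilon)$ otherwise.

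Next I would solve the linear equation: the change of variables $w(t)=\sqrt t\,u(\tau(t))$, with $\nu=1/(r-2)$ and $\tau(t)=2\nu\sqrt c\,t^{-1/(2\nu)}$, takes $w''-ct^{-r}w=0$ into the modified Bessel equation of order $\nu$ for $u$, so $w=\sqrt t\bigl(C_1 I_\nu(\tau(t))+C_2 K_\nu(\tau(t))\bigr)$. Put $w_I=\sqrt t\,I_\nu(\tau)$ and $w_K=\sqrt t\,K_\nu(\tau)$: both are strictly positive on $(0,\varepsilon]$ since $I_\nu,K_\nu>0$ and $\tau>0$. The key point is the dichotomy $t_*=0\iff C_1\ge 0$. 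To prove it, consider $\rho:=w/w_I=C_1+C_2\,w_K/w_I$; because the linear equation has no first-order term its Wronskian is constant, so $\rho'=-W(w,w_I)/w_I^2$ has a fixed sign and $\rho$ is monotone on $(0,\varepsilon]$. Moreover $\rho(\varepsilon)=1/w_I(\varepsilon)>0$, and by the asymptotics $I_\nu(z)\sim e^z/\sqrt{2\pi z}$, $K_\nu(z)\sim\sqrt{\pi/2z}\,e^{-z}$ as $z\to\infty$ \cite{AbSteg} one has $w_K/w_I\to 0$, hence $\rho(t)\to C_1$ as $t\downarrow 0$; a monotone function which is positive at both ends of the interval is positive throughout, so $C_1\ge 0$ forces $w>0$ on $(0,\varepsilon]$ and $t_*=0$, whereas $C_1<0$ forces $\rho$, hence $w$, to vanish somewhere in $(0,\varepsilon)$ and $t_*>0$. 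It then remains to translate $C_1\ge 0$ into a bound on $h_\varepsilon$: Cramer's rule on $C_1 w_I(\varepsilon)+C_2 w_K(\varepsilon)=1$, $C_1 w_I'(\varepsilon)+C_2 w_K'(\varepsilon)=h_\varepsilon$ gives $C_1=\bigl(w_K'(\varepsilon)-h_\varepsilon w_K(\varepsilon)\bigr)/W$ with $W=w_I w_K'-w_I' w_K=-t\tau'/\tau>0$ (using $I_\nu K_\nu'-I_\nu' K_\nu=-1/z$), so $C_1\ge 0\iff h_\varepsilon\le w_K'(\varepsilon)/w_K(\varepsilon)$; and a direct computation, using $\tau'(t)=-\sqrt c\,t^{-r/2}$, gives $w_K'/w_K=\tfrac1{2t}+\tau'(t)K_\nu'(\tau)/K_\nu(\tau)$, which at $t=\varepsilon$ is exactly $h_\varepsilon^*(c,r)$.

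For the asymptotics, when $h_\varepsilon<h_\varepsilon^*(c,r)$ (so $C_1>0$) we have $w\sim C_1 w_I$ near $0$, hence $h=w'/w\sim w_I'/w_I=\tfrac1{2t}+\tau'(t)I_\nu'(\tau)/I_\nu(\tau)\sim\tau'(t)=-\sqrt c\,t^{-r/2}$, using $I_\nu'/I_\nu\to 1$; (the borderline case $h_\varepsilon=h_\varepsilon^*$ corresponds to $w=C_2 w_K$ and gives $h\sim+\sqrt c\,t^{-r/2}$, which is immaterial in the applications, where only $h^2\sim ct^{-r}$ enters the bounds on $\Veff$). Finally, for the monotonicity of $c\mapsto h_\varepsilon^*(c,r)$, note that by construction $h_\varepsilon^*(c,r)=h_{K,c}(\varepsilon)$, where $h_{K,c}:=w_K'/w_K$ is the Riccati solution with $h_{K,c}(t)\sim+\sqrt c\,t^{-r/2}$ at $0$. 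For $c_1<c_2$ the difference $g:=h_{K,c_2}-h_{K,c_1}$ satisfies the linear equation $g'+(h_{K,c_1}+h_{K,c_2})g=(c_2-c_1)t^{-r}>0$ and $g(t)\sim(\sqrt{c_2}-\sqrt{c_1})t^{-r/2}>0$ as $t\downarrow 0$; multiplying by a positive integrating factor then shows $g$ stays positive up to $t=\varepsilon$, i.e.\ $h_\varepsilon^*(c_2,r)>h_\varepsilon^*(c_1,r)$.

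The step I expect to be the real obstacle is the dichotomy $t_*=0\iff C_1\ge 0$: a priori a solution with $C_1>0$ might dip below zero and return, which would give $t_*>0$ in spite of $C_1>0$. The argument above excludes this by exploiting that $w/w_I$ is monotone (constancy of the Wronskian against the zero-free reference solution $w_I$), combined with the exponential Bessel asymptotics which fix the limiting value $C_1$; everything else is bookkeeping with standard Bessel identities and elementary Riccati comparison.
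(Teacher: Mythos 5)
Your proof is correct, and while it uses the same linearisation as the paper (the substitution reducing the Riccati equation to the modified Bessel equation of order $\nu=1/(r-2)$, with general solution built from $I_\nu$ and $K_\nu$), it diverges from the paper's proof in two worthwhile ways. First, the paper establishes the dichotomy by solving $w(\tau(t_*))=0$ explicitly and invoking the monotonicity of $z\mapsto I_\nu(z)/K_\nu(z)$; your route via the constancy of the Wronskian of $w$ against the zero-free solution $w_I$, plus the limit $w/w_I\to C_1$, is an equivalent but cleanly packaged version of the same mechanism, and your Cramer's-rule computation of $C_1$ (using $I_\nu K_\nu'-I_\nu'K_\nu=-1/z$) replaces the paper's "routine computations" identifying $b$. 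Second, and more substantially, your proof of the monotonicity of $c\mapsto h_\varepsilon^*(c,r)$ is genuinely different and more self-contained: the paper differentiates $-zK_\nu'(z)/K_\nu(z)$ and must appeal to the Tur\'an-type inequality $zK_\nu'(z)/K_\nu(z)<-\sqrt{z^2+\nu^2}$ from an external reference, whereas your observation that $h_\varepsilon^*(c,r)$ is the value at $\varepsilon$ of the distinguished Riccati solution $w_K'/w_K$ with prescribed behaviour at $0$ reduces the claim to an elementary integrating-factor comparison of two Riccati solutions. Finally, you are right to separate the borderline case: the lemma's asymptotics $h(t)\sim-\sqrt c\,t^{-r/2}$ holds only for $h_\varepsilon<h_\varepsilon^*$, while for $h_\varepsilon=h_\varepsilon^*$ one gets $h\sim+\sqrt c\,t^{-r/2}$; the paper's blanket claim that $w'(z)/w(z)=1+O(z^{-1})$ "for any choice of $a,b$" fails for $a=0$ (where the limit is $-1$), so on this point your version is actually more accurate than the paper's.
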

\begin{proof}
By replacing in the Riccati equation \eqref{eq:lemmariccatisuper} the ansatz
\begin{equation}\label{eq:ansatz}
h(t) = \frac{1}{2t}- \frac{\sqrt{c}}{t^{r/2}}\frac{w'(\tau(t))}{w(\tau(t))}, 
\end{equation}
we obtain that $h(t)$ is a solution if $z \mapsto w(z)$ satisfies the modified Bessel equation,
\begin{equation}
z^2 w''(z) + z w'(z) - ( z^2 + \nu^2 ) w(z) = 0.
\end{equation}
The modified Bessel functions $I_{\nu}$ and $K_\nu$ are a basis of solutions of the above. In the following, recall that $I_\nu$ and $K_\nu$ are real and positive for $\nu>-1$ and $z>0$ \cite[Sec. 9.6]{AbSteg}. Thus, the general solution of the Riccati equation \eqref{eq:lemmariccatisuper} is given by \eqref{eq:ansatz}, with
\begin{equation}
w = a I_\nu + b K_{\nu}, \qquad a,b \in \R.
\end{equation}
Consider first the case $a=0$, corresponding to the solution with initial datum $h_\varepsilon = h_\varepsilon^*(c,r)$,
\begin{equation}
h(t) = \frac{1}{2t} - \frac{\sqrt{c}}{t^{r/2}}\frac{K_\nu'(\tau(t))}{K_\nu(\tau(t))}.
\end{equation}
Since $K_\nu(z) >0$ for $\nu > -1$ and $z >0$, we have $t_* = 0$.

We proceed by assuming that $a \neq 0$,  and thus $h_\varepsilon \neq h_\varepsilon^*(c,r)$. In particular, since \eqref{eq:ansatz} is invariant under linear rescaling of $w$, we fix $a =-1$. Routine computations show that \eqref{eq:ansatz} is the unique solution corresponding to initial condition $h_\varepsilon$ if
\begin{equation}
w = - I_\nu + b K_\nu, \qquad \text{with} \qquad b = \frac{I_\nu(\tau(\varepsilon))}{K_\nu(\tau(\varepsilon))} \times \frac{h_\varepsilon-\tilde{h}_\varepsilon^*(c,r)}{h_\varepsilon-h_\varepsilon^*(c,r)},
\end{equation}
where we set
\begin{equation}
\tilde{h}_\varepsilon^*(c,r)=\frac{1}{2\varepsilon} - \frac{\sqrt{c}}{\varepsilon^{r/2}}\frac{I'_\nu(\tau(\varepsilon))}{I_\nu(\tau(\varepsilon))}. 
\end{equation}
Such a solution has a blow-up time $0<t_* <\varepsilon$ if and only if $t_*<\varepsilon$ is solution of
\begin{equation}\label{eq:tstar}
w(\tau(t_*)) = 0 \qquad \Leftrightarrow \qquad  b = \frac{I_\nu(\tau(t_*))}{K_\nu(\tau(t_*))}.
\end{equation}
We claim that the above hold if and only if  $h_\varepsilon > h_\varepsilon^*(c,r)$. In fact, using the relations \cite[Eqs.\ 9.6.26]{AbSteg}, and the fact that $I_\nu(z),K_\nu(z)>0$ for $\nu >-1$ and $z>0$, we deduce that
\begin{equation}\label{eq:inequalitiesbessel}
I_\nu'(z)>0, \qquad \text{and} \qquad K_\nu'(z) <0, \qquad \text{for } \nu>0,\, z>0.
\end{equation}
As a consequence the map $z \mapsto I_\nu(z)/K_\nu(z)$ is monotone increasing. By \cite[Eqs.\ 9.6.7 and 9.6.9]{AbSteg}, we have $\lim_{z \to 0^+} I_\nu(z)/ K_\nu(z) = 0$. Moreover, the function $t \mapsto \tau(t)$ is monotone decreasing. Then \eqref{eq:tstar} will have a solution $0<t_* < \varepsilon$ if and only if
\begin{equation}
b > \frac{I_\nu(\tau(\varepsilon))}{K_\nu(\tau(\varepsilon))}.
\end{equation}
Replacing the explicit expression for $b$, the above condition is equivalent to
\begin{equation}
\frac{h_\varepsilon^*(c,r)-\tilde{h}_\varepsilon^*(c,r)}{h_\varepsilon-h_\varepsilon^*(c,r)}>0.
\end{equation}
By \eqref{eq:inequalitiesbessel}, the numerator of the l.h.s.\ of the above is strictly positive, hence we have blow-up time $0<t_*< \varepsilon$ if $h_\varepsilon> h_\varepsilon^*(c,r)$, as claimed. On the other hand, if $h_\varepsilon \leq h_\varepsilon^*(c,r)$, equation \eqref{eq:tstar} has no solution for $0<t_* < \varepsilon$, and from \eqref{eq:ansatz} we see that the solution $h(t)$ blows-up at $t_* = 0$.

Finally, for any choice of the parameters $a,b$ in $w=a I_\nu + b K_\nu$, we have the asymptotic behavior \cite[Eq.s 9.7.1 and 9.7.3]{AbSteg}:
\begin{equation}
\frac{w'(z)}{w(z)} = 1 +  O(z^{-1}), \qquad z \to +\infty,
\end{equation}
concluding the first part of the proof.

Finally, we prove that $c\mapsto h^*_\varepsilon(c,r)$ is monotone increasing, for any fixed $r>2$ and $\varepsilon>0$. This is implied by the following property of modified Bessel functions:
\begin{equation}\label{eq:propertyBessel}
f(z) = -z \frac{K_\nu'(z)}{K_\nu(z)}\qquad \text{is monotone increasing for all $z>0$ and $\nu \in \R$}.
\end{equation}
Observe that $f(z)$ is well defined as $K_\nu(z) = K_{-\nu}(z)>0$ for all $z>0$ and $\nu \in \R$. In order to prove property \eqref{eq:propertyBessel}, we compute:
\begin{equation}
f'(z) = \frac{z^2 K_\nu'(z)^2 - (z^2+\nu^2) K_\nu(z)^2}{z K_\nu(z)^2},
\end{equation}
where we used the modified Bessel equation to cancel the second derivatives of $K_\nu(z)$.
As we already observed, $K_\nu'(z) <0$ for all $\nu \in \R$ and $z>0$. The fact that $f'(z)>0$ for $z>0$ then follows from the subtle inequality
\begin{equation}
z K_\nu'(z) /K_\nu(z) < - \sqrt{z^2+\nu^2}, \qquad \text{for all $z>0$ and $\nu \in \R$},
\end{equation}
which is proved in \cite[Eq. 2.2]{MR2439440} using a Tur\'an type inequality and a clever trick.
\end{proof}

\begin{proof}[Proof of Theorem~\ref{t:curvosc}]
Let $h_{1,\varepsilon}$ and $h_{2,\varepsilon}$ be, respectively, the smallest and largest eigenvalue of $H(\varepsilon)$. Thanks to the assumption on the sectional curvature \eqref{eq:boundcurv}, we can apply the Riccati comparison result of Lemma~\ref{l:riccaticomp} with 
\begin{gather}
-\frac{a_1^2-1}{4t^2}\,\mathbbold{1} =:R_1(t)  \leq R(t) \leq R_2(t) := -\frac{a_2^2-1}{4t^2} \mathbbold{1}, \\
h_{1,\varepsilon}\, \mathbbold{1} =: H_1(\varepsilon) \leq H(\varepsilon) \leq H_2(\varepsilon):= h_{2,\varepsilon} \,\mathbbold{1},
\end{gather}
which yields $H_1(t) \leq H(t) \leq H_2(t)$, where $H_i(t) = h_i(t)\, \mathbbold{1}$, and $h_i(t)$ is the solution of
\begin{equation}
h^\prime_i + h_i^2 -\frac{a_i^2-1}{4t^2} = 0, \qquad h_i(\varepsilon) = h_{i,\varepsilon} = \frac{1+a_i}{2\varepsilon} + \frac{m_i}{2\varepsilon},
\end{equation}
where the $m_i$'s are defined by the last equality. By the assumption on $H(\varepsilon)$, we have
\begin{equation}
h_{1,\varepsilon} \leq h_{2,\varepsilon} < \frac{1+a_2}{2\varepsilon} \leq \frac{1+a_1}{2\varepsilon}.
\end{equation}
In particular $m_1 \leq  m_2 < 0$. Then, by Lemma~\ref{l:boundingsol}, both solutions $h_i(t)$ are defined on $(0,\varepsilon]$ and have asymptotic behavior 
\begin{equation}\label{eq:asympth}
h_i(t) \sim \frac{1-a_i}{2t} , \qquad \text{for } t \to 0^+.
\end{equation}
For the effective potential along the given geodesic, using Riccati equation, we obtain
\begin{equation}\label{eq:effpotproof}
\Veff = \left(\frac{\tr H}{2}\right)^2 + \left(\frac{\tr H}{2}\right)^\prime = \frac{1}{4}\left[(\tr H)^2 - 2\tr(H^2) - 2 \tr(R) \right].
\end{equation}
The ``curvature component'' of \eqref{eq:effpotproof} is bounded thanks to our curvature assumptions:
\begin{equation}
-2\tr(R) \geq (n-1)\frac{a_2^2-1}{2t^2}.
\end{equation}
By \eqref{eq:asympth}, $h_i(t) \to -\infty$ for $t \to 0^+$ and $i=1,2$. In particular, possibly taking a smaller $\varepsilon$, we have that $h_1(t) \mathbbold{1} \leq H(t) \leq h_2(t) \mathbbold{1} < 0$ on $(0,\varepsilon]$. Denote with $\lambda_j$, for $j=1,\ldots,n-1$ the eigenvalues of $H$. Indeed, we have, for any value of $t \in (0,\varepsilon]$ the inequalities
\begin{equation}\label{eq:stimagood}
h_1 \leq \lambda_j \leq h_2 <0 \qquad \implies \qquad |h_2| \leq |\lambda_j| \leq |h_1|.
\end{equation}
Then, for the ``Hessian component'' of the effective potential \eqref{eq:effpotproof}, we get
\begin{equation}\label{eq:stimagood2}
\begin{aligned}
(\tr H)^2 - 2\tr(H^2) & = \bigg(\sum_{j=1}^{n-1} \lambda_j\bigg)^2 - 2 \sum_{j=1}^{n-1} \lambda_j^2 \\
& = - \sum_{j=1}^{n-1} \lambda_j^2 + 2 \sum_{1\leq i<j \leq n-1}  \lambda_i \lambda_j \\
& =  - \sum_{j=1}^{n-1} \lambda_j^2 + 2\sum_{1\leq i<j\leq n-1}  |\lambda_i | |\lambda_j  | \\
& \geq - (n-1) h_1^2 + (n-1)(n-2)h_2^2 .
\end{aligned}
\end{equation}
Thus, up to taking an possibly smaller $\varepsilon$, there exists $\kappa \geq 0$ such that
\begin{align}
\Veff & \geq \frac{n-1}{4}\left(\frac{a_2^2-1}{2t^2} -h_1^2 +(n-2)h_2^2 \right)  \\
 & \geq \frac{n-1}{16t^2}\left[2(a_2^2-1) -(1-a_1)^2 +(n-2)(1-a_2)^2 \right] - \frac{\kappa}{t}, \qquad \forall t \leq \varepsilon,
\end{align}
where, in the second line, we used the asymptotics of $h_i(t)$. Then, by Theorem~\ref{t:main}, $\Delta$ is essentially self-adjoint if $\Veff \geq \tfrac{3}{4t^2}- \frac{\kappa}{t}$, which yields the statement.
\end{proof}

\begin{proof}[Proof of Theorem~\ref{t:curvoscsuper}]
The proof follows the same comparison ideas of the one of Theorem~\ref{t:curvosc}. We apply the Riccati comparison result of Lemma~\ref{l:riccaticomp} with 
\begin{gather}
-\frac{c_1}{t^r}\,\mathbbold{1} =:R_1(t)  \leq R(t) \leq R_2(t) := -\frac{c_2}{t^r} \mathbbold{1}, \\
h_{1,\varepsilon} \, \mathbbold{1} =: H_1(\varepsilon) \,\mathbbold{1} \leq H(\varepsilon) \leq H_2(\varepsilon):= h_{2,\varepsilon}\,\mathbbold{1},
\end{gather}
which yields $H_1(t) \leq H(t) \leq H_2(t)$, where $H_i(t) = h_i(t)\, \mathbbold{1}$ and $h_i(t)$ is the solution of
\begin{equation}
h^\prime_i + h_i^2 -\frac{c}{t^r} = 0, \qquad h_i(\varepsilon) = h_{i,\varepsilon} = h^*_{\varepsilon}(c_i,r) + m_i,
\end{equation}
where the $m_i$'s are defined by the last equality. By the assumption on $H$, we have
\begin{equation}
h_{1,\varepsilon}\leq h_{2,\varepsilon} \leq h^*_{\varepsilon}(c_2,r) \leq h^*_{\varepsilon}(c_1,r).
\end{equation}
The last inequality follows since $c \mapsto h^*_\varepsilon(c,r)$ is monotone increasing by Lemma~\ref{l:boundingsol2}, and $c_1 \geq c_2$ by assumption.
In particular $m_1 \leq  m_2 \leq 0$. Then, by Lemma~\ref{l:boundingsol2}, both solutions $h_i(t)$ are defined on $(0,\varepsilon]$ and have asymptotic behavior 
\begin{equation}\label{eq:asympth-super}
h_i(t) \sim -\frac{\sqrt{c_i}}{t^{r/2}} , \qquad \text{for } t \to 0^+.
\end{equation}
For the effective potential along the given geodesic, using Riccati equation, we obtain
\begin{equation}\label{eq:effpotproof-super}
\Veff = \frac{1}{4}\left[(\tr H)^2 - 2\tr(H^2) - 2 \tr(R) \right].
\end{equation}
By \eqref{eq:asympth-super}, $h_i(t) \to -\infty$ for $t \to 0^+$ and $i=1,2$. Hence, up to taking a smaller $\varepsilon$, the same argument leading to the estimate \eqref{eq:stimagood2} holds. In particular, we obtain
\begin{equation}
\Veff \geq \frac{n-1}{4}\left(- \frac{2\tr(R)}{n-1} - h_1^2 + (n-2)h_2^2 \right).
\end{equation}
Up to taking a possibly smaller $\varepsilon$, there exists $\kappa \geq 0$ such that
\begin{equation}
\Veff \geq \frac{(n-1)}{4 t^r}\left( nc_2 - c_1 - \kappa t \right), \qquad \forall t \leq \varepsilon,
\end{equation}
where we used the asymptotics \eqref{eq:asympth-super} and the assumption on the curvature. Recall that $r>2$. Then, if $n c_2 > c_1$ we can apply Theorem~\ref{t:main}, yielding the statement.
\end{proof}
\section{Almost-Riemannian geometry}\label{s:arg}

In this section we show that assumption $(\star)$ is verified for almost-Riemannian structures with no tangency points, we prove Theorem~\ref{t:regularARS-intro} for regular ARS, and then we discuss some examples of non-regular ARS and open problems.

\subsection{Preliminaries on almost-Riemannian structures}

Almost-Riemannian geometry has been introduced in \cite{ABS-Gauss-Bonnet} and describes a large class of singular Riemannian structures. Roughly speaking, an almost-Riemannian structure on a smooth $n$-dimensional manifold $N$ is locally given by a generating family of smooth vector fields $X_1,\ldots,X_n$. In the regular region where the rank of this family is maximal, it defines a Riemannian structure which however is singular on the set where some of them become linearly dependent.
\begin{definition}
Let $N$ be a smooth and connected manifold of dimension $n$. An almost-Riemannian structure (ARS) on $N$ is a triple $\mathcal{S}=(E,\xi,\cdot)$, where $\pi_E : E \to N$ is a vector bundle of rank $n$, and $\cdot$ is a smooth scalar product on the fibers of $E$. Finally, $\xi: E \to TN$ is a vector bundle morphism. That is, $\xi$ is a fiber-wise linear map such that, letting $\pi : TN \to N$ be the canonical projection,  the following diagram commutes:
\begin{equation*}
\begin{tikzcd}
E \arrow{rr}{\xi}\arrow[swap]{dr}{\pi_E}
& & TN \arrow{ld}{\pi} \\
& N 
\end{tikzcd}
\end{equation*}
Moreover, we assume the \emph{Lie bracket generating condition}, that is
\begin{equation}\label{eq:horm}
\mathrm{Lie}(\xi(\Gamma(E)))|_q = T_q N, \qquad \forall q \in N,
\end{equation}
where $\Gamma(E)$ denotes the $C^\infty(N)$-module of smooth sections of $E$, and $\mathrm{Lie}(\xi(\Gamma(E)))|_q$ denotes the smallest Lie algebra containing $\xi(\Gamma(E)) \subseteq \Gamma(TN)$, evaluated at $q$.
\end{definition}

Consider a set $\sigma_1,\ldots,\sigma_n$ of smooth local sections of $E$, defined on $\mathcal{O} \subseteq N$, and orthonormal with respect to the scalar product on $E$. The vector fields $X_i:= \xi \circ \sigma_i$ constitute a \emph{local generating family}. On $\mathcal{O}$, condition \eqref{eq:horm} reads
\begin{equation}\label{eq:horm2}
\mathrm{Lie}(X_1,\ldots,X_n)|_q = T_q N , \qquad \forall q \in \mathcal{O}.
\end{equation}

When possible, an efficient way to define an ARS is by giving a \emph{global generating family} of smooth vector fields $X_1,\ldots,X_n \in \Gamma(TN)$ satisfying \eqref{eq:horm2}. In fact, by setting $E = N \times \R^n$ and letting $\sigma_i(p) = (p,e_i)$, for $i=1,\ldots,n$, there exists a unique vector bundle morphism $\xi:E \to TN$ such that $X_i = \xi \circ \sigma_i$. Then, the ARS structure defined on $N$ by the global generating family is $\mathcal{S} = (E,\xi,\cdot)$, where $\cdot$ is the standard Euclidean product on the fibers of $E$.

The \emph{subspace of admissible directions} at $q \in N$ is $\distr_q := \xi(E_q) \subseteq T_q N$, where $E_q = \pi_E^{-1}(q)$.
The \emph{singular set} $\mathcal{Z} \subset N$ is the set of points where $\dim \distr_q < n$.

\begin{definition}
Assume that the singular set $\mathcal{Z}$ is a smooth embedded hypersurface. A point $q \in \mathcal{Z}$ is a \emph{tangency point} if $\distr_q \subseteq T_q\mathcal{Z}$.
\end{definition}
Tagency points have deep consequences on the local structure of the almost-Riemannian metric structure, and have been studied, in the $2$-dimensional case, in \cite{ARS-tangency,Sphere-tangency-2D}. If $\mathcal{Z}$ is a smooth, embedded submanifold, for all $q \in \mathcal{Z}$ there exists a non-zero $\lambda \in T_q^*N$, defined up to multiplication by a constant, such that $\lambda(T_q \mathcal{Z}) = 0$. Thus, $q$ is a tangency point if and only if $\lambda(\distr_q) = 0$.

\subsubsection{Almost-Riemannian metric structure}
For any $q \in N$ and $v \in \distr_q$, define the norm
\begin{equation}
|v|_{}^2:= \inf\{ v \cdot v  \mid u \in E_q, \quad \xi(u) = v\}.
\end{equation}
One can check that the above norm satisfies the parallelogram law, and hence it is defined by a scalar product on $\distr_q$, denoted with the symbol $g$. In particular, $g$ is a  smooth Riemannian metric on the regular region $M = N \setminus \mathcal{Z}$, but is singular on $\mathcal{Z}$ where $\distr_q \subset T_q M$ strictly. Notice that any local generating family $X_1,\ldots,X_n$ is orthonormal with respect to $g$ on the regular region. Despite the singularity of $g$, one can define a global metric structure on $N$ as we now explain.

Let $I$ be an interval. An absolutely continuous curve $\gamma : I \to N$ is \emph{admissible} if $\dot\gamma(t) \in \distr_{\gamma(t)}$ for a.e. $t \in I$. In this case, its \emph{length} is
\begin{equation}
\ell(\gamma) := \int_I |\dot\gamma(t)|_{}\,dt.
\end{equation}
Since $\ell$ is invariant under reparametrization of $\gamma$, when dealing with minimization of length we consider only intervals of the form $I=[0,T]$, for some fixed $T>0$. We define the \emph{almost-Riemannian distance} as
\begin{equation}
d_{\mathcal{S}}(p,q):=\inf\{\ell(\gamma)\mid \gamma \text{ is admissible},\quad \gamma(0) = p, \quad \gamma(1) = q\}.
\end{equation}
Under the bracket-generating condition \eqref{eq:horm}, the Chow-Rashevskii Theorem implies that $d_{\mathcal{S}}:N \times N \to \R$ is finite and continuous (see, e.g., \cite{nostrolibro}). Thus, $N$ is admissible-path connected and the metric space $(N,d_{\mathcal{S}})$ has the same topology of $N$. We say that the ARS is \emph{complete} if $(N,d_{\mathcal{S}})$ is complete as a metric space. Notice that, being it a locally compact length space, completeness is equivalent to the compactness of all closed balls, and implies the existence of admissible minimizing curves between any pair of points \cite[Thm. 2.5.28]{BBI}, possibly crossing the singular region $\mathcal{Z}$.

\subsubsection{Almost-Riemannian gradient}
The gradient of a smooth function $f$ is the smooth vector field $\nabla f \in \Gamma(\distr)$ such that
\begin{equation}
g(\nabla f,W) = W(f) = d f (W), \qquad \forall W \in \Gamma(\distr).
\end{equation}

Indeed, $\nabla f$ coincides with the Riemannian gradient on the complement of $\mathcal{Z}$. The gradient is smooth as a consequence of the next formula.
\begin{lemma}
If $X_1,\ldots,X_n$ is a local generating family for the ARS, then
\begin{equation}
\nabla f = \sum_{i=1}^n X_i(f) X_i, \qquad |\nabla f |_{}^2 = \sum_{i=1}^n X_i(f)^2.
\end{equation}
\end{lemma}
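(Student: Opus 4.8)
The plan is to verify the first identity by checking directly that the manifestly smooth vector field $Y:=\sum_{i=1}^n X_i(f)\,X_i$ satisfies the defining property of the gradient, and then to read off the norm identity as a one‑line consequence. Away from $\mathcal{Z}$ there is nothing to do: there $X_1,\dots,X_n$ is a $g$‑orthonormal frame, so $\nabla f=\sum_i g(\nabla f,X_i)X_i=\sum_i X_i(f)X_i$. Thus the only genuine point is the behaviour on the singular set $\mathcal{Z}$, where the $X_i$ are not linearly independent and $g$ is a scalar product only on the proper subspace $\distr_q\subsetneq T_qN$; this is the step to be careful about, and it is handled by the quotient‑norm description of $g$ on $\distr_q$.

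First I would recall that structure. Let $\sigma_1,\dots,\sigma_n$ be the orthonormal sections of $E$ with $X_i=\xi\circ\sigma_i$, and set $K_q:=\ker(\xi_q)\subseteq E_q$. Since $|v|^2=\inf\{u\cdot u:\xi(u)=v\}$ and the preimage $\xi_q^{-1}(v)$ is the affine subspace $u_0+K_q$, the infimum is attained at the unique point of $\xi_q^{-1}(v)$ lying in $K_q^\perp$; hence $\xi_q$ restricts to a linear bijection $K_q^\perp\to\distr_q$ preserving the norm, and by polarization $g(\xi(u),\xi(u'))=u\cdot u'$ for all $u,u'\in K_q^\perp$. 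Next, set $Z:=\sum_i X_i(f)\,\sigma_i\in\Gamma(E)$, a smooth section with $\xi(Z)=Y$; this simultaneously shows that $Y$ is a smooth vector field (as $\xi$ is a smooth bundle morphism), accounting for the smoothness of $\nabla f$ asserted in the text. Moreover, for every $q$ and every $k\in K_q$ we have $Z_q\cdot k=df_q(\xi(k))=0$, so $Z_q\in K_q^\perp$.

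Then I would verify that $g(Y,W)=W(f)$ for every $W\in\distr_q$. Writing $W=\xi(u)$ with $u=u_\perp+k$, $u_\perp\in K_q^\perp$, $k\in K_q$, one has $W=\xi(u_\perp)$ and
$$g(Y_q,W)=g(\xi(Z_q),\xi(u_\perp))=Z_q\cdot u_\perp=Z_q\cdot u=df_q(\xi(u))=W(f),$$
using the isometry property and $Z_q\perp k$. Since $g$ is positive definite on $\distr_q$, this property determines $Y_q$ uniquely among vectors of $\distr_q$, so $Y=\nabla f$ on all of $N$, which is the first identity. Finally, applying the defining property with $W=\nabla f\in\Gamma(\distr)$ gives $|\nabla f|^2=g(\nabla f,\nabla f)=(\nabla f)(f)=df(\nabla f)=\sum_i X_i(f)\,df(X_i)=\sum_i X_i(f)^2$, the second identity. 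Everything beyond the quotient‑norm lemma on $\distr_q$ is linear algebra and the already available Leibniz/duality relations, so I expect no further difficulty.
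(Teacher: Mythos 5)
Your proof is correct and follows essentially the same route as the paper: both arguments rest on the quotient-norm description of $g$ on $\distr_q$ via the orthogonal projection onto $\ker\xi_q$ (the paper phrases it as the polarization identity $g(V,W)=v\cdot w-\Pi(v)\cdot\Pi(w)$, you as the isometry $\xi_q:K_q^\perp\to\distr_q$ together with the observation $Z_q\in K_q^\perp$, which are the same fact). The norm identity and the smoothness remark are handled as in the paper, so nothing further is needed.
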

\begin{rmk}
The relevance of the above formula, and also of Lemma~\ref{l:speed} below, is that they hold also on the singular set $\mathcal{Z}$, where $X_1,\ldots,X_n$ are not independent.
\end{rmk}
\begin{proof}
Let $V,W \in \distr_q$, such that $V= \xi(v)$ and $W=\xi(w)$, with $w,v \in E_q$. Let $\Pi :E_q \to E_q$ be the orthogonal projection on $\ker \xi|_{E_q}$. In particular, $|V|_{} = | v - \Pi(v)|_{E_q}$ and $|W |_{} = |w - \Pi(w)|_{E_q}$, where $|\cdot|_{E_q}$ denotes the norm on $E_q$. By polarization, we obtain
\begin{equation}\label{eq:polarization}
g(V,W) = v \cdot w - \Pi(v) \cdot \Pi(w).
\end{equation}
We fix the representative $w^* :=w - \Pi(w)$ for $W=\xi(w^*)$ with the property $\Pi(w^*) = 0$. Moreover, fix $V = \sum_{i=1}^n v_i X_i$, with $v_i:=X_i(f)$. By \eqref{eq:polarization}, we have
\begin{equation}
g(V,W) = g(\sum_{i=1}^n v_i X_i, \sum_{j=1}^n w_j^* X_j) = \sum_{i=1}^n v_i w_i^* = \sum_{i=1}^n X_i(f) w_i^* = W(f).
\end{equation}
Since this holds for any $W \in \distr_q$, we obtain the statement.
\end{proof}

\subsubsection{Geodesics and Hamiltonian flow}
We recall basic notions on minimizing curves in almost-Riemannian geometry. This is a particular case of the length minimization problem on rank-varying sub-Riemannian structures, and we refer to \cite{nostrolibro,AS-GeometricControl} for further details.

A \emph{geodesic} is an admissible curve $\gamma :[0,T] \to N$ that locally minimizes the length between its endpoints. For what concerns necessary conditions for optimality, define the \emph{almost-Riemannian Hamiltonian} as the smooth function $H : T^*N \to \R$ such that
\begin{equation}
H(\lambda) := \frac{1}{2}\sum_{i=1}^n \langle \lambda, X_i \rangle^2, \qquad \lambda \in T^*N,
\end{equation}
where $X_1,\ldots,X_n$ is a local generating family for the ARS, and $\langle \lambda, \cdot \rangle $ denotes the action of covectors on vectors. If $\sigma$ denotes the canonical symplectic $2$-form on $T^*N$, the \emph{Hamiltonian vector field} $\vec{H}$ is defined by $\sigma(\cdot, \vec{H}) = dH$. Then, Hamilton's equations are
\begin{equation}\label{eq:Hamiltoneqs}
\dot{\lambda}(t) = \vec{H}(\lambda(t)).
\end{equation}
Solutions $\lambda : [0,T] \to T^*N$ of~\eqref{eq:Hamiltoneqs} are called \emph{normal extremals}, their projections $\gamma(t) := \pi(\lambda(t))$ on $N$ are locally minimizing curves, and are called \emph{normal geodesics}.
\begin{lemma}\label{l:speed}
Let $\lambda(t) =e^{t\vec{H}}(\lambda)$ be a normal extremal, and $\gamma(t) = \pi(\lambda(t))$ be the corresponding normal geodesic. Its tangent vector is given by
\begin{equation}
\dot{\gamma}(t) = \sum_{i=1}^n \langle\lambda(t),X_i(\gamma(t))\rangle X_i(\gamma(t)),
\end{equation}
and its speed is given by $| \dot\gamma(t) |_{} = \sqrt{2H(\lambda)}$. In particular $\ell(\gamma|_{[0,T]}) = \sqrt{2H(\lambda)} T$. 
\end{lemma}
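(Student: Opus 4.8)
The plan is to reduce everything to a local computation with a generating family $X_1,\ldots,X_n$ and the associated fiber-linear Hamiltonians $h_i : T^*N \to \R$, $h_i(\lambda) := \langle \lambda, X_i\rangle$, so that $H = \tfrac12 \sum_{i=1}^n h_i^2$. I would invoke two standard facts of symplectic geometry: first, the map $f \mapsto \vec{f}$ is a derivation of the pointwise product, $\vec{fg} = f\vec{g} + g\vec{f}$, which follows at once from $\sigma(\cdot,\vec{fg}) = d(fg) = f\,dg + g\,df$; second, for a fiber-linear Hamiltonian $h_i$ one has $\pi_* \vec{h}_i = X_i$, as one checks in canonical coordinates $(x,p)$ on $T^*N$. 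Applying the first fact gives $\vec{H} = \sum_{i} h_i \vec{h}_i$, and then, using $\dot\gamma(t) = \pi_*\dot\lambda(t) = \pi_*\vec{H}(\lambda(t))$ by \eqref{eq:Hamiltoneqs} together with the second fact,
\begin{equation*}
\dot\gamma(t) = \sum_{i=1}^n h_i(\lambda(t))\, X_i(\gamma(t)) = \sum_{i=1}^n \langle \lambda(t), X_i(\gamma(t))\rangle\, X_i(\gamma(t)),
\end{equation*}
which is the first claim. Since all the objects on the right-hand side are intrinsically defined, this identity holds everywhere on $N$, including on $\mathcal{Z}$ where the $X_i$ fail to be linearly independent.

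For the speed, fix $t$, set $q = \gamma(t)$, and consider $v := \sum_{i=1}^n h_i(\lambda(t))\,\sigma_i(q) \in E_q$, so that $\xi(v) = \dot\gamma(t)$ by the previous formula and $|v|_{E_q}^2 = \sum_i h_i(\lambda(t))^2$ since the $\sigma_i$ are orthonormal. The key point is that $v$ realizes the infimum in the definition $|\dot\gamma(t)|_{}^2 = \inf\{\, u\cdot u \mid u \in E_q,\ \xi(u) = \dot\gamma(t)\,\}$, i.e.\ that $v$ is orthogonal to $\ker \xi|_{E_q}$: indeed, for any $u \in \ker \xi|_{E_q}$, writing $u = \sum_i u_i \sigma_i$, one has $v\cdot u = \sum_i h_i(\lambda(t))\, u_i = \langle \lambda(t), \textstyle\sum_i u_i X_i(q)\rangle = \langle \lambda(t), \xi(u)\rangle = 0$. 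Hence $v$ is the minimal-norm representative of $\dot\gamma(t)$ and $|\dot\gamma(t)|_{}^2 = |v|_{E_q}^2 = \sum_i \langle\lambda(t), X_i(\gamma(t))\rangle^2 = 2 H(\lambda(t))$.

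Finally, $H$ is a first integral of the normal extremal flow, since $\tfrac{d}{dt} H(\lambda(t)) = dH\big(\vec{H}(\lambda(t))\big) = \sigma\big(\vec{H},\vec{H}\big)(\lambda(t)) = 0$, so $H(\lambda(t)) \equiv H(\lambda)$. Combining this with the previous paragraph gives $|\dot\gamma(t)|_{} = \sqrt{2H(\lambda)}$ for all $t$, and integrating over $[0,T]$ yields $\ell(\gamma|_{[0,T]}) = \sqrt{2H(\lambda)}\,T$. I expect the only step that requires genuine care to be the identification of $v$ as the minimal-norm representative in the second paragraph, where the precise definition of the almost-Riemannian norm on $\distr_q$ is used; the rest is routine symplectic bookkeeping.
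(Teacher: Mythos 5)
Your proof is correct. The first claim is obtained exactly as in the paper: the paper computes $\dot x = \partial H/\partial p$ in canonical coordinates, which is the coordinate version of your identities $\vec{H}=\sum_i h_i \vec{h}_i$ and $\pi_*\vec{h}_i = X_i$; the two computations are the same. For the speed, your route differs slightly in mechanism from the paper's: the paper uses the variational characterization $H(\lambda)=\max_{u\in\R^n}\sum_i\bigl(u_i\langle\lambda,X_i\rangle-\tfrac12 u_i^2\bigr)$ together with $\sum_i v_i\langle\lambda,X_i\rangle = 2H(\lambda)$ for any representative $v$ of $\dot\gamma$, to conclude $\sum_i v_i^2\ge 2H(\lambda)$ with equality at $v_i=\langle\lambda,X_i\rangle$; you instead check directly that the lift $v=\sum_i h_i(\lambda)\sigma_i$ is orthogonal to $\ker\xi|_{E_q}$ and hence is the minimal-norm element of the affine fiber $v+\ker\xi|_{E_q}$. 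The two arguments are equivalent (your orthogonality computation $v\cdot u=\langle\lambda,\xi(u)\rangle$ is exactly the identity the paper exploits), and your version has the mild advantage of matching the projection argument the paper itself uses in the gradient lemma. Your explicit remark that $H$ is conserved along the flow, so that $2H(\lambda(t))=2H(\lambda)$, fills in a step the paper leaves implicit; everything checks out.
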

\begin{proof}
In canonical coordinates $(p,x)$ in a neighborhood of $\lambda$, we denote $\lambda(t) = (p(t),x(t))$. In particular, by Hamilton's equations, we have
\begin{equation}
\dot{x}(t) = \frac{\partial H}{\partial p}(p(t),x(t)) = \sum_{i=1}^n (p^*(t) X_i(x(t))) X_i(x(t)),
\end{equation}
which yields the first formula. To prove the second statement, observe that
\begin{equation}
H(\lambda) = \max_{u \in \R^n} \sum_{i=1}^n \left(u_i \langle \lambda, X_i \rangle - \frac{1}{2}u_i^2\right).
\end{equation}
In particular, for any $v \in \R^n$ such that $\sum_{i=1}^n v_i X_i = \sum_{i=1}^n \langle \lambda, X_i \rangle X_i$, we have
\begin{equation}
H(\lambda)  \geq \sum_{i=1}^n \left(v_i \langle \lambda, X_i \rangle - \frac{1}{2}v_i^2 \right)= \sum_{i=1}^n \left(\langle \lambda,X_i\rangle^2 - \frac{1}{2} v_i^2 \right) = 2H(\lambda) - \frac{1}{2} \sum_{i=1}^n v_i^2.
\end{equation}
Hence, $2H(\lambda) \leq \sum_{i=1}^n v_i^2$, and we have equality if and only if $v_i = \langle \lambda,X_i\rangle$. This means that $2H(\lambda)$ realizes the almost-Riemannian squared norm of $\dot\gamma=\sum_{i=1}^n \langle \lambda, X_i \rangle X_i$.
\end{proof}

Let $q \in N$ and $p \in N \setminus \mathcal{Z}$. A standard argument employing the Lagrange multipliers rule shows that minimizing geodesics joining $p$ with $q$ must be normal geodesics. In particular, this is the case for any curve minimizing the length between $\mathcal{Z}$ and $N \setminus \mathcal{Z}$. When $p$ and $q$ are both in $\mathcal{Z}$, the presence of the so-called \emph{abnormal} geodesics must be taken in account. These are another class of minimizing curves, well known in sub-Riemannian geometry, that might not follow the Hamiltonian dynamic of~\eqref{eq:Hamiltoneqs}. Since we never deal with the distance between two points on $\mathcal{Z}$, abnormal geodesics do not play any role in what follows.

\begin{definition}
The \emph{exponential map} $\exp_{q} : D_q \to N$, with base $q \in N$ is
\begin{equation}
\exp_q (\lambda) := \pi \circ e^{\vec{H}}(\lambda), \qquad \lambda \in D_q,
\end{equation}
where $D_q \subseteq T_q^* N$ is the set of covectors such that the solution $t \mapsto e^{t \vec{H}}(\lambda)$ of~\eqref{eq:Hamiltoneqs} with initial datum $\lambda$ is well defined up to time $T=1$. 
\end{definition}
If $(N,d_{\mathcal{S}})$ is complete, by the Hopf-Rinow theorem for length spaces \cite[Thm. 2.5.28]{BBI}, normal geodesics can be prolonged on the interval $[0,+\infty)$, and $D_q = T_q^*N$ for all $q \in N$.

\begin{rmk}
In the Riemannian region $M = N \setminus \mathcal{Z}$, due to the canonical identification $TM \simeq T^*M$, the exponential map defined above is just the ``dual'' of the standard exponential one, and Hamilton's equations \eqref{eq:Hamiltoneqs} are equivalent to the Riemannian geodesic equations. However, the duality fails on $\mathcal{Z}$, and only the ``cotangent'' viewpoint survives.
\end{rmk}

\subsection{Almost-Riemannian metric structure versus metric completion}

Consider a complete al\-most-Rieman\-nian structure $\mathcal{S}$ on a smooth manifold $N$, with singular set $\mathcal{Z}$ consisting of a smooth embedded hypersurface. On $M = N \setminus \mathcal{Z}$, we consider the induced Riemannian metric structure, hereby denoted $(M,d_g)$ to avoid confusion. When $\mathcal{Z} \neq \emptyset$, the metric $d_g$ is different from the restriction of the almost-Riemannian one to $M$ and, as a consequence, the metric completion $(\hat{M},\hat{d}_g)$ is different from $(N,d_{\mathcal{S}})$.
\begin{example}
Consider the torus $\mathbb{S}^1_\theta \times \mathbb{S}^1_{\varphi}$, with the ARS given by the global generating family $\{\partial_{\theta}$, $\sin(\theta/2)^2 \partial_{\varphi}\}$. In this case $\mathcal{Z} = \{0\} \times \mathbb{S}^1$, and $\hat{M}$ is a closed cylinder $[0,2\pi]\times \mathbb{S}^1$. Let $p_\pm= (\pm \theta,\varphi)$, with $\theta \in (0,\pi/2]$.  Then $d_g(p_+,p_-) = 2\pi - 2\theta$, while the AR distance is $d_{\mathcal{S}}(p_+,p_-) = 2\theta$. On the other hand, for $\theta \in [\pi/2,\pi]$, the two distances coincide.
\end{example}
In order to apply Theorem~\ref{t:main} on the Riemannian region, and in particular to verify assumption $(\star)$, we exploit the relation between the almost-Riemannian metric structure $(N,d_{\mathcal{S}})$ and the metric completion $(\hat{M},\hat{d}_g)$.

Recall that points of $\hat{M}$ are represented by equivalence classes of Cauchy sequences of $(M,d_g)$ which, in particular, are also Cauchy sequences for $d_\mathcal{S}$. Then, consider the map $\pi : \hat{M} \to N$, which assigns to the Cauchy sequence $\{p_n\} \in (\hat{M},\hat{d}_g)$ its limit in $(N,d_\mathcal{S})$. Since $d_\mathcal{S} \leq d_g$ for points in $M$, the map $\pi$ is well defined and
\begin{equation}\label{eq:ineq1}
d_\mathcal{S}(\pi(q),\pi(p)) \leq \hat{d}_g(q,p), \qquad \forall q,p \in \hat{M}.
\end{equation}
In particular, $\pi$ is continuous. By identifying points of $M$ with constant sequences, we have $M \subset \hat{M}$, and the restriction $\pi|_M$ is the identity. Notice that, if $q,p$ belong to different connected components of $\hat{M}$ (which might occur even if we assumed that $N$ is connected), the inequality \eqref{eq:ineq1} is strict, as $\hat{d}_g(q,p) = +\infty$, while $d_{\mathcal{S}}$ is always finite.

Even though $d_\mathcal{S}$ and $\hat{d}_g$ do not agree on $M$, the distance from the metric boundary
\begin{equation}
\delta(p) := \inf\{\hat{d}_g(\tilde{q},p) \mid \tilde{q} \in \partial \hat M\}, \qquad p \in \hat{M},
\end{equation}
and the almost-Riemannian distance from $\mathcal{Z}$,
\begin{equation}
\delta_{\mathcal{S}}(p):= \inf\{d_{\mathcal{S}}(q,p) \mid q \in \mathcal{Z}\}, \qquad p \in N,
\end{equation}
do agree, as a consequence of the next Lemma. We stress that the following holds true even in presence of tangency points.

\begin{lemma}\label{l:relation} 
For any complete ARS, the following equality holds,
\begin{equation}\label{eq:eq2}
\delta(p) = \delta_{\mathcal{S}}(\pi(p)), \qquad \forall p \in \hat{M}.
\end{equation}
\end{lemma}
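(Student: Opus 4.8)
The plan is to prove the two inequalities $\delta(p) \leq \delta_{\mathcal{S}}(\pi(p))$ and $\delta(p) \geq \delta_{\mathcal{S}}(\pi(p))$ separately. For a point $p \in \hat M$ represented by a Cauchy sequence $\{p_n\} \subset M$ for $\hat d_g$, both distances should be understood as limits along this sequence, using continuity: $\delta(p) = \lim_n \delta(p_n)$ (which is immediate since $\delta$ is $1$-Lipschitz for $\hat d_g$) and $\delta_{\mathcal{S}}(\pi(p)) = \lim_n \delta_{\mathcal{S}}(p_n)$ (since $\delta_{\mathcal{S}}$ is $1$-Lipschitz for $d_{\mathcal{S}}$ and $\pi(p_n) = p_n \to \pi(p)$). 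So it suffices to compare $\delta(q)$ with $\delta_{\mathcal{S}}(q)$ for $q \in M$, and the content is that these two genuinely coincide on the Riemannian region.

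First, the inequality $\delta_{\mathcal{S}}(q) \leq \delta(q)$ for $q\in M$. Take any Cauchy sequence $\{q_n\}$ for $\hat d_g$ converging to a metric-boundary point, realizing $\delta(q)$ in the sense that $\hat d_g(q, \{q_n\}) \to \delta(q)$; by the curve description of $\delta$ in Lemma~\ref{l:dist} we may instead take Riemannian-admissible curves $\gamma$ in $M$ from $q$ heading to the metric boundary with $\ell_g(\gamma) \to \delta(q)$. Since every such curve is in particular an almost-Riemannian admissible curve with $|\dot\gamma|_{} = |\dot\gamma|_g$ on $M$, and since its endpoint-limit in $(N,d_{\mathcal{S}})$ must lie in the closure of $M$ but not in $M$ itself — hence in $\mathcal{Z}$, because $N = M \sqcup \mathcal{Z}$ — we get a curve (or sequence of curves) joining $q$ to $\mathcal{Z}$ in $N$ of almost-Riemannian length $\to \delta(q)$. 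Thus $\delta_{\mathcal{S}}(q) \leq \delta(q)$. One must be slightly careful that the endpoint does land on $\mathcal{Z}$: a $\hat d_g$-Cauchy sequence escaping every compact subset of $M$ has a $d_{\mathcal{S}}$-limit in $N$ (by completeness of the ARS) which cannot be an interior point of $M$, so it is in $\mathcal{Z}$.

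The reverse inequality $\delta(q) \leq \delta_{\mathcal{S}}(q)$ is the more delicate one and I expect it to be the main obstacle, precisely because $d_{\mathcal{S}} \leq d_g$ can be strict: an almost-Riemannian minimizing curve from $q$ to $\mathcal{Z}$ could a priori be "short" only by exploiting the singular set, in a way that does not produce a short Riemannian curve. The key observation to rule this out is that a length-minimizing almost-Riemannian curve $\gamma:[0,1]\to N$ from $q\in M$ to a nearest point $z\in\mathcal{Z}$ must be a normal geodesic (by the Lagrange multiplier argument recalled before the statement, since one endpoint lies off $\mathcal{Z}$), and, being minimizing to $\mathcal{Z}$, its interior $\gamma((0,1))$ cannot meet $\mathcal{Z}$ — otherwise we could truncate it and get a strictly shorter curve to $\mathcal{Z}$. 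Hence $\gamma|_{[0,1)}$ is entirely contained in $M$, where the almost-Riemannian and Riemannian length functionals agree pointwise ($|\dot\gamma|_{} = |\dot\gamma|_g$), so $\gamma|_{[0,1-s]}$ is a Riemannian curve in $M$ of length $\leq \delta_{\mathcal{S}}(q)$, and as $s\to 0^+$ its endpoint $\gamma(1-s)$ forms a $d_g$-Cauchy sequence (its $d_g$-length from $q$ is bounded and the tail lengths $\to 0$) whose class in $\hat M$ lies on the metric boundary (its $d_{\mathcal{S}}$-limit is $z\in\mathcal{Z}\not\subset M$). Therefore $\delta(q) \leq \ell_g(\gamma|_{[0,1)}) = \delta_{\mathcal{S}}(q)$. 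If the minimizer is not unique or if completeness only gives minimizers between points and one worries whether a minimizer to the closed set $\mathcal{Z}$ exists, one replaces $\gamma$ by a minimizing sequence $\gamma_n$ with $\ell(\gamma_n)\to\delta_{\mathcal{S}}(q)$ and endpoints $z_n\in\mathcal{Z}$, applies the truncation argument to each, and passes to the limit; the only genuinely technical point is the compactness/separation-from-$\mathcal{Z}$ argument guaranteeing the truncated curves' endpoints represent boundary points of $\hat M$, which is the same kind of reasoning as in \cite[Prop.~2.5.22]{BBI} used earlier in the paper. Combining the two inequalities with the limiting argument gives \eqref{eq:eq2}.
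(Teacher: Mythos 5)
Your proposal is correct and follows essentially the same route as the paper: both inequalities come from comparing lengths of admissible curves truncated at their first (resp.\ last) contact with $\mathcal{Z}$, so that they live in $M$ where the Riemannian and almost-Riemannian length functionals coincide, together with the observation that the truncated endpoints form a $d_g$-Cauchy sequence whose class lies in $\partial\hat M$ because its $d_{\mathcal{S}}$-limit is in $\mathcal{Z}$. The only differences are cosmetic: the paper skips your detour through minimizing normal geodesics and works directly with arbitrary near-minimizing admissible curves (exactly the fallback you describe at the end), and it obtains the easy inequality $\delta_{\mathcal{S}}(\pi(p))\le\delta(p)$ in one line from $d_{\mathcal{S}}(\pi(\cdot),\pi(\cdot))\le\hat d_g$ and $\pi(\partial\hat M)\subseteq\mathcal{Z}$ rather than via curves.
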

\begin{proof}
By completeness of $(N,d_{\mathcal{S}})$, we have $\pi(\partial\hat{M}) \subseteq \mathcal{Z}$, thus if $p \in \partial \hat{M}$ \eqref{eq:eq2} is verified as both sides are zero. Then, assume $p \in M$. Using \eqref{eq:ineq1}, we obtain the following inequality,
\begin{align}
\delta_{\mathcal{S}}(\pi(p)) & = \inf\{d_\mathcal{S}(q,\pi(p)) \mid q \in \mathcal{Z}\} \\
& \leq \inf\{d_\mathcal{S}(\pi(\tilde{q}),\pi(p)) \mid \tilde{q} \in \partial\hat{M}\} \leq \inf\{\hat{d}_g(\tilde{q},p)\mid \tilde{q} \in \partial\hat{M}\} = \delta(p).
\end{align}

To conclude the proof, we show that $\delta_{\mathcal{S}}(\pi(p)) \geq \delta(p)$. Let $\gamma : [0,1] \to N$ be an admissible curve such that $\gamma(0)\in \mathcal{Z}$ and $\gamma(1) = \pi(p)$. Without loss of generality, we assume that $\gamma(0)$ is the only point in the curve that belongs to $\mathcal{Z}$ (otherwise we can cut and reparametrize the curve, obtaining a new one with smaller length and verifying the assumptions). Let $q = \gamma(0)$. The sequence $q_n = \gamma(1/n)$ is Cauchy in $(M,d_g)$, hence it corresponds to a unique $\tilde{q} \in \partial \hat{M}$ and $q_n \to \tilde{q}$ as elements of $\hat{M}$. Thus,
\begin{equation}\label{eq:92}
\ell(\gamma) = \lim_{n \to +\infty} \ell(\gamma|_{[1/n,1]}) \geq \lim_{n \to +\infty} d_g(q_n,p) =\hat{d}_g(\tilde{q},p) \geq \delta(p),
\end{equation}
where, in the first inequality, we used that the curve $\gamma|_{[1/n,1]}$ belongs to a unique connected component of $M$. By taking the inf over all such $\gamma$, we obtain $d_{\mathcal{S}}(\pi(p)) \geq \delta(p)$.
\end{proof}

\subsection{Smoothness of the almost-Riemannian distance from the singular set}

From now on, we consider the restriction of $\delta_{\mathcal{S}}$ to $M$ and, for this reason, we omit the map $\pi$. Thanks to Lemma~\ref{l:relation}, in order to verify assumption $(\star)$, it is sufficient to study the regularity properties of the almost-Riemannian distance from $\mathcal{Z}$. As a byproduct of the proof of Lemma~\ref{l:arssmoothness}, we build a local frame useful for the computation of the effective potential in the almost-Riemannian setting, given in Lemma~\ref{l:localframe}.

\begin{lemma}\label{l:arssmoothness}
Let $\mathcal{S}$ be an ARS on an $n$-dimensional manifold $N$. Assume that the singular set $\mathcal{Z}$ is a smooth, embedded and compact hypersurface, with no tangency points. Then there exists $\varepsilon>0$ such that $\delta_{\mathcal{S}} : M_{\varepsilon} \to \R$ is smooth, where $M_\varepsilon = \{ 0<\delta_\mathcal{S}\leq \varepsilon\}$.
\end{lemma}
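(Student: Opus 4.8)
The goal is to show that, near a compact singular hypersurface $\mathcal{Z}$ without tangency points, the almost-Riemannian distance $\delta_\mathcal{S}$ from $\mathcal{Z}$ is smooth on a punctured collar $M_\varepsilon$. The plan is to mimic the classical Riemannian argument: distance functions from a submanifold are smooth as long as one stays inside the injectivity radius, i.e.\ before the cut locus. The key geometric input that must be established first is that minimizing geodesics from $\mathcal{Z}$ to nearby points of $M$ are \emph{normal} geodesics hitting $\mathcal{Z}$ orthogonally, and that for $q\in\mathcal{Z}$ the direction of departure is the unique covector annihilating $T_q\mathcal{Z}$ (up to sign and normalization), whose existence is exactly guaranteed by the no-tangency assumption. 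Indeed, as recalled in the excerpt, any curve minimizing the length between $\mathcal{Z}$ and a point $p\in N\setminus\mathcal{Z}$ must be a normal geodesic (Lagrange multipliers, no abnormal curves enter since one endpoint is off $\mathcal{Z}$), and transversality of the endpoint constraint forces the initial covector $\lambda\in T_q^*N$ to kill $T_q\mathcal{Z}$; since $q$ is not a tangency point, $\lambda(\distr_q)\neq 0$, so $\lambda$ is (a multiple of) the unit conormal and in particular $\dot\gamma(0)=\sum_i\langle\lambda,X_i\rangle X_i \neq 0$, giving a genuine, non-degenerate initial velocity transverse to $\mathcal{Z}$.

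\textbf{Construction of the normal exponential map.} Next I would define a ``normal exponential map from $\mathcal{Z}$'': on the (trivial, after a choice of coorientation, or in any case a line sub-bundle $\nu^*\mathcal{Z}\subset T^*N|_\mathcal{Z}$) conormal bundle of $\mathcal{Z}$, send $(q,s)\mapsto \exp_q(s\,\lambda_q)$, where $\lambda_q$ is the conormal of $d_\mathcal{S}$-norm $1$ pointing into the component under consideration and $\exp_q$ is the almost-Riemannian exponential of the excerpt (which, by completeness and Hopf--Rinow for length spaces, is globally defined). This map is smooth because $\vec H$ is a smooth vector field on $T^*N$ and $\mathcal{Z}$ is a smooth compact hypersurface. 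At $s=0$ its differential is an isomorphism: the $T\mathcal{Z}$ directions map identically, and the $s$-derivative is $\dot\gamma(0)\neq 0$ and transverse to $T_q\mathcal{Z}$ by the no-tangency step. Hence by the inverse function theorem and compactness of $\mathcal{Z}$ there is $\varepsilon_0>0$ such that $(q,s)\mapsto \exp_q(s\lambda_q)$ is a diffeomorphism from $\mathcal{Z}\times(0,\varepsilon_0)$ onto a punctured one-sided collar of $\mathcal{Z}$ in $M$.

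\textbf{Identifying the collar coordinate with $\delta_\mathcal{S}$ and concluding.} It then remains to show that on this collar $\delta_\mathcal{S}(\exp_q(s\lambda_q))=s$ for $s$ small enough, uniformly in $q$. The inequality $\delta_\mathcal{S}\le s$ is immediate since the geodesic $t\mapsto\exp_q(t\lambda_q)$, $t\in[0,s]$, is an admissible curve of length $s$ from $\mathcal{Z}$ (using Lemma~\ref{l:speed} and $2H(\lambda_q)=1$). For the reverse inequality one uses a standard first-variation / Gauss-lemma-type argument: for a slightly smaller $\varepsilon<\varepsilon_0$ and any $p$ in the collar, any admissible curve from $\mathcal{Z}$ to $p$ either leaves the collar — in which case, by compactness, it already has length $\ge\varepsilon_0>\varepsilon\ge\delta_\mathcal{S}$-candidate — or stays in the collar, where in the coordinates $(q,s)$ the sub-Riemannian metric is bounded below by $ds^2$ along the $s$-fibres, so its length is at least the $s$-coordinate of $p$; taking the infimum gives $\delta_\mathcal{S}(p)\ge s(p)$. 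One should be slightly careful that a priori a near-minimizer might oscillate in and out of the collar, but the usual argument (cut the curve at the last time it enters the final thin collar; the remaining piece already costs more than $\varepsilon$) handles this, exactly as in \cite[Prop.~2.5.22]{BBI}-type reasoning already invoked in the excerpt. Once $\delta_\mathcal{S}=s$ in the collar coordinates, smoothness of $\delta_\mathcal{S}$ on $M_\varepsilon$ follows from smoothness of the collar diffeomorphism.

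\textbf{Main obstacle.} The delicate point is not the inverse function theorem step but the \emph{uniform} lower bound $\delta_\mathcal{S}\ge s$, i.e.\ ruling out shortcuts through $\mathcal{Z}$ itself or through the far region: one must know that the injectivity radius of the normal exponential map from $\mathcal{Z}$ is bounded below, and that no competitor curve can be shorter by dipping back to $\mathcal{Z}$ at another point and re-emerging. Compactness of $\mathcal{Z}$ is what makes this work, together with the fact (already used in the proof of Lemma~\ref{l:dist} and in \cite{BBI}) that sets uniformly separated from the metric boundary are compact, so minimizing sequences cannot escape to infinity or accumulate on $\mathcal{Z}$ away from the base point. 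I expect this is precisely where the authors invoke a reference such as \cite{Footesmooth} for the Riemannian analogue and adapt it; the adaptation is routine once the normal/transversality structure above is in place, which in turn hinges entirely on the no-tangency hypothesis.
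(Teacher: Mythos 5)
Your proposal follows essentially the same route as the paper's proof: build the normal exponential map on the annihilator (conormal) bundle of $\mathcal{Z}$, use the no-tangency hypothesis to get transversality and hence full rank at the zero section, combine the inverse function theorem with compactness of $\mathcal{Z}$ to obtain a uniform tubular neighborhood, and identify $\delta_{\mathcal{S}}$ with the fiber norm $\sqrt{2H(\lambda)}$ via the fact that minimizers from $\mathcal{Z}$ are normal geodesics with conormal initial covector. The paper handles the global injectivity and the reverse inequality $\delta_{\mathcal{S}}\geq\sqrt{2H(\lambda)}$ by an explicit triangle-inequality argument on the sets $U_{\varrho}(q)$ and by the existence of minimizers on compact sets, which is exactly the gap you correctly flag as the ``main obstacle'' and resolve in the same spirit.
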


\begin{lemma}\label{l:localframe}
Under the same assumptions of the previous lemma, for any $q \in \mathcal{Z}$ there exist a neighborhood $\mathcal{O} \subseteq N$ of $q$ and coordinates $\mathcal{O} \simeq (-\varepsilon,\varepsilon) \times \R^{n-1}$ such that $\delta_{\mathcal{S}}(t,x) = |t|$, and a local generating family of the form
\begin{equation}
X_1 = \partial_t, \qquad X_i = \sum_{j=2}^n a_{ij}(t,x) \partial_{x_j}, \qquad i=2,\ldots,n,
\end{equation}
for some smooth functions $a_{ij}(t,x)$, such that $\det(a_{ij})(t,x) = 0$ if and only if $t=0$.
\end{lemma}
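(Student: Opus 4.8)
The plan is to establish Lemma~\ref{l:localframe} (and simultaneously Lemma~\ref{l:arssmoothness}) by exploiting the \emph{no tangency points} hypothesis to reduce the computation of $\delta_{\mathcal{S}}$ near $\mathcal{Z}$ to a normal-exponential construction. First I would fix $q \in \mathcal{Z}$. Since $\mathcal{Z}$ is a smooth embedded hypersurface with no tangency points, for every $z \in \mathcal{Z}$ the subspace $\distr_z = \xi(E_z)$ is not contained in $T_z\mathcal{Z}$, so there exists a unique (up to sign) unit vector $n(z) \in \distr_z$ orthogonal to $T_z\mathcal{Z}$ with respect to $g$; by compactness of $\mathcal{Z}$ and smoothness of the structure, $z \mapsto n(z)$ is a smooth section over a neighborhood in $\mathcal{Z}$ of $q$. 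The key point, which I would prove first, is that for $z$ near $q$ the minimizing admissible curves realizing $\delta_{\mathcal{S}}$ from $\mathcal{Z}$ are exactly the normal geodesics leaving $\mathcal{Z}$ orthogonally with initial covector dual to $\pm n(z)$. This follows from the Lagrange multiplier argument recalled in the text: a curve minimizing length between $\mathcal{Z}$ and a point $p \in N\setminus\mathcal{Z}$ must be a normal geodesic (abnormals are irrelevant here since the target is off $\mathcal{Z}$), and the transversality condition at the $\mathcal{Z}$-endpoint forces the initial covector to annihilate $T_z\mathcal{Z}$; because $z$ is not a tangency point, such a covector is nonzero on $\distr_z$, so the corresponding geodesic actually moves off $\mathcal{Z}$ with unit speed in the direction $n(z)$.

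Next I would set up the coordinates. Using the above, define the map $\Psi(t,z) := \exp_z(t\,\lambda_z)$ where $\lambda_z \in T_z^*N$ is the covector with $\langle\lambda_z,\cdot\rangle$ annihilating $T_z\mathcal{Z}$, normalized so that $2H(\lambda_z)=1$ and pointing to one side of $\mathcal{Z}$; extend to negative $t$ using $-\lambda_z$ for the other side. By Lemma~\ref{l:speed}, $\Psi(t,z)$ is a unit-speed normal geodesic, hence $\delta_{\mathcal{S}}(\Psi(t,z)) = |t|$ for $|t|$ small (the geodesic stays minimizing up to the cut time, which is bounded below uniformly in $z$ by compactness of $\mathcal{Z}$). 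The differential of $\Psi$ at $t=0$ is invertible: along $\mathcal{Z}$ it is the identity on $T_z\mathcal{Z}$, and the $\partial_t$ direction maps to $n(z) \notin T_z\mathcal{Z}$. So by the inverse function theorem $\Psi$ is a smooth diffeomorphism from $(-\varepsilon,\varepsilon)\times(\text{nbhd of }q\text{ in }\mathcal{Z})$ onto a neighborhood $\mathcal{O}$ of $q$, giving coordinates $(t,x)$ with $x \in \R^{n-1}$ parametrizing $\mathcal{Z}$, in which $\delta_{\mathcal{S}}(t,x)=|t|$ and $\mathcal{Z}\cap\mathcal{O} = \{t=0\}$. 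This simultaneously proves Lemma~\ref{l:arssmoothness}: $\delta_{\mathcal{S}}$ is smooth on $M_\varepsilon = \{0<|t|\le\varepsilon\}$ (and globally on $M_\varepsilon$ by a covering/compactness argument patching these local statements, since $\delta_{\mathcal{S}}$ is the distance from the whole of $\mathcal{Z}$).

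For the frame, I would start from \emph{any} local generating family $Y_1,\dots,Y_n$ on $\mathcal{O}$ and modify it. By construction $\nabla\delta_{\mathcal{S}}$ is a well-defined smooth vector field on $M_\varepsilon$ (the gradient is smooth by the Lemma on the almost-Riemannian gradient), equal to $\partial_t$ in the coordinates since $\delta_{\mathcal{S}} = |t|$ is smooth there and the Eikonal equation $|\nabla\delta_{\mathcal{S}}|=1$ holds. One checks $\partial_t$ extends smoothly across $\{t=0\}$ as the orthonormal-to-$\mathcal{Z}$ field $n(x)$, which lies in $\distr$; so I can perform a Gram--Schmidt-type rotation within the span $\langle Y_1,\dots,Y_n\rangle$ (using the smooth scalar product on $E$, pulled down) to arrange that the first member of the generating family is $X_1 = \partial_t$ everywhere on $\mathcal{O}$ and the remaining $X_2,\dots,X_n$ are $g$-orthogonal to $\partial_t$ on the regular region. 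Being orthogonal to $\partial_t = \partial/\partial t$ means they have no $\partial_t$ component, i.e.\ $X_i = \sum_{j=2}^n a_{ij}(t,x)\,\partial_{x_j}$ with $a_{ij}$ smooth. Finally, the condition $\det(a_{ij})(t,x) = 0 \iff t = 0$ is exactly the statement that $\{X_1,\dots,X_n\}$ spans $T_pN$ precisely off $\mathcal{Z}$: since $X_1=\partial_t$ always contributes the transversal direction, the rank drops iff $\{X_2,\dots,X_n\}$ fails to span $T_x\mathcal{Z}$, which by definition of the singular set happens iff $p \in \mathcal{Z}$, i.e.\ $t=0$.

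The main obstacle I anticipate is the first step: rigorously proving that the distance $\delta_{\mathcal{S}}$ from $\mathcal{Z}$ is realized (near $q$, for small $|t|$) by these orthogonal normal geodesics and that they remain minimizing up to a uniform time $\varepsilon$. This requires (a) the transversality/Lagrange-multiplier argument at the non-tangency point to pin down the initial covector, (b) a uniform lower bound on the cut/focal time along $\mathcal{Z}$, which follows from compactness of $\mathcal{Z}$ together with continuity of the exponential map and the fact that the relevant initial data form a compact set, and (c) care that, even though $\mathcal{Z}$ may be on the boundary of several connected components of $M$, one works within a single component as in the proof of Lemma~\ref{l:relation}. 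Everything else — the inverse function theorem argument, the Gram--Schmidt rotation, and the determinant characterization — is routine once the geometric picture near $\mathcal{Z}$ is in place.
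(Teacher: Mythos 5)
Your proposal is correct and follows essentially the same route as the paper: the annihilator bundle of $\mathcal{Z}$ together with the no-tangency hypothesis gives a well-defined normal exponential map whose restriction is a diffeomorphism onto a tubular neighborhood (with the Lagrange-multiplier argument pinning the initial covector of minimizers to the annihilator, and compactness of $\mathcal{Z}$ giving a uniform $\varepsilon$), after which one identifies $\partial_t$ with the gradient of $\delta_{\mathcal{S}}$ and completes it to an orthonormal generating family whose remaining members are tangent to the level sets. The only cosmetic difference is that the paper phrases the whole construction in the cotangent bundle from the outset (since $g$ degenerates on $\mathcal{Z}$, only the covector $\lambda_z$, not the "unit normal vector", is intrinsically defined there) and proves $\nabla t=\partial_t$ by a Cauchy--Schwarz/length argument rather than by smooth extension across $\{t=0\}$.
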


\begin{proof}[Proof of Lemma~\ref{l:arssmoothness}]
This is the almost-Riemannian version of the tubular neighborhood theorem for $\mathcal{Z}$. Let $A\mathcal{Z}$ be the annihilator bundle of the singular set. That is,
\begin{equation}
A\mathcal{Z} := \{(q,\lambda) \in T^*N \mid \lambda(T_q \mathcal{Z}) = 0 \}.
\end{equation}
This is a rank 1 vector bundle with base $\mathcal{Z}$, and the map $i_0  : \mathcal{Z} \to A\mathcal{Z}$ such that $i_0(q) = (q,0)$ is an embedding of $\mathcal{Z}$ onto the zero section of $A\mathcal{Z}$ (see Figure~\ref{f:fig3}).

Let $0 \neq \lambda \in A_q\mathcal{Z}$. Since $q$ is not a tangency point, $\lambda(\distr_q) \neq 0$, hence $H(\lambda) >0$. In particular, $\lambda \in A_q\mathcal{Z}$ is associated, using $H|_q : T_q^*N \to T_q N$, with a non-zero vector $v_q \in \distr_q$ transverse to $T_q \mathcal{Z}$, and $A\mathcal{Z}$ plays the role of the ``normal bundle'' usually employed for the construction of the tubular neighborhood.

Let $D \subseteq T^*N$ be the set of $(q,\lambda)$ such that $\exp_q(\lambda)$ is well defined. Indeed, $D$ is open and so is $D \cap A\mathcal{Z}$. Consider the map $E : A \mathcal{Z} \cap D \to N$, given by
\begin{equation}
E(q,\lambda) := \exp_q(\lambda) = \pi\circ e^{\vec{H}}(\lambda).
\end{equation}
Clearly, $i_0(\mathcal{Z}) \subset D$, and $E\circ i_0 = id_{\mathcal{Z}}$. Moreover, $E$ has full rank on $i_0(\mathcal{Z})$. In fact,
\begin{equation}
E(q+\delta q,0) = q + \delta q, \qquad E(q,\delta\lambda) = \delta v \neq 0,
\end{equation}
where we used the fact that, for $\delta\lambda \in A \mathcal{Z}$, $H(\delta\lambda) > 0$.

Since $\dim(A\mathcal{Z}) = \dim(N)$, and by the inverse function theorem, $E$ is a diffeomorphism on a neighborhood of $(q,0) \in A \mathcal{Z}$ which can be taken of the form
\begin{equation}
U_{\varrho}(q)= \{(q',\lambda') \mid d_\mathcal{S}(q,q') < \varrho,\; \sqrt{2H(\lambda')} < \varrho\}, \qquad \varrho>0.
\end{equation}
Here, we used the fact that $\mathcal{Z}$ is embedded, and that $2H$, restricted to the fibers of $A\mathcal{Z}$, is a well defined norm. For any $q \in \mathcal{Z}$, let 
\begin{equation}
\varepsilon(q) := \sup \{ \varrho>0 \mid E : U_{\varrho}(q) \to E(U_{\varrho}(q)) \text{ is a diffeomorphism} \} > 0.
\end{equation}
The function $\varepsilon : \mathcal{Z} \to \R_+$ is continuous, that is,
\begin{equation}\label{eq:ineqs}
|\varepsilon(q)-\varepsilon(q')| \leq d_{\mathcal{S}}(q,q'), \qquad \forall q,q' \in \mathcal{Z}.
\end{equation}
To prove it, assume without loss of generality that $\varepsilon(q) \geq \varepsilon(q')$. If $d_\mathcal{S}(q,q') \geq \varepsilon(q)$, then \eqref{eq:ineqs} clearly holds. On the other hand, if $d_{\mathcal{S}}(q,q') < \varepsilon(q)$, one can check using the triangle inequality for $d_{\mathcal{S}}$, that $U_{\varrho}(q') \subseteq U_{\varepsilon(q)}(q)$ for $\varrho =\varepsilon(q) - d_{\mathcal{S}}(q,q')$. Hence \eqref{eq:ineqs} holds.
\begin{figure}
\includegraphics[scale=0.88]{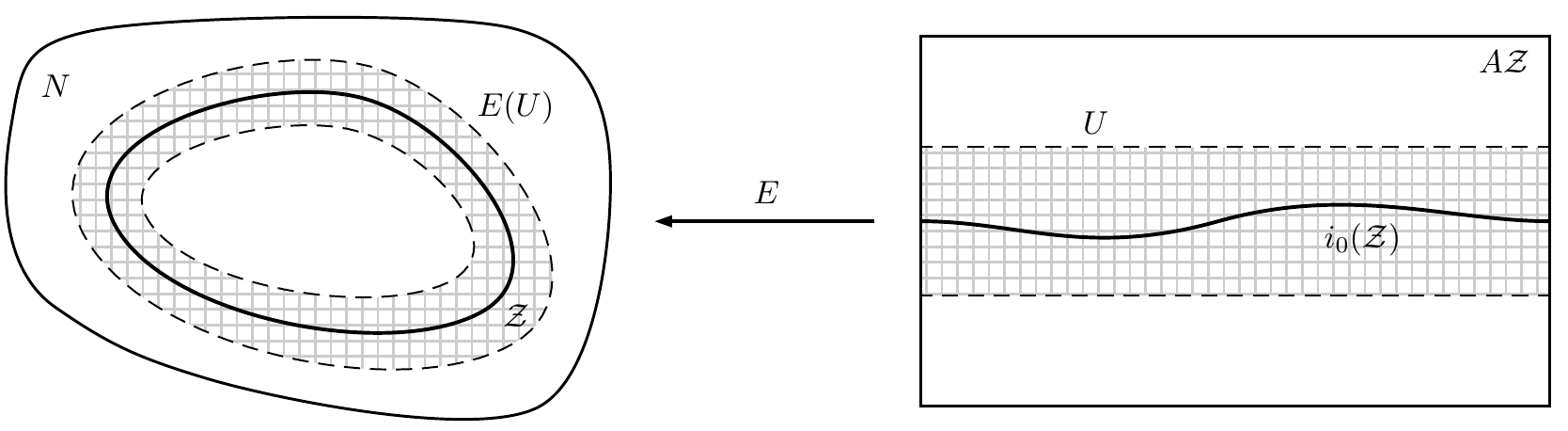}
\caption{Tubular neighborhood of $\mathcal{Z}$.}\label{f:fig3}
\end{figure}

Thanks to the compactness of $\mathcal{Z}$, we define the open neighborhood of $i_0(\mathcal{Z})$:
\begin{equation}
U:=\{(q,\lambda) \in A\mathcal{Z} \mid \sqrt{2H(\lambda)} < \varepsilon_0 \}, \qquad \varepsilon_0 := \min \{\varepsilon(q)/2 \mid q \in \mathcal{Z}\}>0.
\end{equation}
We claim that the restriction of $E$ to $U$ is injective. To prove it, let $(q_i,\lambda_i) \in U$, for $i=1,2$, with $p = E(q_i,\lambda_i)$. The normal geodesics $\gamma_i :[0,1] \to N$ defined by $\gamma_i(t) = E(q_i,t \lambda_i)$ have length $\ell(\gamma_i) = \sqrt{2H(\lambda_i)}$ by Lemma~\ref{l:speed}. Without loss of generality, we assume that $\varepsilon(q_1) \leq \varepsilon(q_2)$. By the triangle inequality,
\begin{equation}
d_{\mathcal{S}}(q_1,q_2) \leq d_{\mathcal{S}}(q_1, p) + d_{\mathcal{S}}(q_2,p) \leq \ell(\gamma_1) + \ell(\gamma_2) < 2\varepsilon_0 \leq \varepsilon(q_2).
\end{equation}
Hence, both $(q_i,\lambda_i) \in U_{\varepsilon(q_2)}(q_2)$. Since $E$ is injective on $U_{\varepsilon(q_2)}(q_2)$, then $(q_1,\lambda_1) = (q_2,\lambda_2)$, proving the claim. 

In particular, $E: U \to E(U)$ is a smooth diffeomorphism. Notice that, by construction, $E(U) \subseteq \{ \delta_{\mathcal{S}} < \varepsilon_0\}$. By compactness of $\mathcal{Z}$, and up to taking a smaller $\varepsilon_0$, we can assume that $E(U) \subseteq \{ \delta_{\mathcal{S}} < \varepsilon_0\} \subset K$, where $K$ is a compact set.

We will now prove that, in fact, $E(U) = \{ \delta_{\mathcal{S}} < \varepsilon_0\}$ and that, on $E(U)$, the almost-Riemannian distance from $\mathcal{Z}$ satisfies
\begin{equation}\label{eq:formuladistance}
\delta_{\mathcal{S}}(E(q,\lambda)) = \sqrt{2H(\lambda)}.
\end{equation}

To this purpose, let $p \in \{ \delta_{\mathcal{S}} < \varepsilon_0\} \subset K$. Since $K$ is compact, there exists at least one admissible curve $\gamma:[0,1] \to N$ minimizing the almost-Riemannian distance between $\mathcal{Z}$ and $p$. This must be a normal geodesic, that is $p=E(q,\lambda)$, with $q \in \mathcal{Z}$ and $\lambda \in T_{q}^*N$. Standard variation formulas show that, if there exists a direction $w \in T_{q} \mathcal{Z}$ with $\lambda_{q}(w) \neq 0$, then one can deform $\gamma$ in the direction of $w$, keeping its initial point in $\mathcal{Z}$, and decreasing its length. Since $\gamma$ is minimizing, this implies $\lambda(T_{q} \mathcal{Z}) = 0$, that is $(q,\lambda) \in A\mathcal{Z}$. Moreover, $\sqrt{2H(\lambda)} = \ell(\gamma) =  \delta_{\mathcal{S}}(p) < \varepsilon_0$. This implies that $(q,\lambda) \in U$, that is $p = E(q,\lambda) \in E(U)$, and $\delta_{\mathcal{S}}(E(q,\lambda)) = \sqrt{2H(\lambda)}$, as claimed.

Since $E$ maps the set $i_0(\mathcal{Z})=\{\lambda \in A\mathcal{Z} \mid 2H(\lambda) = 0\}$ onto $\mathcal{Z}$, \eqref{eq:formuladistance} together with the definition of $U$, imply that $\delta_{\mathcal{S}}$ is smooth on the set $\{0< \delta_{\mathcal{S}} \leq \varepsilon\}$, for all $\varepsilon < \varepsilon_0$.
\end{proof}

\begin{proof}[Proof of Lemma~\ref{l:localframe}]
In the proof of Lemma~\ref{l:arssmoothness}, we built a tubular neighborhood of $\mathcal{Z}$, that is a diffeomorphism $E: U \to E(U)$ from a neighborhood of the zero section
\begin{equation}
U  = \{ (q,\lambda) \in A \mathcal{Z} \mid \sqrt{2H(\lambda)} < \varepsilon \} \subset A \mathcal{Z},
\end{equation}
to a neighborhood $E(U)$ of $\mathcal{Z}$, such that $\delta_{\mathcal{S}}(E(q,\lambda)) = \sqrt{2H(\lambda)}$. In particular,
\begin{equation}
E(U)  = \{ p \in N  \mid \delta_{\mathcal{S}}(p) < \varepsilon \}.
\end{equation}
Let $W \subseteq \mathcal{Z}$ a coordinate neighborhood and $\eta: W \to A\mathcal{Z}$ be a smooth non-vanishing local section of $A\mathcal{Z}$, with $2H(\eta) = 1$. We identify $W \simeq \R^{n-1}$ with coordinates $x$. 

The map $(t,x) \mapsto E(x,t \eta(x))$ yields coordinates $(-\varepsilon, \varepsilon) \times \R^{n-1}$ on a neighborhood $\mathcal{O} \subseteq N$ of $W$. The curves $\tau \mapsto E(x,\tau\eta(x))= \exp_x(\tau\eta(x))$ are the unique normal geodesics with speed equal to $2H(\eta(x))=1$ that minimize the almost-Riemannian distance from $\mathcal{Z}$. Hence, in these coordinates, $|\partial_t|_{}=1$ and $\delta_{\mathcal{S}}(t,x) = \sqrt{2H(t\eta(x))} = |t|$. In particular, $\mathcal{O} \cap \mathcal{Z} = \{(0,x)\mid x \in \R^{n-1} \}$.

We claim that $\nabla t = \partial_t$. By Cauchy-Schwarz inequality, if $\nabla t$ is not parallel to $\partial_t$, then $1=|g(\partial_t,\nabla t)| < |\nabla t|_{}$ at some point $(t_0,x_0)$. Then, the unit-speed curve $\gamma(s) = e^{s \nabla t/|\nabla t|_{}}(t_0,x_0)$ satisfies
\begin{equation}
\delta_{\mathcal{S}}(\gamma(T)) - \delta_{\mathcal{S}}(\gamma(0)) = \int_0^T dt(\nabla t / |\nabla t|_{}) = \int_0^T |\nabla t |_{} >  T = \ell(\gamma|_{[0,T]}),
\end{equation}
leading to a contradiction, and implying the claim.

Let $X_1 := \partial_t$, $X_2,\ldots,X_n$ be a local generating family for the ARS on $\mathcal{O}$. Indeed, on the regular region $\mathcal{O} \setminus \mathcal{Z}$, they constitute a local orthonormal frame for $g$. In particular, for $i=2,\ldots,n$, we have $dt(X_i) = g(\nabla t, X_i) = g(X_1, X_i) = 0$. By continuity, this holds on the whole neighborhood $\mathcal{O}$, and $X_i = \sum_{j=2}^n a_{ij}(t,x) \partial_{x_j}$. Finally, by definition of $\mathcal{Z}$, we must have $\rank\{X_1,\ldots,X_n\}|_{(0,x)} < n$, that implies $\det(a_{ij})(0,x) =0$.
\end{proof}

\subsection{Essential self-adjointness for almost-Riemannian structures}

By Lemma~\ref{l:relation}, the distance from the metric boundary $\delta$ coincides with the almost-Rieman\-nian distance $\delta_\mathcal{S}$ from $\mathcal{Z}$. The latter, by Lemma~\ref{l:arssmoothness}, is smooth on a set of the form $M_{\varepsilon} = \{0<\delta_{\mathcal{S}} \leq \varepsilon \} \subset M$. Thus, hypothesis $(\star)$ is satisfied on any connected component of $(M,d_g)$, and we can exploit the self-adjointness criterion of Theorem~\ref{t:main}. We state this result as a separate Theorem for ARS.

\begin{theorem}[Quantum completeness criterion for ARS]\label{t:thars}
Let $\mathcal{S}$ be a complete ARS on an $n$-dimensional manifold $N$, equipped with a measure $\omega$, smooth on $N\setminus \mathcal{Z}$. Assume that the singular set $\mathcal{Z}$ is a smooth, embedded and compact hypersurface, with no tangency points. Assume that, for some $\varepsilon>0$, there exists a constant $\kappa\geq 0$ such that, letting $\delta = d_{\mathcal{S}}(\mathcal{Z}, \cdot\,)$, we have
\begin{equation}
\Veff = \left(\frac{\Delta_\omega\delta}{2}\right) + \left(\frac{\Delta_\omega\delta}{2}\right)^\prime \geq \frac{3}{4\delta^2} -\frac{\kappa}{\delta}, \qquad \text{for } 0<\delta \leq \varepsilon.
\end{equation}
Then $\Delta_\omega$ with domain $C^\infty_c(M)$ is essentially self-adjoint in $L^2(M)$, where $M = N \setminus \mathcal{Z}$, or any of its connected components. 

Moreover, when $M$ is relatively compact, the unique self-adjoint extension of $\Delta_\omega$ has compact resolvent. Therefore, its spectrum is discrete and consists of eigenvalues with finite multiplicity.
\end{theorem}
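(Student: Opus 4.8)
The plan is to obtain this theorem from Theorem~\ref{t:main}, applied to the Riemannian manifold $M=N\setminus\mathcal{Z}$ with the given measure $\omega$ and with $\pot\equiv 0$ (so that we may take $\nu\equiv 0$), once assumption $(\star)$ has been verified. First I would invoke Lemma~\ref{l:relation} to identify, on all of $M$, the distance $\delta$ from the metric boundary of $(M,d_g)$ with the almost-Riemannian distance $\delta_{\mathcal{S}}$ from $\mathcal{Z}$. Then Lemma~\ref{l:arssmoothness}, whose hypotheses ($\mathcal{Z}$ a smooth, embedded, compact hypersurface with no tangency points) are exactly the ones assumed here, supplies an $\varepsilon>0$ for which $\delta_{\mathcal{S}}$, hence $\delta$, is smooth---in particular $C^2$---on $M_\varepsilon=\{0<\delta\le\varepsilon\}$; this is precisely $(\star)$, and it holds verbatim on each connected component of $(M,d_g)$. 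Since $L^2(M)=\bigoplus_i L^2(M_i)$ over the connected components $M_i$, with $C^\infty_c(M)$ the algebraic direct sum of the $C^\infty_c(M_i)$ and $\Delta_\omega$ respecting this decomposition, essential self-adjointness on $C^\infty_c(M)$ is equivalent to essential self-adjointness on each $C^\infty_c(M_i)$; so it suffices to work on a single component.

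With $(\star)$ in hand, I would simply feed the hypothesis into Theorem~\ref{t:main}. Taking $\pot\equiv 0$, condition~\eqref{eq:hypo-2} is the trivial inequality $0\ge 0$, and condition~\eqref{eq:hypo-1} with $\nu\equiv 0$ reads $\Veff\ge\frac{3}{4\delta^2}-\frac{\kappa}{\delta}$ for $\delta\le\varepsilon$, which is exactly the assumption of the present statement. Theorem~\ref{t:main} then gives that $\Delta_\omega$ with domain $C^\infty_c(M)$ is essentially self-adjoint in $L^2(M)$, which is the first assertion.

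For the resolvent statement, I would show that relative compactness of $M$ in $N$ forces $\hat{M}$ to be compact, after which the final clause of Theorem~\ref{t:main} applies directly. Write $\hat{M}=\overline{M_\varepsilon}^{\,\hat M}\cup\overline{M\setminus M_\varepsilon}$. For the collar, recall from Lemma~\ref{l:dist} that the flow of $\nabla\delta$ gives a $C^1$-diffeomorphism $\phi:(0,\varepsilon]\times X_\varepsilon\to M_\varepsilon$, with $X_\varepsilon=\{\delta=\varepsilon\}$; moreover $X_\varepsilon$ is compact, being the continuous image under the tubular-neighborhood map $E$ of Lemma~\ref{l:arssmoothness} of the set $\{(q,\lambda)\in A\mathcal{Z}\mid 2H(\lambda)=\varepsilon^2\}$, a double cover of the compact manifold $\mathcal{Z}$. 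Since $d_g(\phi(t,x),\phi(t',x))\le|t-t'|$ (the $\nabla\delta$-geodesic realizes this length) and the slice metrics are locally controlled, $\phi$ extends to a continuous surjection $[0,\varepsilon]\times X_\varepsilon\to\overline{M_\varepsilon}^{\,\hat M}$, so $\overline{M_\varepsilon}^{\,\hat M}$ is compact. For the bulk, $M\setminus M_\varepsilon$ is uniformly separated from $\partial\hat{M}$ by $\varepsilon$ and, $M$ being relatively compact, is bounded in $N$; hence the argument of \cite[Prop. 2.5.22]{BBI}---already used in the proof of Proposition~\ref{p:Agmon}---shows that $\overline{M\setminus M_\varepsilon}$ is compact and contained in $M$. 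Therefore $\hat{M}$ is compact, Theorem~\ref{t:main} yields compactness of the resolvent, and the spectral conclusions follow.

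I do not expect a serious obstacle: the analytically delicate parts have all been discharged upstream, in Lemma~\ref{l:relation} (valid even at tangency points) and in Lemma~\ref{l:arssmoothness} (the almost-Riemannian tubular neighborhood theorem, where compactness of $\mathcal{Z}$ produces a uniform $\varepsilon$). The only mild care needed is bookkeeping---verifying that ``no tangency points'' is exactly the input Lemma~\ref{l:arssmoothness} requires, and that relative compactness of $M$ in $N$ is turned into compactness of the metric completion $\hat{M}$ rather than merely of the $N$-closure $\overline{M}$, which is what the collar-plus-bulk decomposition is arranged to secure.
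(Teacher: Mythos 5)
Your proposal is correct and follows essentially the same route as the paper: Lemma~\ref{l:relation} identifies $\delta$ with $\delta_{\mathcal{S}}$, Lemma~\ref{l:arssmoothness} gives smoothness of $\delta_{\mathcal{S}}$ near $\mathcal{Z}$ and hence $(\star)$, and Theorem~\ref{t:main} with $V\equiv 0$, $\nu\equiv 0$ concludes. Your collar-plus-bulk argument showing that relative compactness of $M$ in $N$ implies compactness of $\hat{M}$ is a welcome addition, since the paper passes over this point in silence.
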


On the Riemannian region $N \setminus \mathcal{Z}$, it is natural to consider the Laplace-Beltrami operator $\Delta = \Delta_{\vol_g}$ with domain $C^\infty_c(N\setminus \mathcal{Z})$. The standing conjecture is that $\Delta$ is essentially self-adjoint \cite{BL-LaplaceBeltrami}, at least when $\mathcal{Z}$ is a compact embedded hypersurface with no tangency points. Hence in the following we fix $\omega = \vol_g$, that is $\Delta_{\omega} = \Delta$. Using Theorem~\ref{t:thars}, we prove this conjecture under a mild regularity assumption.

\subsection{Regular ARS}

Let $(E,\xi,\cdot)$ an ARS on a smooth manifold $N$. Let $X_1,\ldots,X_n\in \Gamma(TN)$ be a local generating family, defined on $\mathcal{O} \subset N$. The singular set $\mathcal{Z} \cap \mathcal{O}$ can be characterized as the zero locus of the smooth map $\det\xi|_\mathcal{O} : \mathcal{O} \to \R$:
\begin{equation}
\det\xi|_\mathcal{O}(q) = \det(X_1,\ldots,X_n)(q).
\end{equation}
This characterization does not depend on the choice of the local family.

\begin{definition}\label{d:regularARS}
We say that a complete almost-Riemannian structure $\mathcal{S}=(E,\xi,\cdot)$ on a smooth manifold $N$ is \emph{regular} if 
\begin{itemize}
\item[$(i)$] there exists $k \in \mathbb{N}$ such that, for all $q \in \mathcal{Z}$ there exists a neighborhood $\mathcal{O}$ of $q$ and a smooth submersion $\psi : \mathcal{O} \to \R$ such that $\det\xi|_\mathcal{O} = \pm \psi^k$; 
\item[$(ii)$] the singular set $\mathcal{Z}$, which is a smooth embedded hypersurface, contains no tangency points.
\end{itemize}
\end{definition}
\begin{rmk}
We stress that the regularity of an ARS is a local property of the morphism $\xi : E \to TN$, and does not depend on the choice of the local family $X_1,\ldots,X_n$ or the scalar product $\cdot$ on the fibers of $E$. In particular, for a different choice, $(i)$ is still satisfied, up to multiplying $\psi$ by a non-vanishing smooth function. 
\end{rmk}
\begin{rmk}
Property $(i)$ is equivalent to the condition that $\mathcal{Z}$ is a smooth, embedded hypersurface and, for $q \in \mathcal{Z}$, the order of $\det\xi$ at $q$ is equal to $k$ (that is, $\det\xi|_{\mathcal{O}} = O(|z|^k)$ for some, and thus any, set of local coordinates centered in $q$).
\end{rmk}
\begin{rmk}
The two conditions are independent. Consider an ARS structure on $\R^2$ given by $E= T\R^2$, where $\xi: T\R^2 \to T\R^2$ is defined by $\xi (\partial_x) = \partial_x$ and $\xi(\partial_y) = f(x,y)\partial_y$. Indeed, $\mathcal{Z}= \{ f(x,y) = 0\}$ and, using the standard Euclidean structure on $E$, we have $\det\xi = f(x,y)$. If $f(x,y) = y-x^2$, then the ARS satisfies $(i)$, but not $(ii)$, since the origin of $\R^2$ is a tangency point. On the other hand, the structure given by the choice $f(x,y) = x(x^2 + y^2)$ satisfies $(ii)$ but not $(i)$.
\end{rmk}

\begin{theorem}[Quantum completeness of regular ARS]\label{t:regularARS}
Consider a regular almost-Rieman\-nian structure on a smooth manifold $N$ with compact singular region $\mathcal{Z}$. Then, the Laplace-Beltrami operator $\Delta$ with domain $C^\infty_c(M)$ is essentially self-adjoint in $L^2(M)$, where $M = N \setminus \mathcal{Z}$ or one of its connected components. Moreover, when $M$ is relatively compact, the unique self-adjoint extension of $\Delta$ has compact resolvent.
\end{theorem}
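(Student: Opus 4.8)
The plan is to reduce everything to the quantum completeness criterion for ARS, Theorem~\ref{t:thars}: once its hypotheses are checked for $\omega=\vol_g$, that theorem delivers both the essential self-adjointness of $\Delta$ on $C^\infty_c(M)$ (on $M=N\setminus\mathcal{Z}$ or any connected component) and the compactness of the resolvent when $M$ is relatively compact. A regular ARS is, by Definition~\ref{d:regularARS}, complete, and its singular set $\mathcal{Z}$ is a compact embedded hypersurface with no tangency points (condition $(ii)$); hence Lemma~\ref{l:arssmoothness} gives $\varepsilon>0$ such that $\delta_{\mathcal{S}}$ is smooth on $M_\varepsilon=\{0<\delta_{\mathcal{S}}\le\varepsilon\}$, and Lemma~\ref{l:relation} identifies $\delta_{\mathcal{S}}$ with the distance $\delta$ from the metric boundary, so that $(\star)$ holds on $M$. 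If $\mathcal{Z}=\emptyset$ there is nothing to prove, since then $M=N$ is a complete Riemannian manifold; so we assume $\mathcal{Z}\ne\emptyset$, which forces $k\ge 1$ in Definition~\ref{d:regularARS}. Thus the only point left to verify is the lower bound $\Veff\ge\tfrac{3}{4\delta^2}-\tfrac{\kappa}{\delta}$ near $\mathcal{Z}$, and this is where regularity condition $(i)$ enters.

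To establish that bound, I would work in the adapted coordinates of Lemma~\ref{l:localframe}: around any $q\in\mathcal{Z}$ there are coordinates $\mathcal{O}\simeq(-\varepsilon,\varepsilon)\times\R^{n-1}$ with $\delta_{\mathcal{S}}(t,x)=|t|$, $\mathcal{O}\cap\mathcal{Z}=\{t=0\}$, and a local generating family $X_1=\partial_t$, $X_i=\sum_{j=2}^n a_{ij}(t,x)\partial_{x_j}$ for $i\ge 2$, with $\det(a_{ij})$ vanishing exactly on $\{t=0\}$. Computing $\vol_g$ from this frame, which is orthonormal on $\mathcal{O}\setminus\mathcal{Z}$, gives $d\vol_g=|\det(a_{ij})(t,x)|^{-1}\,dt\,dx$; and since this generating family also satisfies condition $(i)$, one gets $\det(a_{ij})(t,x)=\pm t^k\,v(t,x)^k$ for a smooth, nowhere vanishing $v$ (writing the submersion defining $\{t=0\}$ as $t$ times a nonvanishing factor). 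Hence on $\mathcal{O}\cap\{t>0\}$,
\begin{equation*}
d\omega=d\vol_g=t^{-k}\,e^{2\phi(t,x)}\,dt\,dx,\qquad \phi:=-\tfrac{k}{2}\log|v|,
\end{equation*}
with $\phi$ extending smoothly up to $\{t=0\}$. Patching these local expressions against a fixed smooth reference measure $d\mu$ on $X_\varepsilon$ (the identification of Lemma~\ref{l:dist} being now a smooth diffeomorphism) yields, as in Proposition~\ref{l:pot}, a global function $\vartheta$ with $2\vartheta(t,x)=-k\log t+\phi(t,x)$, $\phi$ smooth in $t$ up to $t=0$, and — by compactness of $\mathcal{Z}$ — with all its $t$-derivatives bounded on $(0,\varepsilon]\times X_\varepsilon$.

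From the formula $\Veff=(\partial_t\vartheta)^2+\partial_t^2\vartheta$ of Proposition~\ref{l:pot} one then obtains
\begin{equation*}
\Veff=\frac{k^2+2k}{4t^2}-\frac{k}{t}\partial_t\phi+(\partial_t\phi)^2+\partial_t^2\phi\ \ge\ \frac{k(k+2)}{4\delta^2}-\frac{\kappa}{\delta},\qquad \delta\le\varepsilon,
\end{equation*}
for a suitable $\kappa\ge 0$, using the boundedness of the derivatives of $\phi$. Since $k\ge 1$ we have $k(k+2)\ge 3$, so $\Veff\ge\tfrac{3}{4\delta^2}-\tfrac{\kappa}{\delta}$ near the metric boundary, which is exactly the hypothesis of Theorem~\ref{t:thars}; equivalently, the measure has the form \eqref{eq:degorder-nonconst} with $a\equiv -k\le -1$ and \eqref{eq:phi} holds, so Theorem~\ref{t:nonconst-ord} applies verbatim. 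Either way, this finishes the proof, the compact-resolvent statement being the corresponding part of Theorem~\ref{t:thars}. The step I expect to be the main obstacle is the passage from the regularity condition $\det(a_{ij})=\pm(\text{submersion})^k$ to the clean factorization $\det(a_{ij})=\pm t^k v^k$ with $v$ smooth and nowhere vanishing — one must use that the coordinates of Lemma~\ref{l:localframe} have $\mathcal{Z}=\{t=0\}$, that condition $(i)$ is family-independent up to a nonvanishing factor (the remark after Definition~\ref{d:regularARS}), and a Hadamard-type division — together with keeping the ensuing bounds on $\phi$ uniform over the compact $\mathcal{Z}$ when assembling the local pieces into the global measure of Proposition~\ref{l:pot}.
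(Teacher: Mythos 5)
Your proposal is correct and follows essentially the same route as the paper: reduce to Theorem~\ref{t:thars}, pass to the adapted coordinates of Lemma~\ref{l:localframe}, use regularity condition $(i)$ together with the submersion property to factor $\det(a_{ij})=\pm t^k v^k$ with $v$ nonvanishing, compute $\Veff=\tfrac{k(k+2)}{4t^2}+O(1/t)\ge\tfrac{3}{4\delta^2}-\tfrac{\kappa}{\delta}$, and make the constant uniform by compactness of $\mathcal{Z}$. The only differences are cosmetic (you make explicit the trivial case $\mathcal{Z}=\emptyset$ and the fact that $k\ge 1$, and you note the alternative routing through Theorem~\ref{t:nonconst-ord} with $a\equiv-k$; also, a harmless typo: it should read $\vartheta=-\tfrac{k}{2}\log t+\phi$, as your subsequent formula for $\Veff$ correctly assumes).
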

\begin{proof}
By Lemma \ref{l:localframe}, for any point $q \in \mathcal{Z}$, there exists a neighborhood $\mathcal{O} \subseteq N$ and coordinates $\mathcal{O} \simeq (-\varepsilon,\varepsilon) \times \R^{n-1}$ such that the almost-Riemannian distance from $\mathcal{Z}$ is $\mathcal{\delta}_\mathcal{S}(t,x) = |t|$ and a local generating family of the form
\begin{equation}
X_1 = \partial_t, \qquad X_i = \sum_{j=2}^n a_{ij}(t,x) \partial_{x_j}, \qquad i=2,\ldots,n,
\end{equation}
for smooth functions $a_{ij}(t,x)$, such that $\mathcal{Z} \cap \mathcal{O} = \{ \det(a_{ij})(t,x) = 0 \} = \{(0,x)\mid x \in \R^{n-1}\}$.  Letting $a(t,x) = \det(a_{ij})(t,x)$, and thanks to the regularity assumption, we have
\begin{equation}
a(t,x) = \det(X_1,\ldots,X_n)(t,x) = \pm \psi(t,x)^k,
\end{equation}
where $\psi$ is a smooth submersion. In particular, since $\psi(0,x) =0$, we must have $\partial_t \psi(0,x) \neq 0$. Hence, $\psi(t,x) = t \phi(t,x)$, where $\phi(t,x)$ is some smooth never vanishing function. Then,
\begin{equation}
\vol_g = \sqrt{|g|} dt\, dx= \frac{dt\, dx}{|a(t,x) |} = \frac{dt\, dx}{|t|^k |\phi(t,x)|^k}.
\end{equation}
A straightforward computation using the definition of effective potential yields
\begin{align}
\Veff|_{\mathcal{O} \setminus \mathcal{Z}}  &=  \left(\frac{\Delta |t|}{2}\right)^2 + \partial_t\left(\frac{\Delta |t|}{2}\right) \\
& = \frac{k(k+2)}{4t^2} + \frac{k^2}{2|t|}\frac{\partial_t \phi(t,x)}{\phi(t,x)} + \frac{k(k+2)}{4}\frac{\partial_t \phi(t,x)^2}{\phi(t,x)^2} - \frac{k}{2}\frac{\partial_t^2 \phi(t,x)}{\phi(t,x)}.
\end{align}
Up to restricting to a smaller, compact subset $\mathcal{O}' \simeq [-\varepsilon',\varepsilon'] \times [-1,1]^{n-1}$,
we obtain that $\Veff|_{\mathcal{O'} \setminus \mathcal{Z}} \geq 3/4t^2 - \kappa'/|t|$, for some constant $\kappa'$. By compactness of $\mathcal{Z}$, up to choosing a sufficiently small $\eta$, the set $M_\eta = \{0<\delta \leq \eta \}$ can be covered with a finite number of these coordinate neighborhoods $\mathcal{O}'$, and we obtain the global estimate
\begin{equation}
\Veff \geq \frac{3}{4\delta^2} - \frac{\kappa}{\delta}, \qquad \text{for } 0< \delta\leq \eta.
\end{equation}
We conclude by Theorem~\ref{t:thars}.
\end{proof}

\subsection{Non-regular ARS} \label{s:non-regular}

The conjecture of \cite{BL-LaplaceBeltrami} remains open for general non-regular ARS. We discuss here cases in which $\mathcal{Z}$ is still a compact embedded hypersurface with no tangency points, but the ARS is not regular. 

An important object associated with the singularity of the structure at $\mathcal{Z}$ is the \emph{growth vector} \cite{FJ-book}, which we now define. Consider the sequence of subspaces $\distr_q=\distr_q^1 \subseteq \distr_q^2 \subseteq \ldots \subseteq T_q N$ given by
\begin{equation}
\distr^{i+1}_q : = [\Gamma(\distr^i),\Gamma(\distr)]_q, \qquad i \geq 1.
\end{equation}
Here, with the symbol $\distr^i \subset TN$, we denote the (rank-varying) smooth sub-bundle of $TN$ whose fibers are $\distr^{i+1}_q$, for all $i \geq 1$. By the Lie bracket generating assumption, for any $q \in N$, there exists $m(q)$ such that $\distr^{m(q)}_q = T_q N$.

\begin{definition}\label{d:growth}
Let $k_i(q) : = \dim \distr^i_q$. The \emph{growth vector} at $q$ is the finite sequence
\begin{equation}
\mathcal{G}_q = (k_1(q), \ldots,k_{m(q)}(q)).
\end{equation}
\end{definition}

At \emph{regular} (i.e.\ not singular) points, we have $\mathcal{G}_q = (n)$, that is $m(q) = 1$. The function $q \mapsto m(q)$ is upper semicontinuous with values in a discrete set. Hence, if $\mathcal{Z}$ is compact, then $m(q)$ is bounded. On the other hand, the functions $q \mapsto k_i(q)$ are lower semicontinuous with values in a discrete set. This implies that the set of points $q \in \mathcal{Z}$ such that $\mathcal{G}_q$ is locally constant is open and dense in $\mathcal{Z}$. 

Non-regular ARS can occur both with constant or non-constant growth vector on $\mathcal{Z}$, a feature which was believed to play a role in the problem of essential self-adjointness. For what concerns our theory, there are examples where we can apply Theorem~\ref{t:thars} and examples where we cannot.
\begin{example}[Non-regular ARS, non-constant growth vector]\label{ex:first}
Let $N= \R_t \times \mathbb{T}^{n-1}_x$, with $n \geq 2$. Here $x$ denotes  the coordinate on the torus $\mathbb{T}^{n-1}$ and $t \in \R$.  Let $\ell \in \mathbb{N}$. Consider the ARS given by the local generating family
\begin{equation}
X_0 = \partial_t, \qquad X_i =  t(t^{2\ell}+ f(x))\partial_{x_i}, \qquad i=1,\ldots,n-1,
\end{equation}
for some smooth function $f \geq 0$, attaining the value zero on a proper, non-empty subset. The almost-Riemannian distance from the singular set is $\delta_{\mathcal{S}}(t,x) = |t|$ and, by Lemma~\ref{l:relation}, coincides with the distance from the metric boundary of the Riemannian structure induced on $M = N \setminus \mathcal{Z}$. The restriction of the growth vector of this ARS structure to $\mathcal{Z}$ is 
\begin{equation}
\mathcal{G}_{(0,x)} = \begin{cases}
(1,\ldots,1,n) & f(x) =0, \\
(1,n)  & f(x) \neq 0,
\end{cases}
\end{equation}
where, in the first line, there are $2\ell + 1$ repeated ones. In particular, we have
\begin{equation}
\det\xi = \det(X_0,\ldots,X_{n-1}) = [t(t^{2\ell}+f(x))]^{n-1},
\end{equation}
and straightforward computations show that $\det\xi$ cannot be written as the power of a submersion, hence the ARS is not regular. For $\vol_g$, we obtain
\begin{equation}
\vol_g =  \sqrt{|g|}\, dt\, dx= \frac{dt\, dx}{[|t|(t^{2\ell}+f(x))]^{n-1}} .
\end{equation}
In particular, the order of $\vol_g$ is not constant close to $\mathcal{Z}$:
\begin{equation}
\vol_g = O(|t|^{a(x)})\, dt \, dx, \qquad a(x) = -\begin{cases}
(n-1)(2\ell +1)  & f(x)=0, \\
(n-1) &  f(x) \neq 0.
\end{cases}
\end{equation}
We are in the case of Example~\ref{ex:non-constant}, and
\begin{equation}
\Veff = \frac{a(x)(a(x)-2)}{4t^2} + R(t,x) \geq \frac{3}{4t^2} + R(t,x),
\end{equation}
where $R(t,x)$ is a remainder term of the form
\begin{equation}
R(t,x) = \begin{cases}
0 &f(x) =0, \\
\frac{\ell(n-1)t^{2\ell-2}[(\ell(n-1)+n)t^{2\ell} + (n-2\ell)f(x)]}{[t^{2\ell} + f(x)]^2} & f(x) \neq 0. 
\end{cases}
\end{equation}
The behavior of $\Veff$ depends in a crucial way on the parameters.
\begin{enumerate}
\item If $\ell \leq n/2$, then $R(t,x) \geq 0$. In particular $\Veff \geq 3/4t^2$, and the Laplace-Beltrami is essentially self-adjoint, for any value of $n$, thanks to Theorem~\ref{t:thars}.
\item If $\ell > n/2$, then along any sequence $(t_i,x_i)$ such that $t_i=1/i$ and $f(x_i) = 1/i^{2\ell}$, we have $t_i R(t_i,x_i) \to -\infty$. Hence we cannot apply Theorem~\ref{t:thars}. We do not know whether the Laplace-Beltrami operator in the Riemannian region is essentially self-adjoint or not, even in the simple case $n=2$ and $\ell=2$.
\end{enumerate}
\end{example}

In the particular case of $2$-dimensional ARS, the low dimension implies the following equivalence: the ARS is regular if and only if the growth vector $\mathcal{G}$ is constant on $\mathcal{Z}$. However, in general, the following examples show that these two conditions are independent.

\begin{example}[Non-regular ARS, constant growth vector]\label{ex:second}
Let $N= \R_t \times \mathbb{T}_x^{n-1} \times \mathbb{T}^{n-1}_y$, with $n \geq 2$. Let $\ell \in \mathbb{N}$, $\ell \geq1$. Consider the ARS given by the local generating family
\begin{equation}
X_0 = \partial_t, \qquad X_i =  t(t^{2\ell}+ f(x))\partial_{x_i}, \qquad Y_i = \partial_{y_i} + t \partial_{x_i}, \qquad i=1,\ldots,n-1,
\end{equation}
for some smooth function $f \geq 0$, attaining the value zero on a proper, non-empty subset. The restriction of the growth vector to $\mathcal{Z}$ is constant:
\begin{equation}
\mathcal{G}_{q} = (n,2n-1), \qquad \forall q \in \mathcal{Z}.
\end{equation}
With computations similar to Example~\ref{ex:first}, we obtain the same effective potential replacing $n-1 \mapsto 2(n-1)$. Thus we can apply Theorem~\ref{t:thars} if and only if $\ell \leq (2n-1)/2$.
\end{example}

\begin{example}[Regular ARS, non-constant growth vector]\label{ex:third}
Let $N= \R_t \times \mathbb{T}_{x,y,z}^{3}$. Consider the ARS given by the local generating family
\begin{equation}
X_1 = \partial_t, \qquad X_2 =  \partial_x, \qquad X_3 = \partial_y + x^2 \partial_z, \qquad X_4 = t^2 \partial_z.
\end{equation}
Observe that $\mathcal{Z} = \{0\} \times \mathbb{T}^3$. We have the following Lie brackets,
\begin{equation}
[X_2,X_3] = 2x \partial_z, \qquad [X_2,[X_2,X_3]]]  = 2\partial_z.
\end{equation}
Thus, the growth vector on $\mathcal{Z}$ is non-constant:
\begin{equation}
\mathcal{G}_{(0,x,y,z)} = \begin{cases}
(3,4) & x \neq 0, \\
(3,3,4) & x = 0.
\end{cases}
\end{equation}
On the other hand, the ARS structure is regular, since
\begin{equation}
\det(X_1,X_2,X_3,X_4) = t^2.
\end{equation}
In particular, by Theorem~\ref{t:regularARS}, the Laplace-Beltrami operator is essentially self-adjoint.
\end{example}
In light of the above examples, we do not expect the regularity of the growth vector on $\mathcal{Z}$ to play any role in the essential self-adjointness of $\Delta$, at least as far as the sufficient condition of Theorem~\ref{t:thars} is concerned. 

\subsection*{Acknowledgments}
\thanks{This research has  been supported by the European Research Council, ERC StG 2009 ``GeCoMethods'', contract n. 239748, and by the RIP program of the Institut Henri Poincar\'e, during which this project begun. The first and second authors were partially supported by the Grant ANR-15-CE40-0018 of the ANR, by the iCODE institute (research project of the Idex Paris-Saclay), and by the SMAI project ``BOUM''. The first author was also partly supported by the ANR grant NS-LBR. ANR-13-JS01-0003-01. This research benefited from the support of the ``FMJH Program Gaspard Monge in optimization and operation research'' and from the support to this program from EDF.}

\bibliographystyle{abbrv}
\bibliography{ARSelfAdjointness}

\end{document}